\newcommand{\R}{\mathbb{R}}
\newcommand{\N}{\mathbb{N}}
\newcommand{\beq}{\begin{equation}}
\newcommand{\eeq}{\end{equation}}
\newcommand{\bp}{\begin{proof}}
\newcommand{\ep}{\end{proof}}
\newcommand{\bo}{\begin{proposition}}
\newcommand{\eo}{\end{proposition}}
\newcommand{\bl}{\begin{lemma}}
\newcommand{\el}{\end{lemma}}
\newtheorem{theorem}{Theorem}[section]
\newtheorem{corollary}{Corollary}
\newtheorem{lemma}[theorem]{Lemma}
\newtheorem{proposition}{Proposition}
\theoremstyle{definition}
\newtheorem{remark}{Remark}
\begin{document}
\title{\LARGE\bf{Uniqueness and nondegeneracy of ground states for the Schr\"{o}dinger-Newton equation with
power nonlinearity}$\thanks{{\small This work was partially supported by NSFC(11901532).}}$ }
\date{}
 \author{ Huxiao Luo$\thanks{{\small Corresponding author. E-mail: luohuxiao@zjnu.edu.cn (H. Luo).}}$\\
\small Department of Mathematics, Zhejiang Normal University, Jinhua, Zhejiang, 321004, P. R. China
}
\maketitle
\begin{center}
\begin{minipage}{13cm}
\par
\small  {\bf Abstract:} In this article, we study the Schr\"{o}dinger-Newton equation
\begin{equation}\label{20230913-1}
-\Delta u+\lambda u=\frac{1}{4\pi}\left(\frac{1}{|x|}\star u^{2}\right)u+|u|^{q-2}u \quad \text{in}~\R^3,
\end{equation}
where $\lambda\in\R_+$, $q\in (2,3)\cup(3, 6)$. By investigating limit profiles of ground states as $\lambda\to0^+$ or $\lambda\to+\infty$, we prove the uniqueness of ground states. By the action of the linearized eqaution with respect to decomposition into spherical harmonics, we obtain the nondegeneracy of ground states.
 \vskip2mm
 \par
 {\bf Keywords:} Schr\"{o}dinger-Newton equation; Uniqueness and nondegeneracy of ground states; Spherical harmonics.
 \vskip2mm
 \par
 {\bf MSC(2010): } 35A02, 35B20, 35J15, 35J20, 35J61

\end{minipage}
\end{center}

 {\section{Introduction}}
 \setcounter{equation}{0}
Consider the time dependent Schr\"{o}dinger equation with combined nonlinearities
\begin{equation}\label{t1.1.0}
\left\{
\begin{array}{ll}
\aligned
&i\frac{\partial\psi}{\partial t}+\Delta \psi+v(x)\psi+\mu|\psi|^{q-2}\psi=0,\quad t\in\R,~ x\in\R^3,\\
&-\Delta v=|\psi|^2, \\
&\psi(0,x)=\psi_0(x),
\endaligned
\end{array}
\right.
\end{equation}
where $\mu\in\R$ is a parameter, $\psi$ is the wave function, the local term $|\psi|^{q-2}\psi$ arises from the effects of
the short-range self-interaction between particles, the nonlocal term $v: \R^3 \to \R$ represents
the Newtonian gravitational attraction between particles. (\ref{t1.1.0}), also known as Gross-Pitaevskii-Poisson equation, is used to describe the dynamics of the Cold Dark Matter in the form of the Bose-Einstein Condensate \cite{4,8,39,45}.

For the non-interacting case $\mu = 0$, (\ref{t1.1.0}) is  usually called the Hartree or Schr\"{o}dinger-Newton equation, which is used to describe the quantum mechanics of a polaron at rest \cite{Pekar}. In 1976, Choquard used the Hartree equation to describe an electron trapped in its own hole in a certain approximating to Hartree-Fock theory of one component plasma, see e.g. E. H. Lieb \cite{Lieb1976/77}. Thus, the Hartree equation is also called Choquard equation. The general Choquard equation is mathematically well-studied by V. Moroz and J. Van Schaftingen in \cite{MR3056699,MR3356947,MR3625092}.

Let $\psi(t, x) = e^{i\lambda t}u(x)$ be the standing wave for problem (\ref{t1.1.0}) with the focusing case $\mu = 1$, where $\lambda\in\R$ is the frequency. Then $u(x)$ solves the following elliptic system
\begin{equation}\label{eq:20220603-1}
\left\{
\begin{array}{ll}
\aligned
&-\Delta u+\lambda u=vu+|u|^{q-2}u \quad \text{in}~\R^3, \\
&-\Delta v=u^2.
\endaligned \tag{Q}
\end{array}
\right.
\end{equation}
As usual, we consider the solutions of system \eqref{eq:20220603-1} in Sobolev space $H^1(\R^3)\times \dot{H}^1(\R^3)$. Since $-\Delta v = u^2$ has a unique positive solution $v=I_2\star u^2$, \eqref{eq:20220603-1} is equivalent to the following elliptic equation
\begin{equation}\label{eq:20220420-e1}
-\Delta u+\lambda u=\left(I_2\star u^{2}\right)u+|u|^{q-2}u \quad \text{in}~\R^3, \tag{P}
\end{equation}
where $I_2(\cdot):=\frac{1}{4\pi}\frac{1}{|\cdot|}$ is the Green function of the Laplacian $-\Delta$ on $\R^3$.

In studying the dynamics around the ground state, the uniqueness and nondegeneracy of ground state play a crucial role. The uniqueness and nondegeneracy of ground state also have applications to the study of nonlinear elliptic equations. This is our main motivation for the present paper.

In the case of a single power nonlinearity, the uniqueness and nondegeneracy of positive solutions to the Schr\"{o}dinger equation
 \beq\label{20220609-V1}
-\Delta u+u= u^{q-1}~~\hbox{in}~\R^3,~q\in(2,6)
\eeq
have been proved by Kwong in his celebrated paper \cite{MR969899}.
The uniqueness of the positive solution for the Schr\"{o}dinger-Newton equation
 \beq\label{20220609-VU3}
-\Delta u+u=\left(I_2\star u^2\right)u~~\hbox{in}~\R^3
\eeq
 also has been solved, see in \cite{Lieb1976/77,Ma2010,MR1677740,MR2492602}. The nondegeneracy of the solution for (\ref{20220609-VU3}) was proved by Lenzmann \cite[Theorem 1.4]{Lenzmann2009}, see also Tod-Moroz \cite{MR1677740} and Wei-Winter \cite{MR2492602}.

When $q=3$, G. Vaira \cite{Vaira2013} proved that all positive solutions of
  \beq\label{20220609-V2}
-\Delta u+u=\left(I_2\star u^2\right)u+ u^2~~\hbox{in}~\R^3
\eeq
are radially symmetric and the linearized operator around a radial ground state is non-degenerate. However, the uniqueness of ground state solutions for \eqref{20220609-V2}
is still an open problem. Indeed, $q=3$ is Coulomb-Sobolev critical exponent, see e.g. \cite{lions1987solutions,MR3568051,MR2679375} for details.

In \cite{1,2}, T. Akahori et al. studied the ground state for the following Schr\"{o}dinger equation
\beq\nonumber
-\Delta u + \lambda u = |u|^{\frac{2N}{N-2}-2}u + |u|^{q-2}u~~\text{in}~\R^N,
\eeq
 where $N \geq 3$, $2 < q < \frac{2N}{N-2}$. T. Akahori et al.\cite{2} proved that if $N \geq 4$, the ground state is unique when $\lambda$ is sufficiently small. And as $\lambda\to 0$, the unique ground state $u$ tends to the unique positive solution of the the Schr\"{o}dinger equation
 \eqref{20220609-V1}.
In \cite{louis}, J. Louis, J. Zhang and X. Zhong studied the following Schr\"{o}dinger equation
\beq\label{eq:20231106-1}
-\Delta u + \lambda u = g(u)~~\text{in}~\R^N
\eeq
under the following assumptions:
\begin{itemize}
\item[(G1)] $g \in C^1(\R)$, $g(s)>0$ for $s> 0$; $g(s)=0$ for $s\leq0$;
\item[(G2)] there exists some $(\alpha,\beta)\in (2,6)$ satisfying
such that
$$\lim_{s\rightarrow 0^+}\frac{g'(s)}{s^{\alpha-2}}=\mu_1(\alpha-1)>0 \quad \mbox{and} \quad \lim_{s\rightarrow +\infty}\frac{g'(s)}{s^{\beta-2}}=\mu_2(\beta-1)>0.$$
\end{itemize}
By studying the asymptotic behavior of positive solutions as $\lambda\to 0$ and $\lambda\to\infty$ both in $C_{r,0}(\R^N)$ and $H^1(\R^N)$, the authors obtained the uniqueness of positive solutions to \eqref{eq:20231106-1} when $\lambda$ sufficiently close $0$ or $+\infty$. Here, $C_{r,0}(\R^N)$ denotes the space of continuous radial functions vanishing at $\infty$.

However, as far as we know, there is no uniqueness and nondegeneracy result about the ground
states for \eqref{eq:20220603-1}(or the equivalent equation \eqref{eq:20220420-e1}). This might be due to the fact that the study of uniqueness and nondegeneracy of ground states for nonlocal problem \eqref{eq:20220603-1}(or the equivalent equation \eqref{eq:20220420-e1}) requires some work far from trivial. To fill up this gap, in this article, we study the uniqueness and nondegeneracy of ground states to \eqref{eq:20220603-1} when $\lambda$ sufficiently close $0$ or $+\infty$.

Inspired by \cite{louis}, we study the asymptotic behaviour of positive solutions as $\lambda\to 0$ and $\lambda\to\infty$, and then prove the uniqueness result.
\begin{theorem}\label{th1.2}
 Let $q\in(2,3)\cup(3,6)$. Then, there exist $\lambda^*, \lambda_*>0$ such that
for any $\lambda>\lambda^*$ or $0<\lambda<\lambda_*$,  the positive ground state to \eqref{eq:20220603-1} is unique in $H^1(\R^3)$.
\end{theorem}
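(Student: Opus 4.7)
The plan is to adapt the asymptotic-analysis strategy of \cite{louis} to the non-local problem \eqref{eq:20220420-e1}: in each regime $\lambda\to 0^+$ or $\lambda\to+\infty$, and for each of the ranges $q\in(2,3)$ and $q\in(3,6)$, rescale the ground states so that they converge to the unique positive ground state of a limiting equation --- either the pure Schr\"odinger equation \eqref{20220609-V1} or the pure Schr\"odinger--Newton equation \eqref{20220609-VU3} --- and then transfer uniqueness back via a linearization argument based on nondegeneracy of the limit.

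I would work with the scaling $u_\lambda(x)=\mu(\lambda)\,v_\lambda(\sqrt{\lambda}\,x)$, under which \eqref{eq:20220420-e1} becomes
\[
-\Delta v_\lambda+v_\lambda=\frac{\mu(\lambda)^2}{\lambda^2}(I_2\star v_\lambda^2)v_\lambda+\frac{\mu(\lambda)^{q-2}}{\lambda}|v_\lambda|^{q-2}v_\lambda.
\]
Taking $\mu=\lambda$ normalises the Hartree term and leaves local coefficient $\lambda^{q-3}$; taking $\mu=\lambda^{1/(q-2)}$ normalises the local term and leaves Hartree coefficient $\lambda^{(6-2q)/(q-2)}$. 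For $q\in(3,6)$ the first choice suits $\lambda\to 0^+$ (limit Choquard \eqref{20220609-VU3}) and the second suits $\lambda\to+\infty$ (limit power Schr\"odinger \eqref{20220609-V1}); for $q\in(2,3)$ the two roles are swapped. Using the radiality of positive ground states (Gidas--Ni--Nirenberg/moving-plane-type arguments adapted to this nonlocal setting), together with $H^1$-uniform bounds on $v_\lambda$ coming from the Pohozaev/Nehari identities for \eqref{eq:20220420-e1} and from matching of the ground-state energy levels, I would show that along any subsequence $\lambda_n$ in the relevant regime, $v_{\lambda_n}\to V_*$ strongly in $H^1_{\rm rad}(\R^3)$, where $V_*$ is the unique positive radial ground state of the limit equation (Kwong \cite{MR969899} in the power case; Lieb \cite{Lieb1976/77} and Tod--Moroz \cite{MR1677740} in the Choquard case). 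Strong convergence of the nonlocal term $(I_2\star v_n^2)v_n$ is handled via the compact embedding $H^1_{\rm rad}\hookrightarrow L^p$ for $p\in(2,6)$ combined with the Hardy--Littlewood--Sobolev inequality.

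Uniqueness then follows by contradiction. Assume that for some sequence $\lambda_n$ in the relevant regime there exist two distinct positive ground states $u^1_{\lambda_n}\neq u^2_{\lambda_n}$; denote the rescaled pair by $v^i_n$ and set $\varphi_n=(v^1_n-v^2_n)/\|v^1_n-v^2_n\|_{H^1}$. Subtracting the two rescaled equations and using the strong convergence $v^i_n\to V_*$, one sees that $\varphi_n$ satisfies a linear equation whose coefficients converge to those of the linearized operator $L_*$ of the limit equation at $V_*$. Any weak limit $\varphi_\infty$ therefore lies in $\ker L_*$; since we work in the radial sector and $V_*$ is nondegenerate there (Kwong \cite{MR969899} in the power case; Lenzmann \cite{Lenzmann2009}, Tod--Moroz \cite{MR1677740}, Wei--Winter \cite{MR2492602} in the Choquard case), $\varphi_\infty=0$. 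Radial compactness and HLS upgrade this to strong $H^1$-convergence $\varphi_n\to 0$, contradicting $\|\varphi_n\|_{H^1}=1$.

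The most delicate point is the linearization step in the regime $q\in(3,6)$, $\lambda\to 0^+$, where $L_*$ is itself nonlocal and the difference $(I_2\star(v^1_n)^2)v^1_n-(I_2\star(v^2_n)^2)v^2_n$ splits into three convolution-type pieces that must be passed to the limit simultaneously; this is where the strong (not merely weak) $H^1$-convergence $v^i_n\to V_*$ is essential. A secondary subtlety is ensuring the $H^1$-uniform bounds on $v_\lambda$ hold on the full range $q\in(2,3)\cup(3,6)$: the Coulomb--Sobolev critical value $q=3$ is excluded precisely because such bounds are known to fail there, which accounts for the gap in the hypotheses.
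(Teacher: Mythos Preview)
Your proposal is correct and shares the global strategy with the paper --- rescale, converge to a limit problem, and transfer uniqueness via linearization and nondegeneracy of the limit --- but the implementation of the final contradiction step differs genuinely. The paper normalizes the difference of two hypothetical ground states by the \emph{$L^\infty$ norm}, working with the system form \eqref{eq:20220603-1} so that both the $u$-component and the auxiliary $v=I_2\star u^2$-component are tracked, and then invokes elliptic regularity and $C^2_{\mathrm{loc}}$ compactness to produce a nontrivial radial element of the limiting kernel; this requires uniform $L^\infty$ bounds and uniform decay in $C_{r,0}(\R^3)$ of the rescaled solutions, obtained via a double blow-up argument on the system (Lemma~\ref{lemma:prior-estimate1}) together with Newton's theorem for radial convolutions (Lemmas~\ref{20230413-l1} and \ref{lemma:20220423-l1}). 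Your route stays entirely in $H^1_{\mathrm{rad}}$: normalize by $\|\cdot\|_{H^1}$, use the Strauss compact embedding plus HLS to identify the weak limit $\varphi_\infty\in\ker L_*$, conclude $\varphi_\infty=0$ from radial nondegeneracy, and then test the linearized equation against $\varphi_n$ to force $\|\varphi_n\|_{H^1}\to 0$. This is more economical for the uniqueness statement alone and sidesteps the $C_{r,0}$ machinery entirely; the paper's $L^\infty$ framework, by contrast, delivers the pointwise asymptotics of Theorems~\ref{th4.3} and \ref{th5.3} as additional information, and its system formulation reappears in the nondegeneracy proof of Section~5.
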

\begin{remark}
Compared with the Schr\"{o}dinger equation studied in \cite{louis}, there are some new difficulties when we study the asymptotic behavior of ground state solutions. The non-local characteristic of \eqref{eq:20220420-e1} prevents us from obtaining a priori estimation of solutions, if we adopt the blow up procedure used in \cite{louis}. To overcome this difficult, we use the system  \eqref{eq:20220603-1}
 equivalent to equation \eqref{eq:20220420-e1}, and obtain uniform boundedness by using the double blow-up method.

 Another problem is that the non-local characteristic of \eqref{eq:20220420-e1} prevents us from obtaining uniform decay of solutions, we use the Newton's theorem for radially symmetric functions  \cite[(9.7.5)]{LiebLoss.2001} to overcome this difficulty, see in Lemma \ref{20230413-l1} and Lemma \ref{lemma:20220423-l1}.
\end{remark}

The second goal of this article is to prove the nondegeneracy of ground states to \eqref{eq:20220603-1}. Let $(U_\lambda, V_\lambda)$ be a ground state for \eqref{eq:20220603-1}. We see that
its derivatives $(\partial_i U_\lambda, \partial_i V_\lambda)$ are solution of the linearized system
\beq\label{eq:20230419-e1}
 \left\{
\begin{array}{ll}
\aligned
&-\Delta\xi + \lambda\xi = V_\lambda\xi+\zeta U_\lambda+(q-1)U_\lambda^{q-2}\xi,\\
&-\Delta\zeta= 2\xi U_\lambda.
\endaligned
\end{array}
\right.
\eeq
 Define the linear operator $\mathcal{L}_{\lambda}^+$ associated to $(U_{\lambda}, V_{\lambda})$ by
  \beq\label{eq:20220913-e2}
\mathcal{L}_{\lambda}^+ \left(\begin{matrix} \xi\\ \zeta \end{matrix} \right)  =\left(\begin{matrix} -\Delta\xi + \lambda\xi- V_{\lambda} \xi - \zeta U_{\lambda}-(q-1)U_\lambda^{q-2}\xi \\ -\Delta \zeta- 2\xi U_{\lambda}\end{matrix} \right).
\eeq
Obviously, $(\partial_i U_\lambda, \partial_i V_\lambda)\in \text{Ker}(\mathcal{L}_{\lambda}^+)$. If these derivatives and their linear combinations exhaust the kernel of the operator $\text{Ker}(\mathcal{L}_{\lambda}^+)$, then the ground state solutions are called nondegenerate. We use the spherical harmonics method to prove the nondegeneracy.
\begin{theorem}\label{th1.3}
 Let $q\in(2,3)\cup(3,6)$. Suppose that $(\xi, \zeta) \in H^2(\R^3) \times H^2(\R^3)$ satisfies the eigenvalue problem \eqref{eq:20230419-e1}. Then, there exist $\lambda^*, \lambda_*>0$ such that
for any $\lambda>\lambda^*$ or $0<\lambda<\lambda_*$,
$$(\xi, \zeta) \in  \text{span}\{(\partial_i U_\lambda, \partial_i V_\lambda), i = 1,2, 3\}.$$
\end{theorem}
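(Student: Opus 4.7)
The plan is to decompose the eigenvalue problem \eqref{eq:20230419-e1} into spherical harmonic sectors, exploit the radial symmetry of $(U_\lambda,V_\lambda)$, and then transfer nondegeneracy from the two known limits (Kwong's pure-power equation \eqref{20220609-V1} and the Choquard--Lenzmann equation \eqref{20220609-VU3}) by a perturbation argument based on the asymptotic profiles that underlie Theorem \ref{th1.2}. Concretely, since $U_\lambda, V_\lambda$ are radial, the operator $\mathcal{L}_\lambda^+$ commutes with the action of $SO(3)$ and therefore preserves each angular-momentum subspace. Writing $\xi=\sum_{\ell,m}\xi_{\ell,m}(r)Y_{\ell,m}(\omega)$ and $\zeta=\sum_{\ell,m}\zeta_{\ell,m}(r)Y_{\ell,m}(\omega)$, the system splits into decoupled radial problems $\mathcal{L}_{\lambda,\ell}^+(\xi_\ell,\zeta_\ell)=0$, where the radial Laplacian carries the centrifugal term $\ell(\ell+1)/r^2$. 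It then suffices to show: (i) for $\ell=0$ the kernel is trivial; (ii) for $\ell=1$ the kernel is exactly $\mathrm{span}\{(\partial_i U_\lambda,\partial_i V_\lambda)\}$, which is already three-dimensional; (iii) for $\ell\ge 2$ the kernel is trivial.

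For sectors with $\ell\ge 2$, I would exploit the positive centrifugal term together with the bilinear form associated with $\mathcal{L}_\lambda^+$, using that on the ground-state sector the quadratic form controlled by $U_\lambda,V_\lambda$ satisfies a Perron--Frobenius-type positivity: any eigenfunction in the radial ground-state sector must have a fixed sign, which is incompatible with the non-radial character of $Y_{\ell,m}$ for $\ell\ge 2$; equivalently, the spectral gap between the lowest and second radial eigenvalue plus the centrifugal shift yields strict positivity of $\mathcal{L}_{\lambda,\ell}^+$. For the $\ell=0$ sector, I would argue by contradiction: a nontrivial radial kernel element would, upon the same rescalings as in the proof of Theorem \ref{th1.2}, converge either to a nontrivial radial kernel element of the linearization of \eqref{20220609-V1} (for an appropriate limit of $\lambda$) or of \eqref{20220609-VU3} (for the other limit), contradicting the known radial nondegeneracy in Kwong \cite{MR969899} and Lenzmann \cite{Lenzmann2009}.

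The $\ell=1$ sector is where the three derivatives $(\partial_i U_\lambda,\partial_i V_\lambda)$ live, since $\partial_i f(|x|)=(x_i/|x|)f'(|x|)$ is purely in the spherical harmonics of degree one. To prove that these exhaust the kernel, I would again compare with the linearizations of \eqref{20220609-V1} and \eqref{20220609-VU3} in the respective limits, whose $\ell=1$ kernels are known to be exactly three-dimensional. The main obstacle, and the step that requires the most care, is the uniformity of this perturbation argument for the nonlocal system: unlike the local case, the coupling $-2\xi U_\lambda$ in the second line of \eqref{eq:20230419-e1} and the potential $V_\lambda=I_2\star U_\lambda^2$ in the first line must be controlled uniformly in $\lambda$ along the rescaling used for Theorem \ref{th1.2}. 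This will require uniform decay of $U_\lambda$ and of the kernel candidates (using Newton's theorem as in Lemma \ref{20230413-l1} and Lemma \ref{lemma:20220423-l1}), together with a compactness/concentration analysis ensuring that the rescaled kernel candidates do not vanish in the limit. Once uniform invertibility of $\mathcal{L}_{\lambda,\ell}^+$ on the orthogonal complement of the known kernel is established on an appropriately weighted $H^2\times H^2$, an implicit-function-type argument closes the proof for $\lambda>\lambda^*$ and $0<\lambda<\lambda_*$.
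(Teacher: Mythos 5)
Your overall frame---decompose \eqref{eq:20230419-e1} into spherical harmonics, keep the $\ell=1$ sector for the translation derivatives, and transfer nondegeneracy from the limit profiles \eqref{20220609-V1} and \eqref{20220609-VU3}---is the right one, and your treatment of the radial ($\ell=0$) sector by rescaling, normalizing and passing to the limit is exactly what the paper does (using Propositions \ref{p2.6} and \ref{p2.7} together with Theorems \ref{th:20220608} and \ref{th5.4}). However, the two sectors you dispatch most quickly are where your plan has genuine gaps. For $\ell\ge 2$ you appeal to ``Perron--Frobenius-type positivity'' and a sign/fixed-sign argument; this is precisely the step the paper flags as \emph{not} directly available here, because the linearization is a coupled system with the nonlocal Hartree interaction, and your one-line justification (an eigenfunction of fixed sign being ``incompatible with the non-radial character of $Y_{\ell,m}$'') is not an argument---in a sectorial decomposition the sign question concerns the radial factor, and making a Perron--Frobenius or sector-monotonicity argument work for this system would itself be the main technical task. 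The idea you are missing is variational: since the ground state minimizes the action among nontrivial critical points, the second variation $I''_{\nu}(u_\nu,v_\nu)$ is nonnegative on $\{\varphi\perp_{H^1}u_\nu,\ \psi\perp_{\dot H^1}v_\nu\}$ (Lemma \ref{3.71}); testing with $h(|x|)x^i/|x|$, $l(|x|)x^i/|x|$ gives nonnegativity of the $\ell=1$ sector form $A_1$ (Corollary \ref{3.8}), and then the identity $A_k=A_1+(\lambda_k-2)\int(|f|^2+|g|^2)\,dr$ with $\lambda_k>2$ annihilates every sector $k\ge2$ for \emph{all} $\nu$ (resp.\ $\mu$), with no spectral-gap or positivity-improving input.

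For $\ell=1$ your proposal is perturbative: uniform invertibility of $\mathcal{L}^+_{\lambda,1}$ on the complement of the known kernel plus an implicit-function argument. You correctly identify that this requires uniform decay and a non-vanishing/compactness analysis, but none of that is carried out, and it is also unnecessary: the paper identifies the $\ell=1$ kernel \emph{exactly and non-perturbatively} (Lemma \ref{3.9}, and Lemma \ref{3.19} for the $\mu$-family). Differentiating the radial system in $r$ and testing the resulting equations against $f^2\bar U_r^{-1}r^2$ and $g^2\bar V_r^{-1}r^2$ (a ground-state-representation/completion-of-squares computation) yields $A_1((f,g),(f,g))\ge \int_{\R_+}|\bar U_r\,\partial_r(f/\bar U_r)|^2r^2dr+2\int_{\R_+}\bigl(\sqrt{\bar V_r/\bar U_r}\,f-\sqrt{\bar U_r/\bar V_r}\,g\bigr)^2\bar U r^2dr$, which forces $f=c\,\bar U_r$, $g=c\,\bar V_r$. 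Consequently the smallness of $\nu$ (equivalently $\lambda$ near $0$ or $+\infty$, after the rescalings to \eqref{eq:20230412-e2} and \eqref{eq:20230415-e12}) is needed only in the radial sector, where the limit argument you sketched applies. As written, your proof would be complete only if you (a) replace the Perron--Frobenius claim for $\ell\ge2$ by an actual positivity proof (the variational one above, or a genuinely worked-out sectorial Perron--Frobenius theory for this nonlocal system), and (b) either carry out the uniform-invertibility/IFT machinery for $\ell=1$ with the required uniform decay and non-vanishing estimates, or adopt the exact kernel identification in that sector.
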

\begin{remark}
The spherical harmonics method is used in studying the nondegeneracy of ground states for the Schrodinger equation, see e.g. \cite{FV}. Compared with \cite{FV}, due to the presence of the Hartree term, we can't use the Perron-Frobenius-type arguments in \cite{FV} directly.
To overcome this difficulty, we rely on more complex analysis for the action of the linearized system \eqref{20221219-e1} with respect to decomposition into spherical harmonics, see Lemma \ref{3.9}.
\end{remark}

The paper is organized as follows. In Section 2, we give the existence, regularity and radial symmetry of ground state solutions for equation \eqref{eq:20220603-1}, and some related Liouville type theorems which will be used in blow up analysis of ground state solutions. In Section 3 we address a priori bounds of ground state solutions and asymptotic behaviors of solutions in $C_{r,0}(\R^3)$, as $\lambda\to0^+$ or $\lambda\to+\infty$. In Sections 4-5, we prove the uniqueness result ( Theorem \ref{th1.2}) and the nondegeneracy result (Theorem \ref{th1.3}), respectively.

\vskip4mm
{\section{Preliminaries and Background Results}}
 \setcounter{equation}{0}

\vskip4mm
\subsection{Existence, regularity and radial symmetry of solutions}

The associate energy functional for \eqref{eq:20220420-e1} is defined as
\begin{equation}\label{J:20220606}
J_\lambda(u):=\frac{1}{2}\|\nabla u\|_2^2+\frac{\lambda}{2}\|u\|_2^2-\frac{1}{4}\int_{\R^3}\left(I_2\star u^2\right)u^2dx-\frac{1}{q}\int_{\R^3}|u|^qdx,\quad \forall u\in H^1(\R^3).
\end{equation}
The energy functional $J_\lambda$ is well defined in $H^1(\R^3)$, thanks to the following Hardy--Littlewood--Sobolev inequality ( or abbreviated H-L-S inequality).
\begin{proposition}\cite{MR3625092} Let $t$, $r>1$ and $0<\alpha<N$ with $\frac{1}{t}+\frac{1}{r}=1+\frac{\alpha}{N}$, $f\in L^{t}(\R^N)$ and $h\in L^{r}(\R^N)$. There exists a constant $C(N,\alpha,t,r)$, independent of $f,h$, such that
\begin{equation*}
\left\|I_\alpha\star h\right\|_{t'}\leq C(N,\alpha,t,r)\|h\|_{L^r(\R^N)}
\end{equation*}
and
\begin{equation*}
\int_{\R^N}\left(I_\alpha\star h\right)f dx\leq C(N,\alpha,t,r) \|f\|_{L^t(\R^N)}\|h\|_{L^r(\R^N)},
\end{equation*}
where $t'$ denotes the conjugate exponent such that $\frac{1}{t'}+\frac{1}{t}=1$.
\end{proposition}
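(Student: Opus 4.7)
The plan is to prove the bilinear inequality (which implies the linear one by duality together with the converse direction) via Hedberg's pointwise estimate, reducing the Riesz-potential bound to the $L^p$ boundedness of the Hardy--Littlewood maximal function. Observe first that the two stated inequalities are equivalent: given the convolution bound $\|I_\alpha\star h\|_{t'}\leq C\|h\|_r$, Hölder yields the bilinear form since $1/t+1/t'=1$; conversely, taking $f$ in the dual unit ball and optimising recovers the $L^{t'}$ estimate. So it is enough to prove $\|I_\alpha\star h\|_{t'}\leq C\|h\|_r$ for nonnegative $h\in L^r(\R^N)$, where $I_\alpha(y)=C_{N,\alpha}|y|^{\alpha-N}$.

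The heart of the proof is the following splitting, for any $R>0$:
\begin{equation*}
(I_\alpha\star h)(x)=C_{N,\alpha}\int_{|y|<R}|y|^{\alpha-N}h(x-y)\,dy+C_{N,\alpha}\int_{|y|\geq R}|y|^{\alpha-N}h(x-y)\,dy.
\end{equation*}
The first piece I would dyadically decompose $\{|y|<R\}=\bigcup_{k\geq 0}\{2^{-k-1}R\leq|y|<2^{-k}R\}$, on each annulus bound $|y|^{\alpha-N}$ by $(2^{-k-1}R)^{\alpha-N}$, and recognise the average of $h$ on the ball of radius $2^{-k}R$, obtaining a geometric series dominated by $C R^\alpha (Mh)(x)$, where $M$ is the Hardy--Littlewood maximal operator. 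For the second piece, apply Hölder with exponents $r,r'$; the tail integral $\int_{|y|\geq R}|y|^{(\alpha-N)r'}dy$ converges precisely because the HLS relation $1/t+1/r=1+\alpha/N$ forces $r<N/\alpha$, and equals a constant times $R^{(\alpha-N)r'+N}$, giving the bound $C R^{\alpha-N/r}\|h\|_r$. Adding,
\begin{equation*}
(I_\alpha\star h)(x)\leq C\bigl(R^\alpha (Mh)(x)+R^{\alpha-N/r}\|h\|_r\bigr).
\end{equation*}

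Now I would optimise in $R$ by setting the two summands equal, i.e.\ $R^{N/r}=\|h\|_r/(Mh)(x)$, which yields the Hedberg pointwise inequality
\begin{equation*}
(I_\alpha\star h)(x)\leq C\,(Mh)(x)^{1-\alpha r/N}\,\|h\|_r^{\alpha r/N}.
\end{equation*}
Raising to the power $t'$ and integrating, the key arithmetic check is that $(1-\alpha r/N)t'=r$, which is exactly equivalent to $1/t'=1/r-\alpha/N$, i.e.\ the HLS relation. Using the Hardy--Littlewood maximal theorem $\|Mh\|_r\leq C_r\|h\|_r$ (which requires $r>1$, also part of the hypothesis) then gives
\begin{equation*}
\|I_\alpha\star h\|_{t'}^{t'}\leq C\,\|h\|_r^{\alpha r t'/N}\|Mh\|_r^r\leq C\,\|h\|_r^{t'},
\end{equation*}
and the bilinear inequality follows by Hölder.

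The main obstacle is really just the bookkeeping: making the dyadic near-field bound clean, verifying the exponent arithmetic $(1-\alpha r/N)t'=r$ carefully so that the product $\|Mh\|_r^r\|h\|_r^{\alpha r t'/N}$ collapses to $\|h\|_r^{t'}$, and recording that the hypotheses $r>1$, $r<N/\alpha$ (implicit in $1/t+1/r=1+\alpha/N$ with $t>1$) are exactly what is needed for the maximal function step and the tail integral to converge. An alternative route, which I would mention but not pursue, is via weak-type $(r,t')$ estimates using the layer-cake decomposition of $I_\alpha$ and Marcinkiewicz interpolation; both approaches give the result with essentially the same constants.
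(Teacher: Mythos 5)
Your argument is correct, and it is genuinely different in character from what the paper does: the paper offers no proof at all for this proposition, simply citing it as the classical Hardy--Littlewood--Sobolev inequality from the Moroz--Van Schaftingen survey (which in turn rests on Lieb--Loss). Your route via Hedberg's pointwise estimate is the standard elementary proof: the exponent bookkeeping checks out, since the relation $\frac{1}{t}+\frac{1}{r}=1+\frac{\alpha}{N}$ gives $\frac{1}{t'}=\frac{1}{r}-\frac{\alpha}{N}$, hence both $r<\frac{N}{\alpha}$ (so the far-field tail $\int_{|y|\geq R}|y|^{(\alpha-N)r'}dy$ converges and the Hedberg exponent $1-\frac{\alpha r}{N}$ is positive) and $\bigl(1-\frac{\alpha r}{N}\bigr)t'=r$, so that $\|h\|_r^{\alpha r t'/N}\|Mh\|_r^{r}\leq C\|h\|_r^{t'}$ after the maximal theorem (which uses $r>1$, as you note). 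The equivalence of the convolution bound and the bilinear bound by H\"older and $L^{t}$--$L^{t'}$ duality is also fine, with the minor caveat that in the duality direction one should take $h$ (or $f$) nonnegative or use truncation so that the supremum characterization of the $L^{t'}$ norm applies without assuming a priori that $I_\alpha\star h\in L^{t'}$. What your approach buys is a self-contained proof with explicit (non-sharp) constants, entirely adequate for the way the inequality is used in the paper (the bound \x{20230418-ea1} and subsequent estimates); what the citation buys is brevity and, if one ever needed it, access to the sharp constant, which requires the rearrangement argument of Lieb rather than the maximal-function method.
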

Indeed, by H-L-S inequality, H\"{o}lder's inequality and Sobolev inequality, for any $u,v\in H^1(\R^3)$ we have
\begin{equation}\label{20230418-ea1}
\int_{\R^3}\left(I_2\star u^2\right)u^2 dx\leq C \|u\|^4_{\frac{12}{5}}<\infty.
\end{equation}

A nontrivial solution
$u \in H^1(\R^3)$ of \eqref{eq:20220420-e1} is called a ground state solution (or least action solution) if
$$J_\lambda(u) = c_\lambda^*:= \inf\limits_{w\in \mathcal{M}_\lambda}J_\lambda(w),$$
where
\beq\label{20230417-e2}
\mathcal{M}_\lambda:=\{w\in H^1(\R^3)\setminus\{0\}: ~~J'_\lambda(w) = 0\}.
\eeq

\begin{proposition}\label{pro:20220606} (\cite[Theorem 1.1]{LMZ}) The problem \eqref{eq:20220420-e1} admits a nontrivial solution $u \in H^1(\R^3)$ at the level $c_\lambda=c_\lambda^*$,
where \begin{equation}\label{MP1}
c_\lambda:=\inf\limits_{\gamma\in \Gamma_\lambda}\max\limits_{t\in[0,1]}J_\lambda(\gamma(t)),
\end{equation}
where
\begin{equation}\label{MP2}
\Gamma_\lambda:=\left\{\gamma\in C([0,1], H^1(\R^3)): \gamma(0)=0, J_\lambda(\gamma(1))<0\right\}.
\end{equation}
 Moreover, $u \in W^{2,s}(\R^3)$ for every $s > 1$ and $u$ satisfies the Pohozaev identity
\beq\label{eq:20220606-p}
\mathcal{P}_\lambda(u):=\frac{1}{2}\|\nabla u\|_2^2+\frac{3}{2}\lambda \|u\|_2^2-\frac{5}{4}\int_{\R^3}\left(I_2\star u^2\right)u^2-\frac{3}{q}\int_{\R^3}|u|^qdx=0.
\eeq
\end{proposition}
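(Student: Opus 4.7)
The plan is to establish the existence at the mountain pass level, then upgrade regularity by a bootstrap, and finally derive the Pohozaev identity.

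First, I would verify that $J_\lambda$ has mountain pass geometry on $H^1(\R^3)$. The bound $J_\lambda(0)=0$ is immediate, and combining the Sobolev embedding with the H-L-S estimate \eqref{20230418-ea1} yields $J_\lambda(u)\ge c_0\|u\|_{H^1}^2-C\|u\|_{H^1}^4-C\|u\|_{H^1}^q$, giving a positive lower bound on a small sphere in $H^1(\R^3)$ (using $q>2$). For a loss-of-positivity point, I would fix any $v\neq 0$ and exploit the scaling $v_t(x):=t^{\alpha}v(t^{\beta}x)$ for suitable $\alpha,\beta$ so that $J_\lambda(v_t)\to -\infty$ as $t\to\infty$; the quartic Hartree term (and the $L^q$ term, for $q>2$) dominate the quadratic part. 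This yields $c_\lambda>0$ and a mountain pass structure.

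The key step is then to produce a bounded Palais--Smale (or Cerami) sequence at the level $c_\lambda$. Since the problem is autonomous on $\R^3$ with the Hartree nonlinearity having cubic growth and the local nonlinearity being pure power, the standard Ambrosetti--Rabinowitz condition holds for $q>3$ (and for $q\in(2,3]$ can be replaced by the Pohozaev constraint). I would use Jeanjean's monotonicity trick applied to the family $J_\lambda^t(u)=\frac{1}{2}\|\nabla u\|_2^2+\frac{\lambda}{2}\|u\|_2^2-t\bigl(\tfrac{1}{4}\int(I_2\star u^2)u^2+\tfrac{1}{q}\int|u|^q\bigr)$ to obtain a bounded Palais--Smale sequence at almost every level, together with an approximate Pohozaev identity. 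Passing to a subsequence, I may assume $u_n\rightharpoonup u$ in $H^1(\R^3)$. Loss of compactness is ruled out in the usual way: by translating a suitable maximizing sequence of balls I may assume that a fixed amount of $L^2$ mass remains near the origin (nonvanishing), and a concentration-compactness dichotomy (excluding splitting via subadditivity of $c_\lambda$ with respect to the Hartree+power structure, or equivalently via Lions' lemma applied to the $L^q$ norm) gives $u\not\equiv 0$, hence a nontrivial critical point. The identification $c_\lambda=c_\lambda^*$ is then obtained by the standard path argument: any critical point lies above $c_\lambda$ via the fibering map $t\mapsto J_\lambda(u(\cdot/t))$, while $u$ realizes $c_\lambda$.

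For regularity I would use a Moser/Brezis--Kato bootstrap. The Newton potential $V:=I_2\star u^2$ is bounded and H\"older continuous when $u\in L^p\cap L^{p'}$ for suitable $p$, so the right-hand side $Vu+|u|^{q-2}u\in L^s$ for all large $s$ follows by iterating elliptic $L^p$ estimates starting from $u\in H^1(\R^3)\hookrightarrow L^6(\R^3)$; then classical Calder\'on--Zygmund theory promotes $u$ to $W^{2,s}(\R^3)$ for every $s>1$. The Pohozaev identity \eqref{eq:20220606-p} follows by multiplying \eqref{eq:20220420-e1} by $x\cdot\nabla u$ on $B_R$, integrating by parts, and sending $R\to\infty$; the decay of $u$ and $\nabla u$ from $u\in W^{2,s}$ for large $s$ (hence $u,\nabla u\to 0$ at infinity by Morrey) justifies the vanishing of boundary terms, while the computation $\int(I_2\star u^2)(x\cdot\nabla u)u=-\frac{5}{4}\int(I_2\star u^2)u^2$ is obtained from the homogeneity of the Riesz kernel $I_2$ of order $-1$ on $\R^3$ (equivalently, by differentiating under a dilation $u_t(x)=u(x/t)$).

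The main obstacle I expect is obtaining a bounded Palais--Smale sequence and then excluding its dichotomy: the mixture of a nonlocal cubic term with a local power term makes the subadditivity inequality $c_\lambda<c_\lambda^{(1)}+c_\lambda^{(2)}$ (for any nontrivial splitting of mass) delicate, especially in the Coulomb--Sobolev range $q\in(2,3)$ where the $L^q$ norm does not control the Hartree term from above in a simple way. Handling this step, via Jeanjean's trick plus a carefully chosen profile decomposition, is where the real work lies.
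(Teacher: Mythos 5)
Your proposal is a correct blueprint, but note that the paper itself does not prove this proposition: it is quoted directly from \cite[Theorem 1.1]{LMZ}, and the only work the paper does is the subsequent remark checking that the parameters $N=3$, $\alpha=2$, $q\in(2,3)\cup(3,6)$ satisfy the hypotheses (A1') and (A8') of the appendix of \cite{LMZ}. So you are in effect re-proving the cited theorem, and your outline matches the standard route: mountain pass geometry, a bounded Palais--Smale sequence, compactness, a Brezis--Kato/Calder\'on--Zygmund bootstrap for $W^{2,s}(\R^3)$, and the Pohozaev identity by testing with $x\cdot\nabla u$ (your constant $-\frac{5}{4}\int_{\R^3}(I_2\star u^2)u^2\,dx$ is the correct dilation factor $\frac{N+\alpha}{4}$ with $N=3$, $\alpha=2$). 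Two places where your sketch can be simplified or sharpened: first, the Ambrosetti--Rabinowitz-type inequality actually holds for every $q\in(2,6)$ with $\theta=\min\{4,q\}>2$, since $\theta\bigl(\tfrac14 D(u)+\tfrac1q\|u\|_q^q\bigr)\le D(u)+\|u\|_q^q$ where $D(u)=\int_{\R^3}(I_2\star u^2)u^2\,dx$ --- this is exactly the computation the paper uses in Lemma \ref{2023412-l1} --- so Jeanjean's monotonicity trick is not needed to obtain bounded Palais--Smale sequences, contrary to your concern about $q\in(2,3)$; second, since the equation is autonomous, compactness is most easily handled either by working in $H^1_r(\R^3)$ (compact embedding into $L^p$, $2<p<6$, which also handles the Hartree term) or by a single translation after excluding vanishing, rather than by a full dichotomy/strict-subadditivity analysis, while the identification $c_\lambda=c_\lambda^*$ does require the Pohozaev/fibering path argument of Jeanjean--Tanaka type that you mention. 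In short, your approach would reconstitute the content of \cite[Theorem 1.1]{LMZ}; what the citation buys the paper is precisely the delicate compactness and level-identification work that your sketch leaves at the level of a plan.
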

\begin{remark} Indeed, Theorem 1.1 in \cite{LMZ} is a general result than Proposition \ref{pro:20220606}.
In \cite[Appendix]{LMZ}, the authors give some conditions such that Proposition \ref{pro:20220606} hold.
In fact, the specific parameters in our setting, $N = 3$, $\alpha=2$ and $p\in(2,3)\cup(3,6)$, satisfy
$(A1')$ and $(A8')$ in \cite[Appendix]{LMZ}.
\end{remark}
\begin{remark}
The positive of solution $u$ can be obtained similarly to that of \cite{LMZ,MR3356947}. In what follows, we use $u^{q-1}$ instead of $|u|^{q-2}u$.
\end{remark}

\begin{corollary} \label{2023411-cro1} Let $u\in H^1(\R^3)$ be a solution to \eqref{eq:20220420-e1}, then
 $u\in C^{1,\sigma}(\R^3)$ for any $\sigma\in(0,1)$.
\end{corollary}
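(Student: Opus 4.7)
The plan is a standard bootstrap combined with Morrey's embedding. Given any $u\in H^1(\R^3)$ solving \eqref{eq:20220420-e1}, I first upgrade the integrability of the nonlocal potential $V:=I_2\star u^2$ and of the power term $u^{q-1}$, then use Calder\'on--Zygmund $W^{2,s}$-regularity for $-\Delta+\lambda$ to gain two derivatives, and iterate until Morrey's embedding applies.

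Concretely, Sobolev embedding gives $u\in L^p(\R^3)$ for every $p\in[2,6]$, hence $u^2\in L^r(\R^3)$ for $r\in[1,3]$. By the Hardy--Littlewood--Sobolev inequality, $V\in L^{r^*}(\R^3)$ with $1/r^*=1/r-2/3$ whenever the right-hand side is positive; a routine splitting of the Newton kernel $|x|^{-1}$ into its singular local part and its decaying tail, together with $u^2\in L^1\cap L^3$, upgrades this to $V\in L^\infty(\R^3)$. The right-hand side of \eqref{eq:20220420-e1} then lies in $L^s(\R^3)$ for an expanding range of $s$, and alternating $W^{2,s}$-regularity for $-\Delta+\lambda$ with Sobolev embedding produces $u\in W^{2,s}(\R^3)$ for every $s>1$ after finitely many steps. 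This is precisely the regularity already asserted by Proposition \ref{pro:20220606}, and the argument from \cite{LMZ} is insensitive to whether $u$ is the mountain-pass ground state or an arbitrary $H^1$ solution, because the bootstrap only uses the equation together with $u\in H^1$.

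The corollary then follows from Morrey's embedding $W^{2,s}(\R^3)\hookrightarrow C^{1,1-3/s}(\R^3)$, valid for $s>3$: given $\sigma\in(0,1)$, any choice $s>3/(1-\sigma)$ produces $u\in C^{1,\sigma}(\R^3)$. There is no genuine obstacle here; the only mildly technical ingredient is the passage to $V\in L^\infty$, since a single application of H-L-S falls short at the borderline exponent and one must split the Newton kernel into near- and far-field pieces. Everything else is a routine elliptic bootstrap that works verbatim for any $H^1$ solution of the equation.
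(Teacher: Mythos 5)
Your argument is correct and is, in substance, the paper's argument with the black box opened: the paper proves this corollary in one line by quoting Proposition \ref{pro:20220606} (the regularity statement borrowed from \cite{LMZ}) to get $u\in W^{2,s}(\R^3)$ for all $s>1$ and then applying Sobolev embedding, whereas you re-derive the $W^{2,s}$ regularity yourself, first showing $I_2\star u^2\in L^\infty(\R^3)$ via the near/far splitting of the Newton kernel and then bootstrapping, before finishing with the same Morrey embedding. What your route buys is a self-contained proof and, more usefully, an explicit justification that the regularity holds for \emph{every} $H^1$ solution of \eqref{eq:20220420-e1}, not only for the mountain-pass solution to which Proposition \ref{pro:20220606} literally refers --- a point the paper's citation-style proof glosses over. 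Two small points to tighten if you write it out: for $q\in(4,6)$ the very first step of the iteration stalls if you insist on one common exponent for the whole right-hand side, since $|u|^{q-1}$ is initially known only in $L^{s}$ for $s\le 6/(q-1)<2$ while $Vu$ is known only in $L^{s}$ for $s\ge 2$; either treat the two terms separately (solve $(-\Delta+\lambda)u_i=f_i$ for each piece and add) or insert a Brezis--Kato step giving $u\in L^p(\R^3)$ for all $p<\infty$ first. Also, the assertion $u\in W^{2,s}(\R^3)$ for every $s>1$ is slightly more than the bootstrap alone yields (for $s<2$ one needs decay, e.g. $u\in L^s$ with $s<2$), but this is harmless here because your conclusion only uses $s>3/(1-\sigma)$.
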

\bp
From Proposition \ref{pro:20220606}, $u \in W^{2,s}(\R^3)~\forall s\in[1, +\infty)$, then by Sobolev embedding, the conclusion holds.
\ep

\begin{corollary}\label{MP-20220607} Let $u$ be the solution obtained in Proposition \ref{pro:20220606}, then
 $c_\lambda$ satisfies the identity:
\beq\label{eq:20220606-c}
c_\lambda=J_\lambda(u) = \frac{1}{3}\int_{\R^3}|\nabla u|^2dx+\frac{1}{6}\int_{\R^3}\left(I_2\star u^2\right)u^2dx.
\eeq
And $\lambda \mapsto c_\lambda$ is non-decreasing.
\end{corollary}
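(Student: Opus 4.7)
The first identity in \eqref{eq:20220606-c} is essentially a restatement of the Pohozaev identity for a critical point of $J_\lambda$. My plan is to compute $J_\lambda(u) - \frac{1}{3}\mathcal{P}_\lambda(u)$ directly from \eqref{J:20220606} and \eqref{eq:20220606-p}. The factor $1/3$ is chosen so that the $\|u\|_2^2$ and $\|u\|_q^q$ contributions cancel: $\frac{1}{2} - \frac{1}{3}\cdot\frac{3}{2} = 0$ and $-\frac{1}{q} - \frac{1}{3}\cdot\bigl(-\frac{3}{q}\bigr) = 0$. Since $\mathcal{P}_\lambda(u) = 0$ by Proposition \ref{pro:20220606}, what remains is $\bigl(\frac{1}{2} - \frac{1}{6}\bigr)\|\nabla u\|_2^2 + \bigl(-\frac{1}{4} + \frac{5}{12}\bigr)\int_{\R^3}(I_2\star u^2)u^2\,dx$, which simplifies to $\frac{1}{3}\|\nabla u\|_2^2 + \frac{1}{6}\int_{\R^3}(I_2\star u^2)u^2\,dx$, as claimed. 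The identification $c_\lambda = J_\lambda(u)$ is already part of the statement of Proposition \ref{pro:20220606}, so nothing extra is needed there.

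For the monotonicity of $\lambda \mapsto c_\lambda$, I would exploit the fact that for each fixed $u\in H^1(\R^3)$ the map $\lambda \mapsto J_\lambda(u)$ is affine and nondecreasing, since
$J_{\lambda_1}(u) = J_{\lambda_2}(u) - \tfrac{\lambda_2-\lambda_1}{2}\|u\|_2^2 \le J_{\lambda_2}(u)$
whenever $\lambda_1 \le \lambda_2$. Two consequences follow. First, if $\gamma \in \Gamma_{\lambda_2}$ then $J_{\lambda_1}(\gamma(1)) \le J_{\lambda_2}(\gamma(1)) < 0$, so $\gamma \in \Gamma_{\lambda_1}$; that is, $\Gamma_{\lambda_2} \subset \Gamma_{\lambda_1}$. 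Second, the same comparison gives $\max_{t\in[0,1]} J_{\lambda_1}(\gamma(t)) \le \max_{t\in[0,1]} J_{\lambda_2}(\gamma(t))$ for every such $\gamma$. Taking infima over $\Gamma_{\lambda_2}$ and then enlarging the infimum to $\Gamma_{\lambda_1}$ yields
$c_{\lambda_1} \le \inf_{\gamma\in\Gamma_{\lambda_2}}\max_{t} J_{\lambda_1}(\gamma(t)) \le \inf_{\gamma\in\Gamma_{\lambda_2}}\max_{t} J_{\lambda_2}(\gamma(t)) = c_{\lambda_2}$,
which is the desired monotonicity.

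No part of this argument is genuinely difficult: the first identity is a coefficient-matching exercise powered by Pohozaev, while the monotonicity reduces to a two-line comparison of the classes $\Gamma_\lambda$. The only point to keep track of is the dual role of $c_\lambda$ (ground state energy level and mountain pass level), which is exactly what Proposition \ref{pro:20220606} provides.
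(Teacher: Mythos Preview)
Your proof is correct and follows exactly the approach the paper takes: the identity is obtained by combining \eqref{J:20220606} with the Pohozaev identity \eqref{eq:20220606-p}, and monotonicity comes from the mountain pass characterization \eqref{MP1}--\eqref{MP2}. The paper states this in two sentences without spelling out the coefficient matching or the inclusion $\Gamma_{\lambda_2}\subset\Gamma_{\lambda_1}$, so your write-up simply supplies the omitted details.
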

\begin{proof}
We deduce \eqref{eq:20220606-c} from \eqref{eq:20220606-p} and (\ref{J:20220606}).
$\lambda \mapsto c_\lambda$ is non-decreasing due to the mountain
pass characterization \eqref{MP1}-\eqref{MP2}.
\end{proof}

The radially symmetry and monotone decreasing property of $u$ can be obtained similarly to that of \cite{MR3356947} by the theory of polarization, or \cite[Theorem 3.1]{Vaira2013} by moving plane method.
\begin{proposition}\label{p2.2}
 Let $(U_\lambda, V_\lambda)\in H^1(\R^3)\times \dot{H}^1(\R^3)$ be a positive solution to \eqref{eq:20220603-1}. Then, up to a translation, $U_\lambda$ and $V_\lambda$ are radially symmetric and decreases with respect to $|x|$.
\end{proposition}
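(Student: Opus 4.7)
The plan is to follow the polarisation technique of Moroz--Van Schaftingen \cite{MR3356947} applied to the equivalent scalar equation \eqref{eq:20220420-e1}. Since $V_\lambda = I_2 \star U_\lambda^2$ is determined by $U_\lambda$, once $U_\lambda$ is shown to be radial and non-increasing about some point, the corresponding properties for $V_\lambda$ follow from the representation formula together with the radial monotonicity of the kernel $I_2(|x|) = (4\pi|x|)^{-1}$. By Proposition \ref{pro:20220606} and Corollary \ref{2023411-cro1} the ground state $U_\lambda$ may be taken positive and classical, with $U_\lambda \in W^{2,s}(\R^3) \cap C^{1,\sigma}(\R^3)$, which is enough regularity for what follows.

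For any closed affine half-space $H \subset \R^3$, let $\sigma_H$ denote the reflection across $\partial H$ and define the polarisation
$$u^H(x) := \begin{cases} \max\{u(x), u(\sigma_H x)\}, & x \in H,\\ \min\{u(x), u(\sigma_H x)\}, & x \notin H. \end{cases}$$
Polarisation preserves every $L^p$ norm and the Dirichlet integral, so the three local summands of $J_\lambda$ are unchanged, and the Pohozaev identity of Proposition \ref{pro:20220606} is preserved up to scaling. The essential step is the Riesz-type inequality for the nonlocal term,
$$\int_{\R^3} (I_2 \star u^2)\, u^2 \, dx \;\leq\; \int_{\R^3} (I_2 \star (u^H)^2)\, (u^H)^2 \, dx,$$
with equality (for a positive continuous $u$) if and only if $u = u^H$ or $u = u^H \circ \sigma_H$; the crucial ingredient is the strict decreasingness of $|x| \mapsto I_2(|x|)$. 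Combining this with the mountain pass / ground state characterisation of Proposition \ref{pro:20220606}, $U_\lambda^H$ is again admissible and satisfies $J_\lambda(U_\lambda^H) \leq J_\lambda(U_\lambda) = c_\lambda^*$, hence equality holds. The equality case then forces $U_\lambda = U_\lambda^H$ or $U_\lambda \circ \sigma_H = U_\lambda^H$ for every half-space $H$, which classically yields the existence of a point $x_0 \in \R^3$ about which $U_\lambda$ is radial and radially non-increasing.

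The main obstacle is the equality case of the polarisation inequality for the Hartree term together with the fact that $U_\lambda^H$ need not lie exactly on the Nehari/Pohozaev manifold at which $U_\lambda$ attains $c_\lambda^*$. Both points are handled as in \cite{MR3356947}: one first scales $U_\lambda^H$ via $u_t(x) = u(x/t)$ to return to the manifold, using the Pohozaev identity \eqref{eq:20220606-p}, and then reads off the equality case in the Riesz inequality. Once radial symmetry about $x_0$ is established, strict monotonicity in $|x - x_0|$ is obtained either by polarising with respect to half-spaces whose boundary does not pass through $x_0$ to exclude plateau sets, or by invoking the moving plane method for cooperative elliptic systems exactly as in \cite[Theorem 3.1]{Vaira2013}, which also applies directly to the system formulation \eqref{eq:20220603-1} through the strong maximum principle for $(U_\lambda, V_\lambda)$. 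Up to a translation we may then assume $x_0 = 0$, completing the proof.
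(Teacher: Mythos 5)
Your write-up follows exactly the two routes the paper itself invokes for Proposition \ref{p2.2} (polarization as in \cite{MR3356947}, or moving planes as in \cite[Theorem 3.1]{Vaira2013}), so in spirit it matches the paper, which gives no more detail than these citations. However, as written your argument has a genuine scope gap: the proposition is stated for an \emph{arbitrary} positive solution $(U_\lambda,V_\lambda)$ of \eqref{eq:20220603-1}, and it is later applied in exactly that generality (e.g.\ in Lemma \ref{lemma:prior-estimate1}, where the blow-up argument uses radial monotonicity of arbitrary nonnegative solutions $(u_n,v_n)$, not just ground states). Your main argument, though, is purely variational: it hinges on $J_\lambda(U_\lambda)=c_\lambda^*$ and on comparing $J_\lambda(U_\lambda^H)$ with the least-action level, which is simply unavailable for a general positive solution. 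Polarization can only ever give symmetry of least-action solutions; to prove the statement as it stands, the moving-plane argument for the cooperative system (Vaira's proof, which uses only positivity, the decay of $V_\lambda=I_2\star U_\lambda^2$, and the maximum principle) has to carry the whole proof, rather than being relegated, as in your last sentences, to an optional device for excluding plateaus after symmetry is already known.

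Two smaller points on the polarization half, in case you keep it for the ground-state case. First, the inference ``$J_\lambda(U_\lambda^H)\le J_\lambda(U_\lambda)=c_\lambda^*$, hence equality'' is not immediate, because $c_\lambda^*$ is an infimum over the critical set $\mathcal{M}_\lambda$ and $U_\lambda^H$ is not known to be a critical point; you do acknowledge this and propose the dilation $u_t=u(\cdot/t)$ to return to the Pohozaev set $\{\mathcal{P}_\lambda=0\}$ of \eqref{eq:20220606-p}, but that repair additionally requires the identification of $c_\lambda^*$ with $\inf\{J_\lambda(u):\mathcal{P}_\lambda(u)=0,\,u\neq0\}$ (via the mountain-pass characterization of Proposition \ref{pro:20220606} and \cite{LMZ}, together with the fact that $t\mapsto J_\lambda(u_t)$ has a unique maximum on the dilation ray), which should be stated rather than assumed. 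Second, note that with the combined Hartree-plus-power nonlinearity the equality analysis must be done after this rescaling, since polarization leaves the local terms invariant but the dilation does not; this is where the argument genuinely differs from the pure Choquard case of \cite{MR3356947} and deserves a line of justification.
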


\vskip4mm
\subsection{Some Liouville type theorems}

In order to get a priori bounds of $u_{\lambda}$ for \eqref{eq:20220420-e1}, we use a blow up procedure introduced by Gidas and Spruck \cite{MR619749,MR615628}. To this, we need some Liouville theorems. The first one is the typical nonlinear Liouville theorem, which goes back to J. Serrin in the 1970s.
\bl\label{lemma:20220603-l1} (\cite{MR615628}) The equation
$$-\Delta u = u^q,~~ u\geq 0,~~ x \in \R^N ,~N\geq2$$
has no nontrivial global classical solution if $q < \frac{N+2}{N-2}$.
\el

\bl\label{lemma:20220610-l1} (\cite[Theorem 8.4]{QuittnerSouplet.2007}) The problem
$$-\Delta u \geq u^q,~~ u\geq 0,~~ x \in \R^N ,~N\geq2$$
has no nontrivial global classical solution if $1<q \leq \frac{N}{N-2}$.
\el

\bl\label{lemma:20220420-l1}
There is no nontrivial nonnegative classical solution to
\beq\nonumber
-\Delta u\geq \left(\frac{1}{|x|}\star u^2\right)u~~\hbox{in}~\R^3.
\eeq
\el
\bp
We first remark that if $u\not\equiv 0$ is a  nonnegative solution, then $u(x)$ is positive in $\R^3$ by the maximum principle.
By comparison with the harmonic function $|x|^{-1}$, for any $\rho>0$, we obtain that there exists some $C_\rho>0$ such that
\beq\nonumber
u(x)\geq C_\rho |x|^{-1}, ~~\forall |x|\geq \rho.
\eeq
Then for any $x\neq 0$, we have that
\begin{align*}
\int_{\R^3}\frac{1}{|x-y|}u^2(y)dy\geq & \int_{|y|\geq 2|x|} \frac{1}{|x-y|}u^2(y)dy\\
\geq & \int_{|y|\geq 2|x|} \frac{2}{3}\frac{1}{|y|}u^2(y)dy
\geq C_{|x|} \int_{2|x|}^{+\infty} \frac{1}{r}dr=+\infty.
\end{align*}
And for $x=0$, we also have that $\int_{\R^3} \frac{1}{|y|}u^2(y)dy=+\infty$.
Hence, we obtain that
$$-\Delta u\geq \left(\frac{1}{|x|}\star u^2\right)u\geq u~\hbox{in}~\R^3,$$
which also implies that $u\equiv 0$ by J. Serrin's Liouville theorem \cite[Theorem 8.4]{QuittnerSouplet.2007}.
\ep

Indeed, the conclusion of Lemma \ref{lemma:20220420-l1} above is covered by \cite[Theroem 4.1]{MR3625092}.
In addition to the above lemmas, we also need the following Liouville type theorems about elliptic systems.

\bl\label{lemma:20220611-l1} \cite[Theorem 1.4-(i)]{quittner2012symmetry}
 Let $p, q, r, s \geq 0$ and $N\geq 3$.
Assume $p - s = q - r \geq 0.$ Then any nonnegative classical solution $(u, v)$ of
\begin{equation}\label{eq:20220611-1}
\left\{
\begin{array}{ll}
-\Delta u= u^rv^p~~\hbox{in}~\R^N, \\
-\Delta v=v^su^q~~\hbox{in}~\R^N.
 \end{array}
\right.
\end{equation}
 satisfies $u \geq v$ or $v \geq u$.
\el

\bl\label{lemma:20220601-l1} Let $u\geq0$ and $v\geq0$
be classical solution to
\begin{equation*}
\left\{
\begin{array}{ll}
-\Delta u= vu~~\hbox{in}~\R^3, \\
-\Delta v=u^2~~\hbox{in}~\R^3.
 \end{array}
\right.
\end{equation*}
Then $u\equiv0$. If we further assume $\lim\limits_{|x|\to+\infty}v(x)=0$, then $v\equiv0$.
\el
\begin{proof}
 By using Lemma \ref{lemma:20220611-l1}, let $r=1$, $p=1$, $s=0$ and $q=2$, we get $u\geq v$ for all $x\in\R^3$ or $v\geq u$ for all $x\in\R^3$.

 Case $u\geq v$. In this case, we have $-\Delta v\geq v^2~\hbox{in}~\R^3$,
 then by Lemma \ref{lemma:20220610-l1} we get $v\equiv0$, and obviously $u\equiv0$.

 Case $v\geq u$. In this case, we have $-\Delta u\geq u^2~\hbox{in}~\R^3$,
 then by Lemma \ref{lemma:20220610-l1} we get $u\equiv0$. Thus $-\Delta v=0$. Thus, by $\lim\limits_{|x|\to+\infty}v(x)=0$, we get $v\equiv0$.
\end{proof}

\vskip4mm
{\section{ Uniform estimates and asymptotics } }
 \setcounter{equation}{0}

\vskip4mm
\subsection{A priori bounds and compactness results}

Let $u_\lambda\in H^1(\R^3)$ be the ground state solution for \eqref{eq:20220420-e1}. Then $(u_\lambda, v_\lambda)\in H^1(\R^3)\times \dot{H}^1(\R^3)$ is the ground state solution for \eqref{eq:20220603-1},
where $v_\lambda=I_2\star u_\lambda^2$.
\bl\label{2023412-l1} For $\lambda\in(0, \Lambda)$, then $\int_{\R^3} |\nabla u_\lambda|^2+|u_\lambda|^2 dx\leq C(\Lambda)$.
\el
\bp First, by Corollary \ref{MP-20220607}, we have
$
c_{\lambda}\leq c_{\Lambda}.
$
If $2<q\leq4$, we have from
\begin{equation}\nonumber
\aligned
qc_\lambda&=qJ_\lambda(u_\lambda)-\langle J'_\lambda(u_\lambda),u_\lambda\rangle \\
&=\left(\frac{q}{2}-1\right)\int_{\R^3}|\nabla u_\lambda|^2+\lambda|u_\lambda|^2 dx+\left(1-\frac{q}{4}\right)\int_{\R^3}\left(I_2\star u_\lambda^2\right)u_\lambda^2 dx
\endaligned
\end{equation}
that $\{u_\lambda\}$ is bounded in $H^1(\R^3)$.
If $4<q<6$, we have from
\begin{equation}\nonumber
\aligned
4c_\lambda&=4J_\lambda(u_\lambda)-\langle J'_\lambda(u_n),u_n\rangle \\
&=\int_{\R^3}|\nabla u_\lambda|^2+\lambda|u_\lambda|^2 dx+\left(1-\frac{4}{q}\right)\int_{\R^3}u_\lambda^q dx
\endaligned
\end{equation}
that $\{u_\lambda\}$ is also bounded in $H^1(\R^3)$.
Thus, the conclusion holds.
\ep

\bl\label{20230122-l1} For $\lambda\in(0, \Lambda)$, then $\int_{\R^3} |\nabla v_\lambda|^2 dx\leq C(\Lambda)$.
\el
\bp
By Corollary \ref{MP-20220607}, we have
\begin{equation}\label{e:20230122-a2}
\|\nabla v_{\lambda}\|_2^2=\int_{\R^3}(-\Delta v_{\lambda})v_{\lambda} dx=\int_{\R^3} u_\lambda^2 v_\lambda dx=\int_{\R^3}\left(I_2\star u_\lambda^2\right) u_\lambda^2  dx\leq 6c_{\lambda}\leq 6c_{\Lambda}.
\end{equation}
Thus, the conclusion holds.
\ep

\bl\label{lemma:prior-estimate1}
For any $M>0$, there exists $ C(M)>0$ such that, for any non-negative solution $(u,v)\in H^1(\R^3)\times\dot{H}^1(\R^3) $ to \eqref{eq:20220603-1} with $\lambda\in (0,M)$,
$$\max_{x\in \R^3}u(x)\leq C(M)\quad\text{and}\quad\max_{x\in \R^3}v(x)\leq C(M).$$
\el
\begin{proof}
We  proceed  by contradiction, assuming there exists a sequence
$(\lambda_n, u_n, v_n) \in (0, M) \times H^1(\mathbb{R}^3)\times \dot{H}^1(\R^3)$ where $(u_n,v_n)\in H^1(\R^3)\times\dot{H}^1(\R^3) $ to \eqref{eq:20220603-1}
with $\lambda=\lambda_n$ and
\begin{align*}
\max_{x\in \R^3}u_n(x)\rightarrow +\infty\quad\text{or}\quad\max_{x\in \R^3}v_n(x)\rightarrow +\infty,\quad \mbox{as}~ n\rightarrow +\infty.
\end{align*}
By Proposition \ref{p2.2}, we see that $u_n$ and $v_n$ are positive radial decreasing functions, and
$$u_n(0)=\max_{x\in \R^3}u_n(x)\quad \text{and}~~v_n(0)=\max_{x\in \R^3}v_n(x).$$
We follow a blow up procedure introduced by Gidas and Spruck \cite{MR615628}. Let
$$M_n:=u_n(0)+v_n(0)\rightarrow +\infty,\quad \mbox{as} \, n\rightarrow +\infty.$$
Consider $\tilde{u}_n(y)=\frac{1}{M_n}u_n(M_n^\sigma y)$ and $\tilde{v}_n(y)=\frac{1}{M_n}v_n(M_n^\sigma y)$, by a direct computation,
\begin{equation}\label{eq:20220420-e3}
\left\{
\begin{array}{ll}
\aligned
&-\Delta \tilde{u}_n=M_{n}^{1+2\sigma}\tilde{v}_n\tilde{u}_n+M_n^{q-2+2\sigma}\tilde{u}_n^{q-1}-\lambda_n M_{n}^{2\sigma}\tilde{u}_n \quad \text{in}~\R^3, \\
&-\Delta \tilde{v}_n=M_{n}^{1+2\sigma}\tilde{u}_n^2.
\endaligned
\end{array}
\right.
\end{equation}
Note that $\|\tilde{u}_n\|_\infty\leq1$, $\|\tilde{v}_n\|_\infty\leq1$ and $\lambda_n\in(0,M)$, we consider the limit of \eqref{eq:20220420-e3} for two cases.

{\bf Case:} $3<q<6$, we take $\sigma=-\frac{q-2}{2}<0$, then $1+2\sigma=3-q<0$ and $q-2+2\sigma=0$.

One can see that the right hand side of \eqref{eq:20220420-e3} is uniformly $L^\infty(\R^3)$-bounded. Applying a standard elliptic estimate, and passing to a subsequence if necessary, we may assume that
$$\tilde{u}_n\rightarrow \tilde{u},\quad \tilde{v}_n\rightarrow \tilde{v} \quad \text{in}~~ C_{loc}^{2}(\R^3),$$
where $\tilde{u}$ is a non-negative bounded radial solution to
\begin{equation*}
\left\{
\begin{array}{ll}
\aligned
&-\Delta \tilde{u}=\tilde{u}^{q-1}~\hbox{in}~\R^3,\\
&-\Delta \tilde{v}=0.
\endaligned
\end{array}
\right.
\end{equation*}
Lemma \ref{lemma:20220603-l1} implies $\tilde{u}\equiv0$.

Next, we show $\tilde{v}\equiv0$. Since $\tilde{v}_n\rightarrow \tilde{v}$ in $C_{loc}^{2}(\R^3)$ cannot guarantee $\lim\limits_{|x|\to+\infty}\tilde{v}(x)=0$, we cannot obtain from $-\Delta \tilde{v}=0$ that $\tilde{v}\equiv0$ directly. However, from Lemma \ref{20230122-l1}, $v_n$ is bounded in $\dot{H}^1(\R^3)$. Then by
$$\|\tilde{v}_n\|_{\dot{H}^1}=M_n^{-\frac{\sigma}{2}-1}\|v_n\|_{\dot{H}^1},$$
and $-\frac{\sigma}{2}-1=\frac{q-6}{4}<0$, $\tilde{v}_n$ is bounded in $\dot{H}^1(\R^3)$. Up to subsequence, we can assume that
$$\tilde{v}_n \rightharpoonup \bar{v}~~ \text{in}~~\dot{H}^1(\R^3);\quad \tilde{v}_n(x)\to \bar{v}(x)~~ \text{a.e.}~~\R^3.$$
By the uniqueness of limits, we have $\tilde{v}=\bar{v}\in \dot{H}^1(\R^3)$.
Then by $-\Delta \tilde{v}=0$, we get $\tilde{v}\equiv0$.

Therefore, $\tilde{u}=\tilde{v}\equiv0$, which contradict with $\|\tilde{u}\|_\infty+\|\tilde{v}\|_\infty=1$.

{\bf Case:} $2<q<3$. We take $\sigma=-\frac{1}{2}$, then $1+2\sigma=0$ and $q-2+2\sigma<0$.
 Since
$$\|\tilde{v}_n\|_{\dot{H}^1}=M_n^{-\frac{\sigma}{2}-1}\|v_n\|_{\dot{H}^1},$$
and $-\frac{\sigma}{2}-1=-\frac{3}{4}<0$.
So applying a similar argument in Case $3<q<6$, we may assume that
$$\tilde{u}_n\rightarrow \tilde{u},\quad \tilde{v}_n\rightarrow \tilde{v} \quad \text{in}~~ C_{loc}^{2}(\R^3),$$
where $(\tilde{u},\tilde{v})$ is a nontrivial and non-negative bounded  radial solution to
\begin{equation*}
\left\{
\begin{array}{ll}
-\Delta \tilde{u}=\tilde{v}\tilde{u}~~\hbox{in}~\R^3, \\
-\Delta \tilde{v}=\tilde{u}^2~~\hbox{in}~\R^3,
 \end{array}
\right.
\end{equation*}
and $\lim\limits_{|x|\to+\infty}\tilde{v}(x)=0$, also a contradiction to  Lemma \ref{lemma:20220601-l1}.
\end{proof}

Now, we define the set
\begin{equation*}
\mathcal{U}^{\Lambda_2}_{\Lambda_1}:= \{u\in H^1_r(\R^3) : u~\text{is~a~ground~state~solution~to}~\eqref{eq:20220420-e1}~\text{with}~\lambda\in [\Lambda_1, \Lambda_2]\},
\end{equation*}
 In view of Corollary \ref{2023411-cro1} and Proposition \ref{p2.2}, $\mathcal{U}^{\Lambda_2}_{\Lambda_1}\subset C_{r,0}(\R^3)$.

\begin{lemma}\label{20230413-l1} Let $0 < \Lambda_1 \leq \Lambda_2 < + \infty$. Then $\mathcal{U}_{\Lambda_1}^{\Lambda_2}$ is compact in $C_{r,0}(\R^3)$.
\end{lemma}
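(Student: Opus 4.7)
The plan is to establish sequential compactness: given any sequence $\{u_n\}\subset \mathcal{U}_{\Lambda_1}^{\Lambda_2}$ with associated parameters $\lambda_n\in[\Lambda_1,\Lambda_2]$, I extract a subsequence converging in $C_{r,0}(\R^3)$ and identify the limit as an element of $\mathcal{U}_{\Lambda_1}^{\Lambda_2}$. Set $v_n:=I_2\star u_n^2$; by compactness of $[\Lambda_1,\Lambda_2]$ I may assume $\lambda_n\to \lambda_*\in[\Lambda_1,\Lambda_2]$. Lemmas \ref{2023412-l1}, \ref{20230122-l1} and \ref{lemma:prior-estimate1} give uniform bounds on $\{u_n\}$ in $H^1(\R^3)\cap L^\infty(\R^3)$ and on $\{v_n\}$ in $\dot H^1(\R^3)\cap L^\infty(\R^3)$. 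Since each $u_n$ is radial and decreasing, the elementary estimate $u_n(r)^2|B_r|\le\|u_n\|_2^2$ yields $u_n(x)\le C|x|^{-3/2}$. For the Hartree potential I apply Newton's theorem, exactly as pointed out in Remark~1.4:
\begin{equation*}
v_n(x)=\frac{1}{4\pi|x|}\int_{|y|\le|x|}u_n^2(y)\,dy+\frac{1}{4\pi}\int_{|y|\ge|x|}\frac{u_n^2(y)}{|y|}\,dy\le\frac{\|u_n\|_2^2}{2\pi|x|}.
\end{equation*}
Thus both $u_n$ and $v_n$ decay to $0$ as $|x|\to\infty$ uniformly in $n$.

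Next, the equations $-\Delta u_n+\lambda_n u_n=v_n u_n+u_n^{q-1}$ and $-\Delta v_n=u_n^2$ both have right-hand sides uniformly bounded in $L^\infty$, so interior $W^{2,p}$ and Schauder estimates (as in the bootstrap used for Corollary \ref{2023411-cro1}) give uniform $C^{1,\sigma}_{\mathrm{loc}}(\R^3)$ bounds. By Arzel\`a--Ascoli and a diagonal extraction, after relabelling $u_n\to u_*$ and $v_n\to v_*$ in $C^1_{\mathrm{loc}}(\R^3)$ for some radial, non-negative $u_*,v_*\in C^1(\R^3)$. Combining this local uniform convergence with the uniform decay of the previous paragraph promotes the convergence to uniform convergence on all of $\R^3$, i.e.\ to convergence in $C_{r,0}(\R^3)$. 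Passing to the limit in both equations, $(u_*,v_*)$ solves \eqref{eq:20220603-1} with $\lambda=\lambda_*$.

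It remains to verify $u_*\in\mathcal{U}_{\Lambda_1}^{\Lambda_2}$, i.e.\ $u_*\not\equiv 0$ and $J_{\lambda_*}(u_*)=c_{\lambda_*}$. By Corollary \ref{MP-20220607} and monotonicity of $c_\lambda$,
\begin{equation*}
c_{\Lambda_1}\le c_{\lambda_n}=\frac{1}{3}\|\nabla u_n\|_2^2+\frac{1}{6}\int_{\R^3}(I_2\star u_n^2)u_n^2\,dx,
\end{equation*}
so $\|\nabla u_n\|_2^2\ge 3c_{\Lambda_1}>0$. If $u_*\equiv 0$, the compact radial embedding $H^1_r(\R^3)\hookrightarrow L^s(\R^3)$ for $s\in(2,6)$ together with H-L-S force $\int u_n^q\,dx\to 0$ and $\int(I_2\star u_n^2)u_n^2\,dx\to 0$; the Nehari identity $\|\nabla u_n\|_2^2+\lambda_n\|u_n\|_2^2=\int(I_2\star u_n^2)u_n^2\,dx+\int u_n^q\,dx$ then gives $\|\nabla u_n\|_2\to 0$, contradicting the lower bound. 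Hence $u_*\not\equiv 0$ and $J_{\lambda_*}(u_*)\ge c_{\lambda_*}$. Passing Nehari to the limit, together with the weak lower semicontinuity of the $H^1$ norm, also upgrades the weak $H^1$-convergence to strong $H^1$-convergence, so $c_{\lambda_n}\to J_{\lambda_*}(u_*)$; using the (one-sided) continuity of $\lambda\mapsto c_\lambda$ at $\lambda_*$ extracted from its monotonicity and variational characterisation, one obtains $J_{\lambda_*}(u_*)=c_{\lambda_*}$.

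The main obstacle is the final energy identification: ruling out a strict energy drop in the limit. This requires strong $H^1$-convergence, which in turn depends on passing the nonlocal Hartree term through the limit---handled by the H-L-S inequality \eqref{20230418-ea1} and the compact radial embedding---and on continuity of the mountain-pass value $c_\lambda$ in $\lambda$, which is only stated as monotone in Corollary \ref{MP-20220607}. The uniform decay furnished by Newton's theorem is the key device that both upgrades local convergence to convergence in $C_{r,0}(\R^3)$ and prevents any loss of mass at infinity.
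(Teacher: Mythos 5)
Your proof is correct in substance, and at the decisive step it takes a genuinely different and more elementary route than the paper. Both arguments reduce matters to the same compactness criterion in $C_{r,0}(\R^3)$ (equicontinuity on bounded sets from uniform $C^{1,\sigma}$ or $C^2$ bounds, plus uniform decay at infinity), but you obtain the uniform decay directly: since each $u_n$ is radial and decreasing (Proposition \ref{p2.2}) and $\|u_n\|_{2}$ is bounded uniformly for $\lambda\in[\Lambda_1,\Lambda_2]$ (Lemma \ref{2023412-l1}), the Strauss-type bound $u_n(r)\le C r^{-3/2}$ is immediate, and Newton's theorem gives $v_n(x)\le C|x|^{-1}$. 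The paper instead argues by contradiction: it translates the origin to radii $r_n\to\infty$ where $u_n(r_n)\to\varepsilon$, passes to the one-dimensional autonomous limit equation $-\bar u''=-\lambda^*\bar u+\bar u^{q-1}$, and uses the first-integral identity together with the uniqueness of the Berestycki--Lions homoclinic to force $\bar u_-=0$, contradicting $\bar u_-\ge\varepsilon$. For the fixed range $[\Lambda_1,\Lambda_2]$ your shortcut is valid and considerably simpler; the paper's heavier ODE machinery has the advantage that the same scheme is reused later (e.g.\ Lemma \ref{lemma:20220423-l1}) for rescaled families where uniform $L^2$ control is not available and the Strauss bound would not apply.

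You also go beyond the paper by verifying that the $C_{r,0}$-limit is again a ground state (the paper's proof only checks the pre-compactness criterion and does not address closedness of $\mathcal{U}_{\Lambda_1}^{\Lambda_2}$). Two points in that extra part need tightening. First, $\|\nabla u_n\|_2^2\ge 3c_{\Lambda_1}$ does not follow directly from $c_{\lambda_n}\ge c_{\Lambda_1}$ and Corollary \ref{MP-20220607}, since only the combination $\frac13\|\nabla u_n\|_2^2+\frac16\int_{\R^3}(I_2\star u_n^2)u_n^2\,dx$ is bounded below; in your contradiction argument this is harmless because there the Hartree term tends to $0$, but it should be phrased that way. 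Second, continuity of $\lambda\mapsto c_\lambda$ cannot be ``extracted from monotonicity'': monotone functions can jump, and if $\lambda_n\downarrow\lambda_*$ you need upper semicontinuity from the right. This is true but requires a short separate argument, e.g.\ fix a path $\gamma\in\Gamma_{\lambda_*}$ with $\max_t J_{\lambda_*}(\gamma(t))\le c_{\lambda_*}+\varepsilon$ and $J_{\lambda_*}(\gamma(1))<0$, note $J_{\lambda}(\gamma(t))=J_{\lambda_*}(\gamma(t))+\frac{\lambda-\lambda_*}{2}\|\gamma(t)\|_2^2$ with $\sup_t\|\gamma(t)\|_2^2<\infty$, so $\gamma\in\Gamma_\lambda$ and $c_\lambda\le c_{\lambda_*}+\varepsilon+o(1)$ as $\lambda\downarrow\lambda_*$ (the case $\lambda_n\uparrow\lambda_*$ follows from monotonicity together with $J_{\lambda_*}(u_*)\ge c_{\lambda_*}$). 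With these two repairs your argument is complete.
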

\begin{proof}  Note that a bounded set $\mathcal{A}\subset C_{r,0}(\R^3)$ is pre-compact if and only if $\mathcal{A}$ is equi-continuous on bounded sets and decay uniformly at infinity.
By a standard regularity argument we can check that the set $\mathcal{U}_{\Lambda_1}^{\Lambda_2}$ is bounded in $C^2(\R^3)$.

Now, we prove that $\mathcal{U}_{\Lambda_1}^{\Lambda_2}$ is uniform decay. We argue by contradiction and assume that there exists a $\varepsilon>0$, that can be assumed as arbitrarily small, and sequences $\{u_n\}\subset C_{r,0}(\R^3)$ and $r_n\rightarrow +\infty$ such that $u_n(r_n)\to\varepsilon$ and $u_n$ solves \eqref{eq:20220420-e1} with $\lambda=\lambda_n$. Put $$I(u):=\frac{1}{4\pi}\int_{\R^3}\frac{u^2}{|x|}dx=\int_0^{+\infty}u(s)^2sds$$ and $$K(r,s):= s^2(\frac{1}{s}-\frac{1}{r})\geq 0,~~0<s<r.$$
By Newton's theorem \cite[(9.7.5)]{LiebLoss.2001}, for radially symmetric functions $u_n$, we can conveniently express $v_n$ in polar coordinates as
\begin{equation*}
\aligned
v_n=\frac{1}{4\pi}\int_{\R^3}\frac{u_n(y)^2}{|x-y|}dy=\int_0^r u_n(s)^2\frac{s^2}{r}ds + \int_r^\infty u_n(s)^2 s ds
=I(u_n)-\int_0^r K(r,s)u_n(s)^2 ds.
\endaligned
\end{equation*}
Then, $u_n$ solves
\beq\label{eq:20220423-e1}
-\left(u''_n(r)+\frac{2}{r}u'_n(r)\right)=\left(I(u_n)-\lambda_n\right) u_n(r)+u_n(r)^{q-1}-\left(\int_0^r K(r,s)u_n(s)^2 ds\right)u_n(r).
\eeq
Put $\bar{u}_n(r):=u_n(r+r_n)$, then by (\ref{eq:20220423-e1}) we get
\begin{align*}
-\left(\bar{u}''_n+\frac{2}{r+r_n} \bar{u}'_n\right)=&\left(I(u_n)-\lambda_n\right) \bar{u}_n(r) +\bar{u}_n(r)^{q-1}\\
&-\left(\int_{0}^{r+r_n}K(r+r_n,s)u_n(s)^2 ds\right)\bar{u}_n(r), \quad  r>-r_n.
\end{align*}
Passing to subsequences (still denoted by $\lambda_n$ and $\bar{u}_n$) and then taking the limits, we get that $\lambda_n\rightarrow \lambda^*>0$ and that $\{u_n\}$ converges to $\bar{u}$. Noting that $$v_n(r+r_n)=I(u_n)-\int_{0}^{r+r_n}K(r+r_n,s)u_n(s)^2 ds\rightarrow 0~~\text{ as}~~ n\rightarrow +\infty,\quad\text{for~any~}r>0,$$
 we obtain that $\bar{u}$ is a  nontrivial solution  of the following equation
\begin{equation}\label{eq:20211204-e1}
-\bar{u}''=-\lambda^* \bar{u}+\bar{u}^{q-1}~\hbox{in}~\mathbb{R}
\end{equation}
 with $\bar{u}(0)=\varepsilon, \bar{u}\geq 0$. By Proposition \ref{p2.2}, $\bar{u}_n(r)$ is decreasing in $[-r_n,+\infty)$ and thus $\bar{u}$ is bounded and decreasing in $\mathbb{R}$. Hence, $\bar{u}(r)$ has a limit $\bar{u}_+$ at $r=+\infty$ and a limit $\bar{u}_-$ at $r=-\infty$. In particular, $0\leq \bar{u}_+\leq \bar{u}(r)\leq \bar{u}_-<+\infty, \forall r\in \mathbb{R}$ and $\bar{u}_-\geq \bar{u}(0)=\varepsilon>0$.
Here $\bar{u}_\pm$ satisfies
$$-\lambda^* \bar{u}_\pm+\bar{u}_\pm^{q-1}=0.$$
Since $\lambda >0$ is bounded away from $0$, we have that $\lambda^*>0$. Taking $\varepsilon >0$ smaller if necessary, we can assume that $\bar{u}_+^{q-2}<\lambda^*$, which implies that $\bar{u}_+=0$.
Put $f(s):=-\lambda^* s+s^{q-1}$ and $F(s):=\int_0^s f(t)dt$.
 Noting that $\displaystyle \lim_{t\rightarrow +\infty}\bar{u}'(t)=0$ and $\displaystyle \lim_{t\rightarrow +\infty}F(\bar{u}(t))=0$,
we have that
\beq\label{eq:20211204-be2}
\frac{1}{2}\bar{u}'(r)^2=\int_{r}^{+\infty} -\bar{u}''(t)\bar{u}'(t)dt
=\int_{r}^{+\infty}f(\bar{u}(t))\bar{u}'(t)dt
=-F(\bar{u}(r)), \forall r\in \mathbb{R}.
\eeq
By \cite[Theorem 5]{Berestycki1983}, there exist a unique solution $w$ (up to a translation) to the following equation
\beq\label{eq:20211204-e2}
-w''=f(w)~\hbox{in}~\mathbb{R}, w\in C^2(\mathbb{R}),\lim_{r \rightarrow \pm \infty}w(r)=0  ~\hbox{and $w(r_0)>0$ for some $r_0\in \mathbb{R}$}.
\eeq
Without loss of generality, we suppose that $w(0)=\max_{r\in \mathbb{R}}w(r)$, then
$$\begin{cases}
w(r)=w(-r);\\
w(r)>0,r\in \mathbb{R};\\
w(0)=\xi_0;\\
w'(r)<0,r>0,
\end{cases}$$
where $\xi_0>0$ is determined by
$$\xi_0:=\inf\left\{\xi>0:F(\xi)=0\right\},$$
see \cite[Theorem 5]{Berestycki1983} again.
By our choice of $\varepsilon >0$, we see that $f(s)<0, s\in (0,\varepsilon]$, and thus $\varepsilon<\xi_0$.
So there exists some $r_0>0$ such that $w(r_0)=\varepsilon$. Now, we let $\tilde{w}(r):=w(r+r_0)$, then
\beq\label{eq:20211204-e3}
-\tilde{w}''=f(\tilde{w}) ~\hbox{in}~\mathbb{R},~\tilde{w}(0)=\varepsilon.
\eeq
Furthermore, noting that $\displaystyle\lim_{r\rightarrow +\infty}\tilde{w}(r)=0$, applying a similar argument as that in \eqref{eq:20211204-be2}, we conclude that
\beq\label{eq:20211204-e4}
\tilde{w}'(r)=\begin{cases}
-\sqrt{-2F(\tilde{w}(r))}, \forall r\geq -r_0,\\
\sqrt{-2F(\tilde{w}(r))}, \forall r< -r_0.
\end{cases}
\eeq
Hence, both $\bar{u}$ and $\tilde{w}$ solve
\beq\label{eq:20211204-e5}
\begin{cases}
-u''(r)=f(u(r))~\hbox{in}~\mathbb{R},\\
u(0)=\varepsilon,\\
u'(0)=-\sqrt{-2F(\varepsilon)}.
\end{cases}
\eeq
By the uniqueness of solutions of initial value problem, we conclude $\bar{u}\equiv \tilde{w}$ in $\mathbb{R}$. Thus,
$$\bar{u}_-=\lim_{r\rightarrow -\infty}\bar{u}(r)=\lim_{r\rightarrow -\infty}\tilde{w}(r)=0,$$
a contradiction to $\bar{u}_{-} \geq\varepsilon>0$.
\end{proof}

\begin{lemma}\label{2023913-l5} Let $0 < \Lambda_1 \leq \Lambda_2 < + \infty$. Then the set $\mathcal{U}_{\Lambda_1}^{\Lambda_2}$ is compact in $H^1_r(\R^3)$.
\end{lemma}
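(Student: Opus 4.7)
The plan is to combine the $C_{r,0}(\R^3)$-compactness already established in Lemma \ref{20230413-l1} with the uniform $H^1$-bound from Lemma \ref{2023412-l1}, and then upgrade the convergence to $H^1$ via the Nehari-type identity satisfied by ground states. Let $\{u_n\}\subset \mathcal{U}_{\Lambda_1}^{\Lambda_2}$ with associated frequencies $\lambda_n\in[\Lambda_1,\Lambda_2]$. By Lemma \ref{20230413-l1}, after extracting a subsequence, $\lambda_n\to\lambda_0\in[\Lambda_1,\Lambda_2]$ and $u_n\to u$ in $C_{r,0}(\R^3)$. The $H^1_r$-boundedness from Lemma \ref{2023412-l1} then yields, up to a further subsequence, $u_n\rightharpoonup u$ weakly in $H^1_r(\R^3)$ (the weak $H^1$-limit must coincide with $u$ because the compact radial embedding $H^1_r\hookrightarrow L^p(\R^3)$ for $p\in(2,6)$ provides an a.e.\ subsequential limit that must agree with the uniform $C_{r,0}$-limit). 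In particular $u_n\to u$ strongly in $L^p(\R^3)$ for every $p\in(2,6)$.

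Testing \eqref{eq:20220420-e1} at $\lambda_n$ against $u_n$ produces the identity
$$\|\nabla u_n\|_2^2+\lambda_n\|u_n\|_2^2=\int_{\R^3}(I_2\star u_n^2)u_n^2\,dx+\|u_n\|_q^q.$$
The local term tends to $\|u\|_q^q$ by $L^q$-convergence. For the nonlocal term, the HLS estimate \eqref{20230418-ea1} applied to the algebraic identity $(I_2\star u_n^2)u_n^2-(I_2\star u^2)u^2=(I_2\star(u_n^2-u^2))u_n^2+(I_2\star u^2)(u_n^2-u^2)$, combined with $u_n\to u$ in $L^{12/5}$, gives $\int(I_2\star u_n^2)u_n^2\to\int(I_2\star u^2)u^2$. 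Standard distributional passage in \eqref{eq:20220420-e1} shows that $u$ solves the limit equation at $\lambda_0$ and thus satisfies the analogous identity, so
$$\|\nabla u_n\|_2^2+\lambda_n\|u_n\|_2^2\ \longrightarrow\ \|\nabla u\|_2^2+\lambda_0\|u\|_2^2.$$

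By weak lower semicontinuity, $\|\nabla u\|_2^2\leq\liminf\|\nabla u_n\|_2^2$ and $\|u\|_2^2\leq\liminf\|u_n\|_2^2$. Extracting a subsequence along which each squared norm converges individually, the additive limit above together with $\lambda_0\geq\Lambda_1>0$ forces equality in both inequalities; hence $\|u_n\|_{H^1}\to\|u\|_{H^1}$. Combined with weak convergence, this yields $u_n\to u$ strongly in $H^1_r(\R^3)$. If one had $u\equiv 0$, strong convergence would give $J_{\lambda_n}(u_n)\to 0$, contradicting $c_{\lambda_n}=J_{\lambda_n}(u_n)\geq c_{\Lambda_1}>0$ from Corollary \ref{MP-20220607}; thus $u\not\equiv 0$, and a direct passage to the limit in $J_{\lambda_n}(u_n)=c_{\lambda_n}$ together with monotonicity of $\lambda\mapsto c_\lambda$ identifies $u$ as a ground state at level $\lambda_0\in[\Lambda_1,\Lambda_2]$.

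The delicate point is that neither the Strauss radial bound $|u_n(x)|\lesssim|x|^{-1}$ nor the uniform $C_{r,0}$-decay provides $L^2$-tightness on its own, so dominated convergence in $L^2$ is not directly available. The Nehari identity sidesteps this: the HLS-controlled nonlocal term and the subcritical $L^q$ term both pass to the limit by radial compactness, forcing the single additive quantity $\|\nabla u_n\|_2^2+\lambda_n\|u_n\|_2^2$ to have a limit, after which weak lower semicontinuity acting separately on the gradient and on the mass, tied together by $\lambda_0>0$, pins down each norm individually.
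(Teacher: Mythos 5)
Your proposal is correct and follows essentially the same route as the paper: extract a weak $H^1_r$ limit, pass to the limit in the Nehari identity using the compact radial embedding and the HLS estimate for the nonlocal term, and then upgrade weak to strong convergence since $\|\nabla u_n\|_2^2+\lambda_n\|u_n\|_2^2$ converges to the corresponding quantity for the limit. Your additional details (the weak lower semicontinuity splitting forced by $\lambda_0\geq\Lambda_1>0$, and the nontriviality of the limit via $c_{\lambda_n}\geq c_{\Lambda_1}>0$) merely flesh out steps the paper leaves implicit.
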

\bp
By Lemma \ref{2023412-l1}, $\{u_n\}$ is bounded in $H^1(\R^3)$.
Now, for any sequence
$\{u_n\} \subset \mathcal{U}_{\Lambda_1}^{\Lambda_2} $, we may assume that $u_n \rightharpoonup u$ in $H^1(\R^3)$ and $\lambda_n \to \lambda^*\in [\Lambda_1, \Lambda_2]$.
By the continuity of the Riesz potential $I_2$, we have
$$(I_2\star u_n^2)u_n \rightharpoonup (I_2\star u^2)u~~\text{in}~H^1(\R^3).$$
In particular, $u$ is a positive radial solution to
\begin{equation}\label{20230413-e1}
-\Delta u+\lambda^* u=\left(I_2\star u^2\right)u+u^{q-1}.
\end{equation}
By compact embedding $H^1_r(\R^3)\hookrightarrow L^q(\R^3)$, we have
$$\int_{\R^3}|u_n|^q dx\to \int_{\R^3}|u|^q dx .$$
And it is standard to show that
$$\int_{\R^3}\left(I_2\star u_n^2\right)u_n^2 dx\to \int_{\R^3}\left(I_2\star u^2\right)u^2 dx .$$
Therefore, using equations \eqref{eq:20220420-e1} and \eqref{20230413-e1},
\begin{equation*}
\aligned
&\|\nabla u_n\|^2_{2}+\lambda_n\|u_n\|^2_{2}=\int_{\R^3}\left(I_2\star u_n^2\right)u_n^2 dx+\int_{\R^3}|u_n|^q dx  \\
\to &\int_{\R^3}\left(I_2\star u^2\right)u^2 dx+\int_{\R^3}|u|^q dx
=\|\nabla u\|^2_{2}+\lambda^*\|u\|^2_{2},
\endaligned
\end{equation*}
which implies that $u_n\to u$ in $H^1_r(\R^3)$. That is, $\mathcal{U}_{\Lambda_1}^{\Lambda_2}$ is compact in $H^1_r(\R^3)$.
\ep

\vskip4mm
\subsection{ Asymptotic behaviors of ground state solutions
for $\lambda\to 0^+$}

Let $u$ be a positive solution for \eqref{eq:20220420-e1}, we can see that the scaling
 function $\tilde{u}(x):=\lambda^{-\frac{1}{q-2}}u\left(\lambda^{-\frac{1}{2}}x\right)$ satisfies
\beq\label{2023416-e2}
-\Delta u+ u=\mu\left(I_2\star u^{2}\right)u+u^{q-1}
\eeq
with $\mu=\lambda^{-2\frac{q-3}{q-2}}$, and the scaling
 function $\bar{u}(x):=\lambda^{-1}u\left(\lambda^{-\frac{1}{2}}x\right)$ satisfies
 \beq\label{2023416-e1}
-\Delta u+ u=\left(I_2\star u^{2}\right)u+\nu u^{q-1}
\eeq
with $\nu=\lambda^{q-3}$, respectively.
S. Ma and V. Moroz \cite{ma2023asymptotic} obtained the following propositions.
\begin{proposition}\label{2023417-p1} (\cite[Theorem 2.5]{ma2023asymptotic}) Let $u_\mu$ be the radial ground state of
\eqref{2023416-e2},
 then for any sequence $\mu\to0^+$, there exists a subsequence such that
$u_\mu$ converges in $H^1(\R^3)$ to the solution $W\in H^1(\R^3)$ of the Schr\"{o}dinger equation \eqref{20220609-V1}.
\end{proposition}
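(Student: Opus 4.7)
My plan is to establish the convergence by a standard compactness/limit argument tailored to the scaling (\ref{2023416-e2}): obtain uniform $H^1$ bounds, pass to a weak limit, show the nonlocal term drops out, and finally upgrade the convergence from weak to strong by a norm-equality argument, using Kwong's uniqueness theorem to identify the limit as $W$.

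First I would derive uniform estimates. The energy associated with (\ref{2023416-e2}) is $J_\mu(u) = \tfrac12\|\nabla u\|_2^2+\tfrac12\|u\|_2^2-\tfrac{\mu}{4}\int(I_2\star u^2)u^2\,dx-\tfrac1q\|u\|_q^q$, and by the mountain-pass/Pohozaev argument analogous to Corollary \ref{MP-20220607}, one has
\[
c_\mu = J_\mu(u_\mu) = \tfrac13\|\nabla u_\mu\|_2^2+\tfrac{\mu}{6}\int(I_2\star u_\mu^2)u_\mu^2\,dx.
\]
Comparing with a test path built from Kwong's solution $W$ of (\ref{20220609-V1}) gives $c_\mu \le c_0 + o(1)$ as $\mu\to 0^+$, hence an upper bound on $\|\nabla u_\mu\|_2$. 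Then Nehari (plugging $u_\mu$ into $\langle J'_\mu(u_\mu),u_\mu\rangle=0$) together with the Pohozaev identity yields a uniform bound on $\|u_\mu\|_{H^1}$, via the same dichotomy between $q\le 4$ and $q>4$ that is used in the proof of Lemma \ref{2023412-l1}. Moreover, using the Nehari identity and the Sobolev/HLS inequalities (\ref{20230418-ea1}), one extracts a uniform lower bound $\|u_\mu\|_q \ge \delta > 0$, which will prevent vanishing.

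Next I would pass to a subsequence $u_\mu \rightharpoonup W$ weakly in $H^1_r(\R^3)$. Since the embedding $H^1_r(\R^3)\hookrightarrow L^q(\R^3)$ is compact for $q\in(2,6)$, $u_\mu \to W$ strongly in $L^q$; in particular $\|W\|_q \ge \delta > 0$, so $W\not\equiv 0$. By H-L-S and the uniform $H^1$ bound, $(I_2\star u_\mu^2)u_\mu$ stays bounded in a suitable dual space, and the prefactor $\mu$ forces $\mu(I_2\star u_\mu^2)u_\mu \to 0$ when tested against any $\varphi \in C_c^\infty(\R^3)$. Passing to the limit in the weak formulation of (\ref{2023416-e2}) shows that $W\in H^1_r(\R^3)$ is a nontrivial nonnegative radial solution of (\ref{20220609-V1}). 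By Kwong's uniqueness theorem \cite{MR969899}, $W$ is the unique positive radial solution of (\ref{20220609-V1}), so the whole sequence $u_\mu$ (not merely a subsequence, up to identification) converges to $W$.

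Finally I would upgrade weak convergence to strong convergence in $H^1(\R^3)$. Testing (\ref{2023416-e2}) by $u_\mu$ and (\ref{20220609-V1}) by $W$ and taking differences, one gets
\[
\|\nabla u_\mu\|_2^2+\|u_\mu\|_2^2 = \mu\int(I_2\star u_\mu^2)u_\mu^2\,dx+\|u_\mu\|_q^q \;\longrightarrow\; \|W\|_q^q = \|\nabla W\|_2^2+\|W\|_2^2,
\]
where the nonlocal term vanishes since it is uniformly bounded and $\mu\to 0^+$, and $\|u_\mu\|_q^q \to \|W\|_q^q$ by compact embedding. Combined with the weak convergence $u_\mu \rightharpoonup W$ in $H^1(\R^3)$, this convergence of norms yields $u_\mu\to W$ strongly in $H^1(\R^3)$, as required. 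The main obstacle I anticipate is the non-vanishing step: one must rule out $W\equiv 0$, which requires the Nehari-type lower bound on $\|u_\mu\|_q$ uniform in $\mu$; this is where controlling the Hartree term $\mu(I_2\star u_\mu^2)u_\mu^2$ from above independently of $\mu$ is essential, and where the explicit structure of the ground-state identity (\ref{eq:20220606-c}) plays the decisive role.
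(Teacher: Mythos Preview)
The paper does not prove this proposition at all: it is quoted as \cite[Theorem 2.5]{ma2023asymptotic} and then used as a black box in the proof of Theorem~\ref{th:20220608}. So there is no ``paper's own proof'' to compare against.

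Your sketch is correct and is the standard route one would expect Ma--Moroz to follow: uniform $H^1$ bounds from the mountain-pass/Pohozaev structure, radial compactness, the $\mu$-prefactor killing the Hartree term in the limit equation, Kwong's uniqueness to pin down the limit, and Nehari-based norm convergence to upgrade weak to strong. Two small simplifications: (a) the bound $c_\mu\le c_0$ is immediate rather than $c_0+o(1)$, since $J_\mu\le J_0$ pointwise (the Hartree term enters with a minus sign), so any admissible path for $J_0$ is admissible for $J_\mu$; (b) for non-vanishing, once you know $\|u_\mu\|_{H^1}\le M$ and (from the identity $c_\mu=\tfrac13\|\nabla u_\mu\|_2^2+\tfrac{\mu}{6}\int(I_2\star u_\mu^2)u_\mu^2$) that $\mu\int(I_2\star u_\mu^2)u_\mu^2\le 6c_0$, the Nehari identity together with $1\le C\mu\|u_\mu\|_{H^1}^2+C\|u_\mu\|_{H^1}^{q-2}$ gives $\|u_\mu\|_{H^1}\ge\delta_0>0$ for small $\mu$, and then $\|u_\mu\|_q^q=\|u_\mu\|_{H^1}^2-\mu\int(\cdots)\ge\delta_0^2-o(1)$, so $W\not\equiv0$.
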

\begin{proposition}\label{2023417-p2} (\cite[Theorem 2.4]{ma2023asymptotic}) Let $u_\nu$ be the radial ground state of \eqref{2023416-e1},
 then for any sequence $\nu\to0^+$, there exists a subsequence such that
$u_\nu$ converges in $H^1(\R^3)$ to the solution $U\in H^1(\R^3)$ of the Schr\"{o}dinger-Newton equation \eqref{20220609-VU3}.
\end{proposition}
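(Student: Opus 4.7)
The strategy has three stages: (i) uniform $H^1$ bounds on the family $\{u_\nu\}_{0<\nu\leq 1}$, (ii) extraction of a weakly convergent subsequence and identification of the limit via passage to the limit in the equation, and (iii) upgrading weak to strong convergence. Throughout, radial symmetry of $u_\nu$ (Proposition \ref{p2.2} adapted to \eqref{2023416-e1}) is a cornerstone, since it allows the use of the compact embedding $H^1_r(\R^3)\hookrightarrow L^p(\R^3)$ for every $p\in(2,6)$.

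First I would establish the a priori bound. The mountain--pass characterization of $c_\nu$ is monotone in the coefficient $\nu$ (a larger nonlinearity lowers the mountain--pass value), so for $\nu\in(0,1]$ one has $c_\nu\leq c_1<+\infty$. Combining the Nehari identity $\langle J'_\nu(u_\nu),u_\nu\rangle=0$ with the Pohozaev identity \eqref{eq:20220606-p} (both adapted to \eqref{2023416-e1}) yields a linear system in the four nonnegative quantities $\|\nabla u_\nu\|_2^2$, $\|u_\nu\|_2^2$, $\int(I_2\star u_\nu^2)u_\nu^2$, $\nu\int u_\nu^q$ controlled by $c_\nu$. Solving it gives a uniform $H^1$ bound on $\{u_\nu\}$ and, simultaneously, a uniform upper bound on the Hartree energy $\int(I_2\star u_\nu^2)u_\nu^2$.

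Next, I extract a radial subsequence with $u_\nu\rightharpoonup U$ in $H^1(\R^3)$, $u_\nu\to U$ in $L^p(\R^3)$ for every $p\in(2,6)$, and a.e. in $\R^3$. Since $\tfrac{12}{5}\in(2,6)$, the Hardy--Littlewood--Sobolev inequality combined with $L^{12/5}$-convergence yields $\int(I_2\star u_\nu^2)u_\nu\varphi\to\int(I_2\star U^2)U\varphi$ for every $\varphi\in C_c^\infty(\R^3)$. The local term satisfies $\nu\int u_\nu^{q-1}\varphi\to 0$ since $\nu\to 0$ and $\{u_\nu\}$ is bounded in $L^q$. Thus $U$ is a weak (hence by regularity a classical) nonnegative radial solution of the Schr\"odinger--Newton equation \eqref{20220609-VU3}.

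The crucial step is showing $U\not\equiv 0$. I would argue that $c_\nu$ stays bounded below away from zero: indeed, the Nehari manifold for $J_\nu$ at $\nu\leq 1$ lies outside a uniform $H^1$-ball around the origin, since by HLS, Sobolev, and the uniform bound $\nu\leq 1$, one has $\|u\|_{H^1}^2\leq C\bigl(\|u\|_{H^1}^4+\|u\|_{H^1}^q\bigr)$ on the Nehari set; hence $\|u_\nu\|_{H^1}\geq\delta>0$. Combined with the identity $c_\nu=\tfrac{1}{3}\|\nabla u_\nu\|_2^2+\tfrac{1}{6}\int(I_2\star u_\nu^2)u_\nu^2$, this yields $\liminf_\nu\int(I_2\star u_\nu^2)u_\nu^2>0$; together with $L^{12/5}$-convergence this forces $U\not\equiv 0$. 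By the uniqueness result of Lieb and Tod--Moroz quoted in the introduction, $U$ is then (after translation) the unique positive radial ground state of \eqref{20220609-VU3}. Finally, testing the limit equation by $U$ and the original equation by $u_\nu$, then comparing, gives $\|\nabla u_\nu\|_2^2+\|u_\nu\|_2^2\to\|\nabla U\|_2^2+\|U\|_2^2$, so weak convergence upgrades to strong $H^1$-convergence.

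\textbf{Main obstacle.} The delicate point is the non-triviality of the weak limit $U$. Because the Hartree term is nonlocal and its critical scaling is the Coulomb--Sobolev exponent $q=3$, one cannot use standard scaling arguments uniformly in $q\in(2,3)\cup(3,6)$; the Nehari lower bound argument above has to be done carefully to be $q$-uniform and to survive $\nu\to 0^+$. Once non-triviality is in hand, the strong-convergence upgrade is essentially bookkeeping.
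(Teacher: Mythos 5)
The paper does not prove this proposition at all: it is quoted verbatim from Ma--Moroz \cite[Theorem 2.4]{ma2023asymptotic}, so there is no internal proof to compare against. Your blind proposal is a self-contained variational argument, and in outline it is sound: uniform bounds from the Nehari/Pohozaev identities, radial compactness $H^1_r(\R^3)\hookrightarrow L^p(\R^3)$ for $p\in(2,6)$ to pass to the limit in the Hartree term, vanishing of the local term since $\nu\to0^+$, a uniform Nehari lower bound to exclude a trivial limit, uniqueness of positive solutions of \eqref{20220609-VU3} to identify $U$, and a norm-convergence bookkeeping step to upgrade to strong $H^1$ convergence; this last step is exactly the mechanism the paper itself uses in Lemma \ref{2023913-l5}.

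Two points in your write-up are stated incorrectly, though both are repairable with tools you already have. First, the monotonicity is backwards: since enlarging $\nu$ decreases $J_\nu$ pointwise, $c_\nu$ is non-increasing in $\nu$, so $c_\nu\leq c_1$ is the wrong comparison for $\nu\in(0,1]$; the uniform upper bound you need is $c_\nu\leq c_0<+\infty$, where $c_0$ is the mountain-pass level of the limit (Schr\"odinger--Newton) functional $J_0\geq J_\nu$, and any path admissible for $J_0$ is admissible for $J_\nu$. Second, the inference ``Nehari lower ball plus the identity $c_\nu=\tfrac13\|\nabla u_\nu\|_2^2+\tfrac16\int(I_2\star u_\nu^2)u_\nu^2$ yields $\liminf_\nu\int(I_2\star u_\nu^2)u_\nu^2>0$'' does not follow as written: that identity only gives the upper bound $\int(I_2\star u_\nu^2)u_\nu^2\leq 6c_\nu$. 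The correct route is the Nehari identity itself, $\|u_\nu\|_{H^1}^2=\int(I_2\star u_\nu^2)u_\nu^2+\nu\|u_\nu\|_q^q$, combined with your uniform bounds $\delta\leq\|u_\nu\|_{H^1}\leq M$ and $\nu\to0^+$, which gives $\int(I_2\star u_\nu^2)u_\nu^2\geq\delta^2-\nu CM^q\to\delta^2>0$; strong $L^{12/5}$ convergence then forces $U\not\equiv0$, and positivity plus radial monotonicity about the origin (so no translation is actually needed) identifies $U$ via the cited uniqueness results. With these two corrections your argument goes through and constitutes a legitimate alternative to simply invoking \cite{ma2023asymptotic}.
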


\bl\label{lemma:20220422-l1}
Let $(u_n, v_n)\in H^1(\R^3)\times \dot{H}^1(\R^3)$ be positive ground state solutions to \eqref{eq:20220603-1} with $\lambda=\lambda_n\rightarrow 0^+$. Then
$$\limsup_{n\rightarrow +\infty}\|u_n\|_\infty=0\quad \text{and}\quad \limsup_{n\rightarrow +\infty}\|v_n\|_\infty=0.$$
\el
\bp
Obviously, $u_n$ satisfy
\beq\nonumber\label{eq:20220422-e2}
\begin{cases}
-\Delta u_n+\lambda_n u_n=v_nu_n+u_n^{q-1},~~\hbox{in}~\R^3,\\
-\Delta v_n=u_n^2.
\end{cases}
\eeq
By Lemma \ref{lemma:prior-estimate1}, we have that
\beq\nonumber\label{eq:20220422-e1}
\sup_{n\in \mathbb{N}}\|u_n\|_\infty<+\infty\quad \text{and}\quad \sup_{n\in \mathbb{N}}\left\|v_n\right\|_\infty<+\infty.
\eeq
Applying a standard elliptic estimate, we may assume that $u_n\rightarrow u$ and $v_n\rightarrow v$ in $C_{loc}^{2}(\R^3)$, where $(u,v)$ is a nonnegative radial decreasing function, which solves
\beq\label{eq:20220422-e3}
\begin{cases}
-\Delta u=vu+u^{q-1}~\hbox{in}~\R^3,\\
-\Delta v=u^2.
\end{cases}
\eeq
By Lemma \ref{20230122-l1}, $v_n$ is bounded in $\dot{H}^1(\R^3)$. Then
$$v_n \rightharpoonup \bar{v}~~ \text{in}~~\dot{H}^1(\R^3);\quad v_n(x)\to \bar{v}(x)~~ \text{a.e.}~~\R^3.$$
By the uniqueness of limits, we have $v=\bar{v}\in \dot{H}^1(\R^3)$ and thus $v=I_2\star u^2$.
Then, by \eqref{eq:20220422-e3}
$$-\Delta u=\left(I_2\star u^2\right)u+u^{q-1}\geq \left(I_2\star u^2\right)u.$$
From Lemma \ref{lemma:20220420-l1}, $u\equiv0$. Then by $-\Delta v=0$ and $v\in \dot{H}^1(\R^3)$, we get
$v\equiv0$.
Then, the conclusion follows.
\ep

More precisely, we have the following result.
\bl\label{lemma:20220423-bl1}
Let $(u_n, v_n)\in H^1(\R^3)\times \dot{H}^1(\R^3)$ be positive ground state solutions to \eqref{eq:20220603-1} with $\lambda=\lambda_n\rightarrow 0^+$. Let $M_n=\|u_n\|_\infty+\|v_n\|_\infty$. Then
\begin{itemize}
\item[(i)] If $2<q<3$, then
\beq\nonumber\label{eq:20220422-e4}
\limsup_{n\rightarrow +\infty}\frac{M_n^{q-2}}{\lambda_n}<+\infty.
\eeq
\item[(ii)] If $3<q<6$, then
\beq\nonumber\label{eq:20220422-e5}
\limsup_{n\rightarrow +\infty}\frac{M_n}{\lambda_n}<+\infty.
\eeq
\end{itemize}
\el
\bp
Similar to Lemma \ref{lemma:prior-estimate1},
letting $\tilde{u}_n(y)=\frac{1}{M_n}u_n(M_n^\sigma y)$ and $\tilde{v}_n(y)=\frac{1}{M_n}v_n(M_n^\sigma y)$, then
\begin{equation}\label{eq:20220603-i1}
\left\{
\begin{array}{ll}
\aligned
&-\Delta \tilde{u}_n=M_{n}^{1+2\sigma}\tilde{v}_n\tilde{u}_n+M_n^{q-2+2\sigma}\tilde{u}_n^{q-1}-\lambda_n M_{n}^{2\sigma}\tilde{u}_n \quad \text{in}~\R^3, \\
&-\Delta \tilde{v}_n=M_{n}^{1+2\sigma}\tilde{u}_n^2.
\endaligned
\end{array}
\right.
\end{equation}
Since $\|\tilde{u}_n\|_\infty+\|\tilde{v}_n\|_\infty=1$, then up to a subsequence, we assume that
$$\tilde{u}_n\rightarrow \tilde{u},\quad \tilde{v}_n\rightarrow \tilde{v} \quad \text{in}~~ C_{loc}^{2}(\R^3).$$
Note that $\lambda_n\to0$ and $M_n\rightarrow 0$ as $n\rightarrow +\infty$ due to Lemma \ref{lemma:20220422-l1}, we discuss the limit equation of \eqref{eq:20220603-i1} for the following two cases.

{\bf Case:} $2<q<3$.
Take $\sigma=-\frac{q-2}{2}$ in (\ref{eq:20220603-i1}).
If
$$\displaystyle \limsup_{n\rightarrow +\infty}\frac{M_n^{q-2}}{\lambda_n}=+\infty,$$
then by a similar argument of Lemma \ref{lemma:prior-estimate1} (Case $3<q<6$),
 $\tilde{u}$ is a nontrivial nonnegative solution to
\beq\label{eq:20220422-e12}
-\Delta \tilde{u}=\tilde{u}^{q-1}~~ \hbox{in}~\R^3.
\eeq
In this case, $q-1\in(1,2)$, then we get a contradiction by using Lemma \ref{lemma:20220603-l1}.
\par
{\bf Case:} $3<q<6$.
Take $\sigma=-\frac{1}{2}$ in (\ref{eq:20220603-i1}). If
$$\limsup_{n\rightarrow +\infty}\frac{M_n}{\lambda_n}=+\infty, $$
then $(\tilde{u}, \tilde{v})$ is a nontrivial nonnegative function satisfying
\begin{equation*}
\left\{
\begin{array}{ll}
-\Delta \tilde{u}=\tilde{v}\tilde{u}~~\hbox{in}~\R^3, \\
-\Delta \tilde{v}=\tilde{u}^2~~\hbox{in}~\R^3.
 \end{array}
\right.
\end{equation*}
Note that
$$\|\tilde{v}_n\|_{\dot{H}^1}=M_n^{-\frac{\sigma}{2}-1}\|v_n\|_{\dot{H}^1},$$
and $-\frac{\sigma}{2}-1=-\frac{3}{4}<0$.
So applying a similar argument in Lemma \ref{lemma:prior-estimate1} (Case $2<q<3$), we also get a contradiction to  Lemma \ref{lemma:20220601-l1}.
\ep

\bl\label{lemma:20220610-bl1}
Let $(u_n, v_n)\in H^1(\R^3)\times \dot{H}^1(\R^3)$ be positive ground state solutions to \eqref{eq:20220603-1} with $\lambda=\lambda_n\rightarrow 0^+$. Let $M_n=\|u_n\|_\infty+\|v_n\|_\infty$. Then
\begin{itemize}
\item[(i)] If $2<q<3$, then
\beq\nonumber\label{eq:20220610-e4}
0<\liminf_{n\rightarrow +\infty}\frac{M_n^{q-2}}{\lambda_n}.
\eeq
\item[(ii)] If $3<q<6$, then
\beq\nonumber\label{eq:20220610-e5}
0<\liminf_{n\rightarrow +\infty}\frac{M_n}{\lambda_n}.
\eeq
\end{itemize}
\el
\bp
Setting
$$\bar{u}_n(x):=\frac{1}{u_n(0)}u_n\left(\frac{x}{\sqrt{\lambda_n}}\right),$$
we have $\bar{u}_n(0)=\|\bar{u}_n\|_\infty=1$ and
\beq\label{eq:20220422-e6}
\begin{cases}
-\Delta \bar{u}_n+\bar{u}_n=\lambda_n^{-1}v_n\bar{u}_n+\lambda_n^{-1}u_n(0)^{q-2}\bar{u}_n^{q-1}~\hbox{in}~\R^3,\\
-\Delta v_n=u^2_n.
\end{cases}
\eeq
Taking $x=0$, by maximum principle and \eqref{eq:20220422-e6},
\beq\label{eq:20220422-e7}
1=\bar{u}_n(0)\leq -\Delta \bar{u}_n(0)+\bar{u}_n(0)\leq \left(\frac{v_n(0)}{\lambda_n}+\frac{u_n(0)^{q-2}}{\lambda_n}\right).
\eeq
{\bf (i)} $2<q<3$. Note that $v_n(0)\rightarrow 0$ as $n\rightarrow +\infty$ due to Lemma \ref{lemma:20220422-l1}.
Then by (\ref{eq:20220422-e7}), for $n$ large enough we have
$$\frac{v_n(0)^{q-2}}{\lambda_n}+\frac{u_n(0)^{q-2}}{\lambda_n}
\geq\frac{v_n(0)}{\lambda_n}+\frac{u_n(0)^{q-2}}{\lambda_n}\geq1.$$
Thus
$$
\liminf_{n\rightarrow +\infty}\frac{M_n^{q-2}}{\lambda_n}
=\liminf_{n\rightarrow +\infty}\frac{\left(u_n(0)+v_n(0)\right)^{q-2}}{\lambda_n}>0.
$$
\\
{\bf (ii)} $3<q<6$. Note that $u_n(0)\rightarrow 0$ as $n\rightarrow +\infty$ due to Lemma \ref{lemma:20220422-l1}.
Then by (\ref{eq:20220422-e7}), for $n$ large enough we have
$$\frac{v_n(0)}{\lambda_n}+\frac{u_n(0)}{\lambda_n}
\geq\frac{v_n(0)}{\lambda_n}+\frac{u_n(0)^{q-2}}{\lambda_n}\geq1.$$
Thus
$
\liminf_{n\rightarrow +\infty}\frac{M_n}{\lambda_n}
>0.
$
\ep

\bl\label{lemma:20220423-l1}
Let $(u_n, v_n)\in H^1(\R^3)\times \dot{H}^1(\R^3)$ be positive ground state solutions to \eqref{eq:20220603-1} with $\lambda=\lambda_n\rightarrow 0^+$. Let $M_n=\|u_n\|_\infty+\|v_n\|_\infty$.
Define
\beq\nonumber\label{eq:20210820-e4}
\bar{u}_n(x):=\frac{1}{M_n}u_n\left(\frac{x}{\sqrt{\lambda_n}}\right),
\quad\bar{v}_n(x):=\frac{1}{M_n}v_n\left(\frac{x}{\sqrt{\lambda_n}}\right)
\eeq
then $\bar{u}_n(x)\rightarrow 0$ as $\mid x\mid\rightarrow +\infty$ uniformly in $n\in \mathbb{N}$.
\el

\bp
By Lemmas \ref{lemma:20220423-bl1} and Lemma \ref{lemma:20220610-bl1}, up to a subsequence, we may assume that
$$\frac{M_n^{\eta}}{\lambda_n}\rightarrow C_0>0, \hbox{with}~\eta:=\min\{q-2,1\}.$$
$\bar{u}_n$ satisfies:
\beq\label{20220603:e1}
\begin{cases}
-\left({\bar{u}_n}''(r)+\frac{2}{r}{\bar{u}_n}'(r)\right)=-\bar{u}_n(r)+\frac{M_n^{q-2}\bar{u}_n(r)^{q-1}}{\lambda_n}+
\frac{M_n}{\lambda_n}\bar{v}_n\bar{u}_n(r),\\
-\left({\bar{v}_n}''(r)+\frac{2}{r}{\bar{v}_n}'(r)\right)=\frac{M_n}{\lambda_n}\bar{u}_n^2.
\end{cases}
\eeq

{\bf  For the case $2<q<3$,} $$\frac{M_n^{q-2}}{\lambda_n}\rightarrow C_0,\quad \frac{M_n}{\lambda_n}\rightarrow 0,\quad \text{as}~~n\to+\infty.$$
 We argue by contradiction and suppose there exists a sequence $r_n\rightarrow +\infty$ such that $\bar{u}_n(r_n)=\varepsilon$. By changing the origin to $r_n$ and passing to the limit (similar to the argument of Lemma \ref{20230413-l1}), from the first equation in \eqref{20220603:e1} we obtain a nontrivial solution $\bar{u}$ of the following equation
\begin{equation*}
-\bar{u}''=-\bar{u}+C_0\bar{u}^{q-1}, \;r\in \mathbb{R},
\end{equation*}
with $\bar{u}(0)=\varepsilon, \bar{u}\geq 0$ and bounded.  By Proposition \ref{p2.2}, we obtain that $\bar{u}$ is decreasing on $\mathbb{R}$. Hence, $\bar{u}$ has a limit $\bar{u}_+$ at $r=+\infty$ and a limit $\bar{u}_-$ at $r=-\infty$. In particular, $\bar{u}_\pm$ solve
$$-\bar{u}_\pm+C_0\bar{u}_{\pm}^{q-1}=0.$$
So by $\bar{u}_+<\varepsilon\leq \bar{u}_-$, we obtain that $\bar{u}_+=0$ and $\bar{u}_-=(C_0)^{\frac{1}{q-2}}$. Then, since from (\ref{eq:20210820-e4}), we have that $-\bar{u}''\leq 0$ on $\mathbb{R}$ necessarily $\bar{u}'(0) <0$ and using again that $-\bar{u}''\leq 0$ on $(- \infty, 0]$ we get a contradiction with the fact that $\bar{u}$ is bounded.

{\bf  For the case $3<q<6$,} $$\frac{M_n^{q-2}}{\lambda_n}\rightarrow 0,\quad \frac{M_n}{\lambda_n}\rightarrow C_0.$$
First, we show that $\bar{u}_n(x)\rightarrow 0$ as $\mid x\mid\rightarrow +\infty$ uniformly in $n\in \mathbb{N}$. We suppose there exists a sequence $r_n\rightarrow +\infty$ such that $\bar{u}_n(r_n)=\varepsilon$. By changing the origin to $r_n$ and passing to the limit of (\ref{20220603:e1}), we obtain a nontrivial solution $\bar{u}$ of the following equation,
\beq\label{eq:2023413-e3}
\begin{cases}
-\bar{u}''=-\bar{u}+C_0\bar{v}\bar{u}(r),\\
-\bar{v}''=C_0\bar{u}^2.
\end{cases}
\eeq
with $\bar{u}(0)=\varepsilon, \bar{u}\geq 0$ and bounded. By Proposition \ref{p2.2}, we obtain that $\bar{u}$ and $\bar{v}$ are decreasing on $\mathbb{R}$. Hence, $\bar{u}$ has a limit $\bar{u}_+$ at $r=+\infty$ and a limit $\bar{u}_-$ at $r=-\infty$.
Also, $\bar{v}$ has a limit $\bar{v}_+$ at $r=+\infty$ and a limit $\bar{v}_-$ at $r=-\infty$. In particular, $\bar{u}_\pm, \bar{v}_\pm$ solve
\beq\nonumber
\begin{cases}
0=-\bar{u}_\pm+C_0\bar{v}_\pm\bar{u}_\pm,\\
0=C_0\bar{u}_\pm^2.
\end{cases}
\eeq
So $\bar{u}_+=\bar{u}_-=0$, which contradicts with $\bar{u}(0)=\varepsilon>0$ and $-\bar{u}''\leq 0$ on $(- \infty, 0]$.
\ep

Now, we are ready to give the result about the behavior of positive solutions for $\lambda>0$ small.

\begin{theorem}\label{th4.3} (The behavior in the sense of $C_{r,0}(\R^3)$ as $\lambda\to0^+$)
Let  $(u_n, v_n)\in H^1(\R^3)\times \dot{H}^1(\R^3)$ be positive radial ground state solutions to \eqref{eq:20220603-1} with $\lambda=\lambda_n\rightarrow 0^+$.
\begin{itemize}
\item[(i)] If $2<q<3$,  define
$$
\tilde{u}_n(x):=\lambda_{n}^{-\frac{1}{q-2}}u_n\left(\frac{x}{\sqrt{\lambda_n}}\right),\quad \tilde{v}_n(x):=\lambda_{n}^{-\frac{1}{q-2}}v_n\left(\frac{x}{\sqrt{\lambda_n}}\right).
$$
Then, passing to a subsequence if necessary we have that $\tilde{u}_n\rightarrow W$ in $C_{r,0}(\R^3)$, where $W$ is the unique positive solution to \eqref{20220609-V1}.

\item[(ii)] If $3<q<6$, define
$$
\bar{u}_n(x):=\frac{1}{\lambda_{n}}u_n\left(\frac{x}{\sqrt{\lambda_n}}\right),\quad \bar{v}_n(x):=\frac{1}{\lambda_{n}}v_n\left(\frac{x}{\sqrt{\lambda_n}}\right).
$$
Passing to a subsequence if necessary we have that $\bar{u}_n\rightarrow U$ in $C_{r,0}(\R^3)$, where $U$ is the unique positive solution to \eqref{20220609-VU3}.
\end{itemize}
\end{theorem}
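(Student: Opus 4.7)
The strategy is to first recognize that the scalings in the statement convert the ground states of \eqref{eq:20220420-e1} into ground states of equations \eqref{2023416-e2} and \eqref{2023416-e1} with a small coupling parameter, then to apply the Ma--Moroz $H^1$-convergence Propositions \ref{2023417-p1}--\ref{2023417-p2}, and finally to upgrade $H^1$-convergence to uniform convergence on $\R^3$ via elliptic regularity combined with the uniform decay estimate of Lemma \ref{lemma:20220423-l1}.

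First I would carry out the change of variables. A direct computation shows that if $u_n$ solves \eqref{eq:20220420-e1} with $\lambda=\lambda_n$, then in case (i) $\tilde u_n$ satisfies \eqref{2023416-e2} with parameter $\mu_n=\lambda_n^{-2(q-3)/(q-2)}$, and in case (ii) $\bar u_n$ satisfies \eqref{2023416-e1} with parameter $\nu_n=\lambda_n^{q-3}$. For $2<q<3$ we have $\mu_n\to 0^+$, while for $3<q<6$ we have $\nu_n\to 0^+$. Because the rescaling preserves the mountain-pass/least-action characterization, $\tilde u_n$ (resp. $\bar u_n$) remains a radial ground state of the rescaled equation. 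Propositions \ref{2023417-p1} and \ref{2023417-p2} then furnish a subsequence converging in $H^1(\R^3)$ to $W$ (resp.\ $U$), where uniqueness of these limiting profiles is Kwong's theorem and the Lieb--Tod--Moroz--Wei--Winter result, respectively.

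To promote $H^1$-convergence to $C_{r,0}$-convergence I would combine three ingredients. First, the rescaled profiles are uniformly bounded in $L^\infty(\R^3)$: this can be read off from $\tilde u_n(0)=(u_n(0)^{q-2}/\lambda_n)^{1/(q-2)}$ (resp.\ $\bar u_n(0)=u_n(0)/\lambda_n$) together with Lemmas \ref{lemma:20220423-bl1}--\ref{lemma:20220610-bl1}, which show that the ratios $M_n^{q-2}/\lambda_n$ (resp.\ $M_n/\lambda_n$) are bounded above and below, and from Proposition \ref{p2.2} giving that the maximum is attained at the origin. Second, Lemma \ref{lemma:20220423-l1} yields uniform decay at infinity for $u_n(\cdot/\sqrt{\lambda_n})/M_n$; since the ratios above are bounded, this decay transfers to $\tilde u_n$ and $\bar u_n$. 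Third, the Riesz potential $I_2\star \tilde u_n^2$ is uniformly bounded on $\R^3$ by H-L-S together with the $L^\infty\cap L^{12/5}$-bounds (from $H^1$-boundedness), so the right-hand sides of the rescaled equations are in $L^\infty_{\mathrm{loc}}$ uniformly. Standard interior Schauder (or $W^{2,s}$) estimates then deliver uniform $C^{2,\alpha}_{\mathrm{loc}}$-bounds, whence Arzel\`a--Ascoli gives local uniform convergence along a subsequence; the $H^1$-convergence identifies the limit as $W$ (resp.\ $U$).

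Finally, I would combine the local uniform convergence with the uniform decay via an $\varepsilon/3$ argument: given $\varepsilon>0$, choose $R$ so that $W(x)<\varepsilon/2$ for $|x|\ge R$ and so that $\tilde u_n(x)<\varepsilon/2$ uniformly in $n$ for $|x|\ge R$; then use local uniform convergence on $\overline{B_R(0)}$ to control the remainder, obtaining $\|\tilde u_n-W\|_{L^\infty(\R^3)}\to 0$. The same argument works for $\bar u_n\to U$. The main technical point is the precise translation of Lemma \ref{lemma:20220423-l1}'s uniform decay from the $M_n$-normalization to the $\lambda_n$-normalizations appearing in the theorem; once Lemmas \ref{lemma:20220423-bl1}--\ref{lemma:20220610-bl1} fix the ratio, this is routine. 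Uniqueness of $W$ and $U$ implies that the whole family (not merely a subsequence) converges, completing the proof.
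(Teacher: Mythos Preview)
Your argument is correct and rests on the same three lemmas the paper uses (Lemmas \ref{lemma:20220423-bl1}, \ref{lemma:20220610-bl1}, \ref{lemma:20220423-l1}) for the uniform $L^\infty$ bound and the uniform decay, together with standard interior elliptic estimates for equi-continuity. The difference lies in how you identify the limit. The paper argues \emph{directly}: once $\{\tilde u_n\}$ (resp.\ $\{\bar u_n\}$) is shown to be pre-compact in $C_{r,0}(\R^3)$, it passes to the limit in the rescaled system, obtains a nontrivial nonnegative radial solution of \eqref{20220609-V1} (resp.\ \eqref{20220609-VU3}), and then invokes Kwong's (resp.\ Lieb's) uniqueness theorem. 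In particular, the paper does \emph{not} use Propositions \ref{2023417-p1}--\ref{2023417-p2} here; those are invoked only later, in the $H^1$ statement (Theorem \ref{th:20220608}). You instead import the Ma--Moroz $H^1$-convergence first and use it to pin down the limit of the $C_{r,0}$-convergent subsequence. Your route has the minor advantage that nontriviality of the limit comes for free from the $H^1$ convergence, whereas the paper needs the lower bound in Lemma \ref{lemma:20220610-bl1} to rule out the zero limit; the paper's route has the advantage of being self-contained at this stage, postponing any appeal to \cite{ma2023asymptotic}. One small imprecision: the uniform $L^\infty$ bound on $I_2\star \tilde u_n^2$ does not follow from H--L--S alone (that inequality gives $L^p$, not $L^\infty$, control); it follows from the direct kernel estimate once $\tilde u_n^2$ is uniformly in $L^1\cap L^\infty$, or equivalently from the uniform $L^\infty$ bound on $\tilde v_n$ via the ratio $M_n/\lambda_n^{1/(q-2)}$ --- which is exactly how the paper phrases it by working with the system form.
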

\begin{proof}
(i) Case $2<q<3$. By Lemma \ref{lemma:20220423-bl1} and Lemma \ref{lemma:20220610-bl1}, we know
\begin{equation}\label{eq:2022417-a1}
0<\liminf\limits_{n\to\infty}\frac{M_n^{q-2}}{\lambda_n}\leq \limsup\limits_{n\to\infty}\frac{M_n^{q-2}}{\lambda_n} < +\infty.
\end{equation}
Noting that $\tilde{u}_n, \tilde{v}_n$ satisfy
\begin{equation*}\label{eq:20220608-1}
\begin{cases}
-\Delta \tilde{u}_n+\tilde{u}_n=\lambda_n^{\frac{3-q}{q-2}}\tilde{v}_n\tilde{u}_n
+\tilde{u}_n^{q-1},\\
-\Delta \tilde{v}_n=\lambda_n^{\frac{3-q}{q-2}}\tilde{u}_n^2.
\end{cases}
\end{equation*}
By standard regularity argument, it is easy to see that $\tilde{u}_n, \tilde{v}_n$ are equi-continuous on
bounded sets. On the other hand, we remark that
$$\tilde{u}_n(x) = \frac{M_n}{\lambda_n^{\frac{1}{q-2}}} \bar{u}_n(x)~~\quad\text{and}~~\tilde{v}_n(x) = \frac{M_n}{\lambda_n^{\frac{1}{q-2}}} \bar{v}_n(x),$$
 where $\bar{u}_n(x)$ and $\bar{v}_n(x)$ are given in Lemma \ref{lemma:20220423-l1}. So, by Lemma \ref{lemma:20220423-l1} and \eqref{eq:2022417-a1}, we see that $\tilde{u}_n$ decay to $0$
uniformly at $\infty$. Hence, $\{\tilde{u}_n\}$ is pre-compact in $C_{r,0}(\R^3)$. And note that $\{\tilde{v}_n\}$ are bounded in $C_{r,0}(\R^3)$, passing to a subsequence
if necessary, we may assume that $\tilde{u}_n \to W \in C_{r,0}(\R^3)$.
Since $\lim\limits_{n\to\infty}\lambda_n^{\frac{3-q}{q-2}}\tilde{u}_n(x)=0$ for any $x\in\R^3$, then $W\in C_{r,0}(\R^3)$ solves
\eqref{20220609-V1} with $W(0) = \max\limits_{x\in\R^3}W(x) > 0.$

(ii) Case $3<q<6$. By Lemma \ref{lemma:20220423-bl1} and Lemma \ref{lemma:20220610-bl1}, we know
\begin{equation*}
0<\liminf\limits_{n\to\infty}\frac{M_n}{\lambda_n}\leq \limsup\limits_{n\to\infty}\frac{M_n}{\lambda_n} < +\infty.
\end{equation*}
Noting that $\bar{u}_n, \bar{v}_n$ satisfy
\beq\nonumber
\begin{cases}
-\Delta \bar{u}_n+\bar{u}_n=\bar{v}_n\bar{u}_n+\lambda_n^{q-3}\bar{u}_n^{q-1}~\hbox{in}~\R^3,\\
-\Delta \bar{v}_n=\bar{v}^2_n.
\end{cases}
\eeq

Using a similar argument of Case $2<q<3$, $\bar{u}_n \to U\in C_{r,0}(\R^3)$, and $U$ solves
\eqref{20220609-VU3}.
\end{proof}

\begin{theorem}\label{th:20220608} (The behavior in the sense of $H^1(\R^3)$ as $\lambda\to0^+$)
Let  $(u_n, v_n)\in H^1(\R^3)\times \dot{H}^1(\R^3)$ be positive radial ground state solutions to \eqref{eq:20220603-1} with $\lambda=\lambda_n\rightarrow 0^+$.
\begin{itemize}
\item[(i)] If $2<q<3$,  define
$$
\tilde{u}_n(x):=\lambda_{n}^{-\frac{1}{q-2}}u_n\left(\frac{x}{\sqrt{\lambda_n}}\right),\quad \tilde{v}_n(x):=\lambda_{n}^{-\frac{1}{q-2}}v_n\left(\frac{x}{\sqrt{\lambda_n}}\right).
$$
Passing to a subsequence if necessary we have that $\tilde{u}_n\rightarrow W$ in $H^1(\R^3)$, where $W$ is the unique positive solution to \eqref{20220609-V1}.

\item[(ii)] If $3<q<6$, define
$$\bar{u}_n(x):=\frac{1}{\lambda_{n}}u_n\left(\frac{x}{\sqrt{\lambda_n}}\right).$$
Passing to a subsequence if necessary we have that $\bar{u}_n\rightarrow U$ in $H^1(\R^3)$, where $U$ is the unique positive solution to \eqref{20220609-VU3}.
\end{itemize}
\end{theorem}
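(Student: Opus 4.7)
The plan is to upgrade the $C_{r,0}(\R^3)$ convergence of Theorem \ref{th4.3} to strong convergence in $H^1(\R^3)$ via the classical three-step scheme: establish uniform $H^1$-boundedness of the rescaled sequence, identify its weak limit as $W$ (respectively $U$) through uniqueness of the pointwise limit, and then force convergence of $H^1$-norms from the equation, which upgrades weak to strong convergence in the Hilbert space $H^1(\R^3)$.

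I focus on Case (i); Case (ii) is analogous. The rescaled functions satisfy
\begin{equation*}
-\Delta \tilde u_n + \tilde u_n = \lambda_n^{\frac{3-q}{q-2}}\tilde v_n\tilde u_n + \tilde u_n^{q-1},\qquad -\Delta \tilde v_n = \lambda_n^{\frac{3-q}{q-2}}\tilde u_n^2,
\end{equation*}
with $\lambda_n^{(3-q)/(q-2)}\to 0$ since $q<3$. First I would derive a uniform exponential decay $\tilde u_n(x)\le C e^{-|x|/2}$ valid for $|x|$ large. The uniform decay at infinity in Theorem \ref{th4.3}, the boundedness of $\tilde v_n$ (inherited from Lemma \ref{lemma:prior-estimate1}), and the vanishing prefactor together force the coefficient $\lambda_n^{(3-q)/(q-2)}\tilde v_n + \tilde u_n^{q-2}$ to be smaller than $1/2$ outside a sufficiently large ball, uniformly in $n$. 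Hence $-\Delta \tilde u_n + \tfrac{1}{2}\tilde u_n\le 0$ there, and comparison against a radial supersolution of $(-\Delta+\tfrac12)$ in $\R^3$ yields the bound. Combined with the $L^\infty$-bound and pointwise convergence, dominated convergence delivers $\tilde u_n\to W$ in $L^p(\R^3)$ for every $p\in[1,\infty)$.

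Next, testing the rescaled equation against $\tilde u_n$ produces the identity
\begin{equation*}
\|\tilde u_n\|_{H^1}^2 = \lambda_n^{\frac{3-q}{q-2}}\int_{\R^3}(I_2\star \tilde u_n^2)\tilde u_n^2\,dx + \int_{\R^3}\tilde u_n^q\,dx.
\end{equation*}
The right-hand side is uniformly bounded (via HLS and the $L^p$ convergence), so $\{\tilde u_n\}$ is bounded in $H^1(\R^3)$. Extracting a weakly convergent subsequence, uniqueness of the a.e. limit forces the weak $H^1$-limit to be $W$. Passing to the limit in the identity, the nonlocal term vanishes (killed by the prefactor) while $\int \tilde u_n^q\to\int W^q$; since $W$ solves $-\Delta W + W = W^{q-1}$, testing against $W$ gives $\|W\|_{H^1}^2=\int W^q$. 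Thus $\|\tilde u_n\|_{H^1}\to \|W\|_{H^1}$, and weak convergence together with norm convergence in the Hilbert space $H^1(\R^3)$ yields the desired strong convergence.

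The main obstacle is the uniform exponential decay, which, though standard, requires the threshold radius to be chosen to handle simultaneously the nonlocal coefficient $\lambda_n^{(3-q)/(q-2)}\tilde v_n$ and the nonlinear term $\tilde u_n^{q-2}$ uniformly in $n$. For Case (ii) ($3<q<6$), the rescaled equation is $-\Delta \bar u_n + \bar u_n = \bar v_n \bar u_n + \lambda_n^{q-3}\bar u_n^{q-1}$ with $\lambda_n^{q-3}\to 0$; the uniform exponential decay is obtained analogously (now the local term vanishes while the nonlocal term persists), and the limiting energy identity becomes $\|U\|_{H^1}^2=\int(I_2\star U^2)U^2$, which is precisely the equation for $U$ tested against itself. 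Convergence of the nonlocal term follows from $L^{12/5}$ convergence of $\bar u_n$ combined with HLS, and the remainder of the argument is identical.
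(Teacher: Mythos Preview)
Your argument is correct and takes a genuinely different route from the paper. The paper's proof is a two-line black-box citation: after noting that $\tilde u_n$ (resp.\ $\bar u_n$) solves \eqref{2023416-e2} with $\mu=\lambda_n^{2(3-q)/(q-2)}\to 0$ (resp.\ \eqref{2023416-e1} with $\nu=\lambda_n^{q-3}\to 0$), it invokes Propositions~\ref{2023417-p1}--\ref{2023417-p2}, i.e.\ the asymptotic profiles of Ma--Moroz \cite{ma2023asymptotic}, and is done. You instead give a self-contained upgrade of the $C_{r,0}$ convergence already obtained in Theorem~\ref{th4.3}: uniform exponential decay of $\tilde u_n$ via a comparison argument, then dominated convergence for $L^p$, then norm convergence from the tested equation, hence strong $H^1$ convergence. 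Your route is more elementary and stays entirely within the paper's framework, while the paper's route is shorter but outsources the work. Two minor slips: the prefactor on the nonlocal term in your displayed energy identity should be $\lambda_n^{2(3-q)/(q-2)}$ (one power from the system form plus one from $\tilde v_n=\lambda_n^{(3-q)/(q-2)}(I_2\star\tilde u_n^2)$), and the uniform $L^\infty$ bound on $\tilde v_n$ comes from Lemma~\ref{lemma:20220423-bl1} rather than Lemma~\ref{lemma:prior-estimate1}; neither affects the validity of the argument.
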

\bp
(i) $2<q<3$.
Since $\tilde{u}_n=\lambda_{n}^{-\frac{1}{q-2}}u_n\left(\frac{x}{\sqrt{\lambda_n}}\right)$ satisfies
\beq
-\Delta \tilde{u}_n+\tilde{u}_n=\lambda_n^{2\frac{3-q}{q-2}}\left(I_2\star\tilde{u}_n^2\right) \tilde{u}_n+\tilde{u}_n^{q-1},
\eeq
and $2\frac{3-q}{q-2}>0$, we get $\tilde{u}_n\rightarrow W$ in $H^1(\R^3)$ as $\lambda_n\to0^+$ by Proposition \ref{2023417-p1}
( \cite[Theorem 2.5.]{ma2023asymptotic}).

(ii) $3<q<6$.
Since $\bar{u}_n=\lambda_{n}^{-1}u_n\left(\frac{x}{\sqrt{\lambda_n}}\right)$ satisfies
\beq
-\Delta \bar{u}_n+\bar{u}_n=\left(I_2\star\bar{u}_n^2\right) \bar{u}_n+\lambda_n^{q-3}\bar{u}_n^{q-1},
\eeq
and $q-3>0$, we get $\bar{u}_n\rightarrow U$ in $H^1(\R^3)$ as $\lambda_n\to0^+$ by Proposition \ref{2023417-p2}
( \cite[Theorem 2.4.]{ma2023asymptotic}).
\ep

\vskip4mm
\subsection{ Asymptotic behaviors of positive solutions
for $\lambda\to +\infty$}

\begin{lemma}\label{lm:20220606-1} Let $(u_n, v_n)\in H^1(\R^3)\times \dot{H}^1(\R^3)$ be positive ground state solutions to \eqref{eq:20220603-1} with $\lambda=\lambda_n\rightarrow +\infty$. Let $M_n=\|u_n\|_\infty+\|v_n\|_\infty$. Then
\begin{equation}\nonumber\label{20220609-1}
\liminf\limits_{n\to+\infty} M_n  = +\infty,
\end{equation}
and
\begin{itemize}
\item[(i)] if $3<q<6$, then
\beq\label{eq:20220602-e4}
0<\liminf_{n\rightarrow +\infty}\frac{M_n^{q-2}}{\lambda_n};
\eeq
\item[(ii)] if $2<q<3$, then
\beq\label{eq:20220602-e5}
0<\liminf_{n\rightarrow +\infty}\frac{M_n}{\lambda_n}.
\eeq
\end{itemize}
\end{lemma}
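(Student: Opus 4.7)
The plan is to run essentially the same maximum-principle argument as in Lemma~\ref{lemma:20220610-bl1}, but with the roles of $M_n$ and $1$ swapped, since now $\lambda_n\to+\infty$ will force $M_n\to+\infty$ rather than $M_n\to 0$. The crux is a pointwise inequality at the origin. By Proposition~\ref{p2.2}, $u_n$ and $v_n$ are radially decreasing, so $u_n(0)=\|u_n\|_\infty$ and $v_n(0)=\|v_n\|_\infty$. Evaluating the first line of \eqref{eq:20220603-1} at $x=0$, where $-\Delta u_n(0)\ge 0$ by the maximum principle, we get
$$
\lambda_n\,u_n(0)\ \leq\ v_n(0)\,u_n(0)+u_n(0)^{q-1},
$$
and dividing by $u_n(0)>0$ yields the fundamental inequality
$$
\lambda_n\ \leq\ v_n(0)+u_n(0)^{q-2}. \qquad (\star)
$$

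From $(\star)$ the divergence $\liminf_{n\to+\infty}M_n=+\infty$ is immediate: since $\lambda_n\to+\infty$, the right-hand side of $(\star)$ must diverge, so at least one of $u_n(0)$ and $v_n(0)$ is unbounded, whence $M_n=u_n(0)+v_n(0)\to+\infty$. This disposes of the first assertion and, in particular, puts us in the regime $M_n\geq 1$ for all $n$ large.

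For the quantitative lower bounds I would split on the sign of $q-3$, using the comparison between $u_n(0)^{q-2}$ and $u_n(0)$ that is valid in the regime $M_n\geq 1$ (exactly the opposite comparison to the one exploited in Lemma~\ref{lemma:20220610-bl1}, where $M_n\leq 1$). In Case~(i), $3<q<6$, we have $q-2>1$, so for $n$ large both $v_n(0)\leq M_n\leq M_n^{q-2}$ and $u_n(0)^{q-2}\leq M_n^{q-2}$; plugging into $(\star)$ gives $\lambda_n\leq 2M_n^{q-2}$, whence $M_n^{q-2}/\lambda_n\geq 1/2$. In Case~(ii), $2<q<3$, we have $0<q-2<1$, so $u_n(0)^{q-2}\leq \max\{u_n(0),1\}\leq M_n$ for $M_n\geq 1$; combining with $v_n(0)\leq M_n$ gives $\lambda_n\leq 2M_n$, whence $M_n/\lambda_n\geq 1/2$.

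I do not foresee any genuine obstacle here: the whole argument reduces to the pointwise bound $(\star)$ followed by an elementary comparison. The only point deserving a moment's care is matching each range of $q$ with the correct inequality between $u_n(0)^{q-2}$ and $u_n(0)$ in the large-$M_n$ regime, which is precisely the mirror image of the dichotomy used when $\lambda_n\to 0^+$.
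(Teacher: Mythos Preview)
Your proof is correct and follows essentially the same approach as the paper: both derive the key pointwise inequality $\lambda_n\le v_n(0)+u_n(0)^{q-2}$ from the maximum principle at the origin, then compare $M_n$ with $M_n^{q-2}$ in the regime $M_n\ge 1$ to conclude. The only cosmetic difference is that the paper first rescales to $\bar u_n(x)=u_n(0)^{-1}u_n(x/\sqrt{\lambda_n})$ before evaluating at $0$, whereas you work directly with the original equation; the resulting inequality and the ensuing case split are identical.
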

\bp
Setting
$$\bar{u}_n(x):=\frac{1}{u_n(0)}u_n\left(\frac{x}{\sqrt{\lambda_n}}\right),$$
we have $\bar{u}_n(0)=\|\bar{u}_n\|_\infty=1$ and
\beq\label{eq:20220609-e6}
\begin{cases}
-\Delta \bar{u}_n+\bar{u}_n=\frac{v_n}{\lambda_n}\bar{u}_n+\frac{1}{\lambda_n }u_n(0)^{q-2}\bar{u}_n^{q-1}~\hbox{in}~\R^3,\\
-\Delta v_n=u^2_n.
\end{cases}
\eeq
Taking $x=0$, by maximum principle, it follows from \eqref{eq:20220609-e6} that
\begin{equation}\label{eq:20220610-e7}
\aligned
1=\bar{u}_n(0)\leq -\Delta \bar{u}_n(0)+\bar{u}_n(0)\leq \frac{v_n(0)+u_n(0)^{q-2}}{\lambda_n}.
\endaligned
\end{equation}
Then
\begin{equation}\label{eq:20220609-e7}
\aligned
1\leq \frac{v_n(0)+u_n(0)^{q-2}}{\lambda_n}
\leq \frac{M_n+M_n^{q-2}}{\lambda_n}.
\endaligned
\end{equation}
Since $\lambda_n \to +\infty$, we obtain $\liminf\limits_{n\to+\infty} M_n  = +\infty.$

(i) $q\in(3,6)$. By \eqref{eq:20220610-e7} we get
$$1<\liminf_{n\rightarrow +\infty}\frac{v_n(0)+u_n(0)^{q-2}}{\lambda_n}\leq \liminf_{n\rightarrow +\infty}\frac{M_n+M_n^{q-2}}{\lambda_n}.$$
Since $\liminf\limits_{n\to+\infty}M_n= +\infty$ and $q-2>1$, thus \eqref{eq:20220602-e4} also holds.

(ii) $q\in(2,3)$. By \eqref{eq:20220610-e7} we get
$$1<\liminf_{n\rightarrow +\infty}\frac{v_n(0)+u_n(0)^{q-2}}{\lambda_n}\leq \liminf_{n\rightarrow +\infty}\frac{M_n+M_n^{q-2}}{\lambda_n}.$$
Since $\liminf\limits_{n\to+\infty}M_n= +\infty$ and $q-2\leq1$, thus \eqref{eq:20220602-e5} also holds.
\ep

\begin{lemma}\label{lm:20220610-2} Let $(u_n, v_n)\in H^1(\R^3)\times \dot{H}^1(\R^3)$ be positive ground state solutions to \eqref{eq:20220603-1} with $\lambda=\lambda_n\rightarrow +\infty$. Let $M_n=\|u_n\|_\infty+\|v_n\|_\infty$. Then
\begin{itemize}
\item[(i)] if $3<q<6$, then
\beq\nonumber\label{eq:20220610-e4}
 \limsup_{n\rightarrow +\infty}\frac{M_n^{q-2}}{\lambda_n}<+\infty;
\eeq
\item[(ii)] if $2<q<3$, then
\beq\nonumber\label{eq:20220610-e5}
 \limsup_{n\rightarrow +\infty}\frac{M_n}{\lambda_n}<+\infty.
\eeq
\end{itemize}
\end{lemma}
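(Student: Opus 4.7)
The plan is to argue by contradiction with an appropriate blow-up rescaling, paralleling Lemma \ref{lemma:20220423-bl1} but now in the regime $\lambda_n\to+\infty$. The new ingredient—and the main obstacle—is an a priori $\dot H^1$-smallness estimate for the rescaled Newton potential $\tilde v_n$, which I obtain from a mountain-pass comparison with two reference problems whose $\lambda$-scaling is explicit.

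For case (i) $3<q<6$, I assume $M_n^{q-2}/\lambda_n\to +\infty$ along a subsequence and apply the power scaling $\sigma=-(q-2)/2$ used in Lemma \ref{lemma:prior-estimate1}. In \eqref{eq:20220603-i1} both $M_n^{3-q}$ (on the Hartree term and on $-\Delta\tilde v_n$) and $\lambda_n/M_n^{q-2}$ (on the mass term) vanish, so standard elliptic regularity yields $C^2_{\mathrm{loc}}$-limits $\tilde u,\tilde v\geq 0$, radial decreasing, bounded by $1$, with $\tilde u(0)+\tilde v(0)=1$, $-\Delta\tilde u=\tilde u^{q-1}$ and $-\Delta\tilde v=0$; since $q-1<5$, Serrin's theorem (Lemma \ref{lemma:20220603-l1}) forces $\tilde u\equiv 0$ and hence $\tilde v(0)=1$. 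For case (ii) $2<q<3$, I assume $M_n/\lambda_n\to+\infty$ and apply the Hartree scaling $\sigma=-1/2$; now $M_n^{q-3}\to 0$ and $\lambda_n/M_n\to 0$, and the limit system is the Hartree-type system $-\Delta\tilde u=\tilde v\tilde u$, $-\Delta\tilde v=\tilde u^2$ with the same normalisation at the origin. In both cases the contradiction will follow once I prove $\tilde v\equiv 0$: this directly contradicts $\tilde v(0)=1$ in case (i), and in case (ii) it forces $\tilde u^2=-\Delta\tilde v\equiv 0$ (alternatively, Lemma \ref{lemma:20220601-l1} applies since its decay hypothesis is then trivially satisfied), again contradicting $\tilde u(0)+\tilde v(0)=1$.

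The identity $\tilde v\equiv 0$ is extracted from the scaling relation
\begin{equation*}
\|\nabla\tilde v_n\|_2^2=M_n^{-\sigma-2}\,\|\nabla v_n\|_2^2,\qquad \|\nabla v_n\|_2^2=\int_{\R^3}(I_2\star u_n^2)u_n^2\,dx\leq 6\,c_{\lambda_n},
\end{equation*}
together with the mountain-pass comparisons $c_{\lambda_n}\leq c^S_{\lambda_n}$ (case (i)) and $c_{\lambda_n}\leq c^{SN}_{\lambda_n}$ (case (ii)), which follow from $J_\lambda\leq J^S_\lambda$, $J_\lambda\leq J^{SN}_\lambda$ and the nesting of admissible paths $\Gamma^S_\lambda,\Gamma^{SN}_\lambda\subset\Gamma_\lambda$ inherent in \eqref{MP1}--\eqref{MP2}. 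The scale invariances of the two reference problems give $c^S_{\lambda_n}=c^S_1\,\lambda_n^{(6-q)/(2(q-2))}$ and $c^{SN}_{\lambda_n}=c^{SN}_1\,\lambda_n^{3/2}$, so in case (i) $\|\nabla\tilde v_n\|_2^2\lesssim(\lambda_n^{1/(q-2)}/M_n)^{(6-q)/2}\to 0$ and in case (ii) $\|\nabla\tilde v_n\|_2^2\lesssim(\lambda_n/M_n)^{3/2}\to 0$, by the respective contradiction hypothesis. Sobolev embedding $\dot H^1(\R^3)\hookrightarrow L^6(\R^3)$ then gives $\tilde v_n\to 0$ in $L^6(\R^3)$, so along a subsequence $\tilde v_n\to 0$ a.e.; combined with $\tilde v_n\to\tilde v$ pointwise from $C^2_{\mathrm{loc}}$-convergence this forces $\tilde v\equiv 0$. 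The hardest step is precisely this global $\dot H^1$-smallness: the local Liouville arguments in the blown-up equations leave a constant (harmonic) ambiguity for $\tilde v$ that only the mountain-pass comparison with the pure Schr\"odinger and pure Schr\"odinger--Newton problems can rule out.
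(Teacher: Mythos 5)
Your proof is correct, and it follows the paper's blow-up skeleton (same rescalings $\sigma=-\frac{q-2}{2}$ and $\sigma=-\frac12$, same limit equations, same Liouville inputs from Lemma \ref{lemma:20220603-l1} and Lemma \ref{lemma:20220601-l1}), but the way you dispose of the rescaled potential $\tilde v$ is genuinely different and is the more careful route. The paper simply asserts that the $C^2_{loc}$-limit $\tilde u$ is \emph{nontrivial} "by a similar argument of Lemma \ref{lemma:prior-estimate1}"; that earlier argument, however, kills $\tilde v$ by means of the uniform bound $\|v_n\|_{\dot H^1}^2\leq 6c_{\lambda_n}\leq 6c_\Lambda$ from Lemma \ref{20230122-l1}, which is only available for $\lambda$ ranging in a bounded interval, whereas here $\lambda_n\to+\infty$ and $c_{\lambda_n}$ may blow up; without some control on $\tilde v_n$ one cannot a priori exclude the degenerate scenario $\tilde u\equiv0$, $\tilde v\equiv\tilde v(0)=1$ (a bounded harmonic limit may be a nonzero constant). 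Your mountain-pass comparison supplies exactly the missing control: since the Hartree and power terms are nonnegative, $J_\lambda\leq J^S_\lambda$ and $J_\lambda\leq J^{SN}_\lambda$, the nesting of the admissible classes in \eqref{MP1}--\eqref{MP2} gives $c_\lambda\leq c^S_\lambda=\lambda^{\frac{6-q}{2(q-2)}}c^S_1$ and $c_\lambda\leq c^{SN}_\lambda=\lambda^{3/2}c^{SN}_1$, and combined with \eqref{eq:20220606-c} (i.e. $\|\nabla v_n\|_2^2=\int(I_2\star u_n^2)u_n^2\leq 6c_{\lambda_n}$) and the scaling identity $\|\nabla\tilde v_n\|_2^2=M_n^{-\sigma-2}\|\nabla v_n\|_2^2$, the contradiction hypotheses force $\|\tilde v_n\|_{\dot H^1}\to0$, hence $\tilde v\equiv0$, and the contradiction with $\tilde u(0)+\tilde v(0)=1$ follows exactly as you describe in both cases (your exponents check out: $\bigl(\lambda_n^{1/(q-2)}/M_n\bigr)^{(6-q)/2}$ and $(\lambda_n/M_n)^{3/2}$). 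What each approach buys: the paper's version is shorter but leans on a lemma whose hypotheses do not literally cover $\lambda_n\to+\infty$, while your energy-level comparison is self-contained, quantitative (it upgrades boundedness of $\tilde v_n$ in $\dot H^1$ to smallness), and would be a worthwhile addition to make this step airtight.
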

\bp
Similar to Lemma \ref{lemma:prior-estimate1},
letting $\tilde{u}_n(y)=\frac{1}{M_n}u_n(M_n^\sigma y)$ and $\tilde{v}_n(y)=\frac{1}{M_n}v_n(M_n^\sigma y)$, then
\begin{equation}\label{eq:20220610-i1}
\left\{
\begin{array}{ll}
\aligned
&-\Delta \tilde{u}_n=M_{n}^{1+2\sigma}\tilde{v}_n\tilde{u}_n+M_n^{q-2+2\sigma}\tilde{u}_n^{q-1}-\lambda_n M_{n}^{2\sigma}\tilde{u}_n \quad \text{in}~\R^3, \\
&-\Delta \tilde{v}_n=M_{n}^{1+2\sigma}\tilde{u}_n^2,
\endaligned
\end{array}
\right.
\end{equation}
Note that $\lambda_n\to+\infty$ and $M_n\rightarrow +\infty$ as $n\rightarrow +\infty$ due to Lemma \ref{lm:20220606-1}.

{\bf Case:} $3<q<6$.
Take $\sigma=-\frac{q-2}{2}$ in (\ref{eq:20220610-i1}).
If
$$\displaystyle \limsup_{n\rightarrow +\infty}\frac{M_n^{q-2}}{\lambda_n}=+\infty,$$ then
by a similar argument of Lemma \ref{lemma:prior-estimate1},
up to a subsequence, $\tilde{u}_n\rightarrow \tilde{u}$ in $C_{loc}^{2}(\R^3)$, where $\tilde{u}$ is a nontrivial nonnegative solution to
\beq\nonumber\label{eq:20220610-e12}
-\Delta \tilde{u}=\tilde{u}^{q-1}~ \hbox{in}~\R^3.
\eeq
Since in this case $q-1\in(2,5)$, we get a contradiction by using Lemma \ref{lemma:20220603-l1}.

{\bf Case:} $2<q<3$.
Take $\sigma=-\frac{1}{2}$ in (\ref{eq:20220610-i1}). If $\limsup_{n\rightarrow +\infty}\frac{M_n}{\lambda_n}=+\infty$, then to a subsequence, $\tilde{u}_n\rightarrow \tilde{u}$ in $C_{loc}^{2}(\R^3)$, where $\tilde{u}$ is a nontrivial nonnegative function satisfying
\begin{equation*}
\left\{
\begin{array}{ll}
-\Delta \tilde{u}=\tilde{v}\tilde{u}~\hbox{in}~\R^3, \\
-\Delta \tilde{v}=\tilde{u}^2~\hbox{in}~\R^3,
 \end{array}
\right.
\end{equation*}
a contradiction to Lemma \ref{lemma:20220601-l1}.
\ep

\begin{theorem}\label{th5.3} (The behavior in the sense of $C_{r,0}(\R^3)$ as $\lambda\to +\infty$)
Let $(u_n, v_n)\in H^1(\R^3)\times \dot{H}^1(\R^3)$ be positive ground state solutions to \eqref{eq:20220603-1} with $\lambda=\lambda_n\rightarrow +\infty$.
\begin{itemize}
\item[(i)] If $3<q<6$, define
\beq\label{eq:20220601-e15}
\tilde{u}_n(x):=\lambda_{n}^{-\frac{1}{q-2}}u_n\left(\frac{x}{\sqrt{\lambda_n}}\right),\quad
\tilde{v}_n(x):=\lambda_{n}^{-\frac{1}{q-2}}v_n\left(\frac{x}{\sqrt{\lambda_n}}\right).
\eeq
Then, passing to a subsequence if necessary we have that $\tilde{u}_n\rightarrow W$ in $C_{r,0}(\R^3)$, where $W$ is the unique positive solution to
\eqref{20220609-V1}

\item[(ii)] If $2<q<3$, define
$
\bar{u}_n(x):=\frac{1}{\lambda_{n}}u_n\left(\frac{x}{\sqrt{\lambda_n}}\right),
$
then passing to a subsequence if necessary we have that $\bar{u}_n\rightarrow U$ in $C_{r,0}(\R^3)$, where $U$ is the unique positive solution to
\eqref{20220609-VU3}.
\end{itemize}
\end{theorem}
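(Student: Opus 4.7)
The plan is to mirror the proof of Theorem \ref{th4.3}, transposing the $\lambda\to 0^+$ analysis to $\lambda\to+\infty$. Combining Lemma \ref{lm:20220606-1} and Lemma \ref{lm:20220610-2} yields
$$0<\liminf_{n\to\infty}\frac{M_n^{q-2}}{\lambda_n}\leq\limsup_{n\to\infty}\frac{M_n^{q-2}}{\lambda_n}<+\infty\quad\text{if }3<q<6,$$
and the analogous two-sided bound on $M_n/\lambda_n$ when $2<q<3$; moreover Lemma \ref{lm:20220606-1} forces $M_n\to+\infty$. These estimates keep the coefficients of the rescaled equations under control.

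A direct scaling shows that in case (i) the pair $(\tilde{u}_n,\tilde{v}_n)$ satisfies
$$-\Delta\tilde{u}_n+\tilde{u}_n=\lambda_n^{\frac{3-q}{q-2}}\tilde{v}_n\tilde{u}_n+\tilde{u}_n^{q-1},\qquad-\Delta\tilde{v}_n=\lambda_n^{\frac{3-q}{q-2}}\tilde{u}_n^2,$$
and for $3<q<6$ the exponent $(3-q)/(q-2)$ is negative, so the nonlocal contribution is crushed as $\lambda_n\to+\infty$. In case (ii), setting $\bar{v}_n(x):=\lambda_n^{-1}v_n(x/\sqrt{\lambda_n})$, the pair $(\bar{u}_n,\bar{v}_n)$ satisfies
$$-\Delta\bar{u}_n+\bar{u}_n=\bar{v}_n\bar{u}_n+\lambda_n^{q-3}\bar{u}_n^{q-1},\qquad-\Delta\bar{v}_n=\bar{u}_n^2,$$
so for $2<q<3$ the factor $\lambda_n^{q-3}\to 0$ annihilates the power term. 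Standard interior elliptic estimates then yield $C^2_{loc}$ equi-continuity of $\{\tilde{u}_n\}$ and $\{\bar{u}_n\}$.

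The main obstacle, as in Section 3, is producing uniform decay at infinity of the rescaled sequences, which is what is needed to conclude pre-compactness in $C_{r,0}(\R^3)$. I would adapt Lemma \ref{lemma:20220423-l1}: for the auxiliary normalisation $\hat{u}_n(x):=M_n^{-1}u_n(x/\sqrt{\lambda_n})$, rewrite the resulting radial ODE via Newton's representation as in Lemma \ref{20230413-l1}, and argue by contradiction. If $\hat{u}_n(r_n)=\varepsilon$ at some $r_n\to+\infty$, translating the origin to $r_n$ and passing to the limit produces a nontrivial bounded nonnegative decreasing solution either of $-\bar{u}''=-\bar{u}+C_0\bar{u}^{q-1}$ on $\R$, or of the coupled system \eqref{eq:2023413-e3}; Proposition \ref{p2.2} then forces $\bar{u}_-=0$ exactly as in Lemma \ref{lemma:20220423-l1}, contradicting $\bar{u}(0)=\varepsilon>0$. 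The ratio bounds in the first paragraph show that $\tilde{u}_n$ and $\bar{u}_n$ differ from $\hat{u}_n$ only by bounded prefactors, so they inherit the same uniform decay; combined with the $C^2_{loc}$ bound this gives pre-compactness in $C_{r,0}(\R^3)$.

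Passing to a subsequence $\tilde{u}_n\to W$ in case (i) and $\bar{u}_n\to U$ in case (ii), with $W(0)=\max W>0$ and $U(0)=\max U>0$, the vanishing of the scaled coefficients lets me take the limit in the rescaled systems, identifying $W$ as a nontrivial radial solution of \eqref{20220609-V1} and $U$ as one of \eqref{20220609-VU3}. Kwong's uniqueness \cite{MR969899} for \eqref{20220609-V1} and the known uniqueness for \eqref{20220609-VU3} (see \cite{Lieb1976/77,MR1677740}) then fix the limits, upgrading the convergence to the full sequence.
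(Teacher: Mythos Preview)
Your proposal is correct and follows essentially the same route as the paper. The paper's own proof of Theorem \ref{th5.3} simply invokes Lemmas \ref{lm:20220606-1} and \ref{lm:20220610-2} for the two-sided bound on $M_n^{q-2}/\lambda_n$ (resp.\ $M_n/\lambda_n$), writes down the rescaled system exactly as you do, and then says ``by the same argument of Theorem \ref{th4.3}''; your sketch merely spells out what that referral means, including the adaptation of Lemma \ref{lemma:20220423-l1} for the uniform decay and the identification of the limits via the known uniqueness results.
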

\bp
{\bf (i) $3<q<6$.} In this case, $\tilde{u}_n, \tilde{u}_n$ satisfy
\beq\nonumber
\begin{cases}
-\Delta \tilde{u}_n+\tilde{u}_n=\lambda_n^{-\frac{q-3}{q-2}}\tilde{v}_n\tilde{u}_n+\tilde{u}_n^{q-1},\\
-\Delta\tilde{v}_n=\lambda_n^{-\frac{q-3}{q-2}}\tilde{u}_n^2.
\end{cases}
\eeq
By Lemma \ref{lm:20220606-1} and Lemma \ref{lm:20220610-2}, we know
\begin{equation*}
0<\liminf\limits_{n\to\infty}\frac{M_n^{q-2}}{\lambda_n}\leq \limsup\limits_{n\to\infty}\frac{M_n^{q-2}}{\lambda_n} < +\infty.
\end{equation*}
Then by the same argument of Theorem \ref{th4.3}-(i) (Case $2<q<3$),
passing to a subsequence if necessary we have that $\tilde{u}_n\rightarrow W$ in $C_{r,0}(\R^3)$, where $W$ is the unique positive solution to \eqref{20220609-V1}.

{\bf (ii) $2<q<3$.} Using the same argument of Theorem \ref{th4.3}-(ii) (Case $3<q<6$), $\bar{u}_n \to U\in C_{r,0}(\R^3)$, and $U$ solves
\eqref{20220609-VU3}.
\ep

Now, similar to Theorem \ref{th:20220608} we also
have
\begin{theorem}\label{th5.4} (The behavior in the sense of $H^1(\R^3)$ as $\lambda\to +\infty$)
 Let  $\{u_n\}_{n=1}^{\infty}$ be positive radial solutions to \eqref{eq:20220420-e1} with $\lambda=\lambda_n\rightarrow +\infty$.
\begin{itemize}
\item[(i)] If $3<q<6$, define $\tilde{u}_n(x)$ and $\tilde{v}_n(x)$ as
\eqref{eq:20220601-e15}.
Then, passing to a subsequence if necessary we have that $\tilde{u}_n\rightarrow W$ in $H^1(\R^3)$.

\item[(ii)] If $2<q<3$, define
$
\bar{u}_n(x):=\frac{1}{\lambda_{n}}u_n\left(\frac{x}{\sqrt{\lambda_n}}\right).
$
Then, passing to a subsequence if necessary we have that $\bar{u}_n\rightarrow U$ in $H^1(\R^3)$.
\end{itemize}
\end{theorem}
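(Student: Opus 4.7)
The plan is to reduce Theorem \ref{th5.4} to Propositions \ref{2023417-p1} and \ref{2023417-p2}, exactly in the spirit of Theorem \ref{th:20220608}, the only change being which of the two coupling constants (the Hartree one or the pure-power one) becomes small. The key observation is that under the rescalings stated in the theorem, the original equation \eqref{eq:20220420-e1} is conjugate to \eqref{2023416-e2} or \eqref{2023416-e1} with coupling parameter $\mu$ or $\nu$ that tends to $0^+$ as $\lambda_n\to+\infty$. Since $u_n$ is a positive radial ground state of \eqref{eq:20220420-e1} and the rescalings are bijective on the class of solutions (up to a scalar multiple of the action, which is irrelevant for the ground-state variational characterization), the rescaled sequences $\tilde{u}_n$, $\bar{u}_n$ are positive radial ground states of their respective limit-type equations, so the cited propositions apply.

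For part (i) with $3<q<6$, I would set $\tilde{u}_n(x)=\lambda_n^{-1/(q-2)}u_n(x/\sqrt{\lambda_n})$ and compute directly. The Laplacian produces a factor $\lambda_n^{-1}$, the mass term $\lambda_n u_n$ collapses to $\tilde{u}_n$, the choice of the exponent $1/(q-2)$ renders the $u^{q-1}$ nonlinearity invariant, while the Hartree integral transforms with an extra factor $\lambda_n^{-2(q-3)/(q-2)}$. The resulting equation is
\begin{equation*}
-\Delta\tilde{u}_n+\tilde{u}_n=\lambda_n^{-2(q-3)/(q-2)}\bigl(I_2\star\tilde{u}_n^2\bigr)\tilde{u}_n+\tilde{u}_n^{q-1},
\end{equation*}
which is exactly \eqref{2023416-e2} with $\mu=\lambda_n^{-2(q-3)/(q-2)}$. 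Since $q-3>0$, $\mu\to 0^+$, and Proposition \ref{2023417-p1} then supplies the $H^1(\R^3)$-convergence $\tilde{u}_n\to W$ up to a subsequence.

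For part (ii) with $2<q<3$, the analogous substitution $\bar{u}_n(x)=\lambda_n^{-1}u_n(x/\sqrt{\lambda_n})$ leaves the Hartree term invariant (because the exponent $1$ is the one tailored to the Schr\"odinger--Newton scaling) while transforming the pure-power term into $\lambda_n^{q-3}\bar{u}_n^{q-1}$. Thus $\bar{u}_n$ solves \eqref{2023416-e1} with $\nu=\lambda_n^{q-3}$, and since $q-3<0$ we have $\nu\to 0^+$. Proposition \ref{2023417-p2} then yields $\bar{u}_n\to U$ in $H^1(\R^3)$ along a subsequence. There is no real analytic obstacle: the whole proof is bookkeeping of scaling exponents, and the only mild subtlety is checking that the rescaled sequences qualify as radial ground states of the rescaled equations, which is immediate from the scale-covariance of the action functional combined with the fact that $u_n$ itself is one for \eqref{eq:20220420-e1}.
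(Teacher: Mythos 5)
Your proposal is correct and matches the paper's (implicit) argument: the paper states Theorem \ref{th5.4} as being ``similar to Theorem \ref{th:20220608}'', whose proof is exactly your reduction — rescale by $\lambda_n^{-1/(q-2)}$ (resp. $\lambda_n^{-1}$), observe that the coupling $\mu=\lambda_n^{-2(q-3)/(q-2)}$ (resp. $\nu=\lambda_n^{q-3}$) tends to $0^+$ when $\lambda_n\to+\infty$ in the stated ranges of $q$, and invoke Propositions \ref{2023417-p1} and \ref{2023417-p2}. Your scaling exponents and the sign checks are correct, and your remark that the rescaling carries ground states to ground states is the same (standard) point the paper also takes for granted.
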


\vskip4mm
{\section{ Uniqueness}}
\setcounter{equation}{0}

In this section, we prove the uniqueness of ground state solutions to \eqref{eq:20220603-1} provided
$\lambda > 0$ small or large enough.

First, we have the following results.
\begin{proposition}\label{p2.6} (\cite[Proposition 2.7]{louis}) Let $L_+$ be the linearized operator arising from the ground state solution $W$ of (\ref{20220609-V1}),
\beq\label{eq:20230415-e2}
L_+ \left( \xi \right)  =-\Delta\xi + \xi - (q-1)W^{q-2}\xi.
\eeq
Then $L_+$ has a null kernel in $H^1_r(\R^3)$.
\end{proposition}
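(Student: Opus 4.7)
The cleanest route is to reduce to the classical nondegeneracy theorem for the scalar Schr\"odinger equation \eqref{20220609-V1}. Weinstein's work and the uniqueness theorem of Kwong \cite{MR969899} together establish that
$$\ker(L_+) = \mathrm{span}\{\partial_{x_1} W,\, \partial_{x_2} W,\, \partial_{x_3} W\} \subset H^1(\R^3).$$
Since each $\partial_{x_i} W(x) = (x_i/|x|)\, W'(|x|)$ is odd in $x_i$ and therefore non-radial, no nontrivial linear combination belongs to $H^1_r(\R^3)$. Intersecting with the radial subspace gives $\ker(L_+) \cap H^1_r(\R^3) = \{0\}$.

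If a more self-contained argument is wanted, the plan is as follows. First, reduce to ODE: any $\xi \in H^1_r(\R^3) \cap \ker(L_+)$ is smooth and satisfies
$$-\xi''(r) - \tfrac{2}{r}\xi'(r) + \xi(r) - (q-1) W(r)^{q-2} \xi(r) = 0, \qquad r>0,$$
with regularity condition $\xi'(0)=0$ and exponential decay at infinity (standard ODE asymptotics, using $W(r) \to 0$ so the potential term becomes negligible). The space of radial solutions on $(0,\infty)$ is two-dimensional, so each boundary condition (regularity at $0$, decay at $\infty$) generically picks a one-dimensional subspace, leaving no nonzero solution unless a resonance occurs.

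Second, exploit the scaling family. For $W_\mu(x) := \mu^{2/(q-2)} W(\mu x)$ one has $-\Delta W_\mu + \mu^2 W_\mu = W_\mu^{q-1}$; differentiating at $\mu=1$ produces the radial function $\Phi := \tfrac{2}{q-2} W + r W'(r) \in H^1_r(\R^3)$, which satisfies $L_+ \Phi = -2W$. Hence $W$ lies in $\mathrm{Range}(L_+|_{H^1_r})$, which by self-adjointness forces any radial kernel element $\xi$ to be orthogonal to the strictly positive $W$.

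Third, rule out the kernel by contradiction using Kwong. If $0 \neq \xi \in H^1_r \cap \ker(L_+)$, applying the implicit function theorem to $u \mapsto -\Delta u + u - u^{q-1}$ at $u = W$ (with the Fredholm alternative supplied by the scaling identity above) yields a nontrivial smooth curve of positive radial $H^1$-solutions of \eqref{20220609-V1} through $W$, contradicting Kwong's uniqueness theorem \cite{MR969899}. The main obstacle is precisely this last step: converting ``nontrivial kernel'' into a genuine bifurcating family of positive solutions requires verifying transversality, and this is exactly what the identity $L_+ \Phi = -2W$ provides. The shortcut via the full classical nondegeneracy result (first paragraph) avoids this obstacle entirely.
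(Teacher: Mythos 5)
Your first route --- invoking the classical nondegeneracy result $\ker(L_+)=\mathrm{span}\{\partial_{x_1}W,\partial_{x_2}W,\partial_{x_3}W\}$ in $H^1(\R^3)$ and observing that these derivatives are non-radial, so the kernel meets $H^1_r(\R^3)$ only in $\{0\}$ --- is correct and is essentially the paper's own approach: the paper states this proposition by citing \cite[Proposition 2.7]{louis} and uses exactly this ``kernel is spanned by translations, which are non-radial'' reasoning in its proof of the companion Proposition \ref{p2.7} for the Schr\"odinger--Newton system. The longer ``self-contained'' sketch is unnecessary (and its last step, turning a nontrivial radial kernel into a curve of positive solutions, would need a genuine bifurcation/ODE argument beyond the scaling identity $L_+\Phi=-2W$), but since the short route stands on its own, the proof is fine.
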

We also need the uniqueness and nondegeneracy results for the Schr\"{o}dinger-Newton equation
\beq\label{2023415-e111}
  \left\{
\begin{array}{ll}
\aligned
&-\Delta u+u=vu~~\hbox{in}~\R^3,\\
&-\Delta v= u^2~~\hbox{in}~\R^3,
\endaligned
\end{array}
\right.
\eeq
which is equivalent to (\ref{20220609-VU3}).
\begin{proposition}\label{p2.7}  Let $\mathcal{L}_+$ be the linearized operator arising from the ground state solution $(U, V)$ for (\ref{2023415-e111}),
\beq\nonumber
\mathcal{L}_+ \left(\begin{matrix} \xi\\ \zeta \end{matrix} \right)  =\left(\begin{matrix} -\Delta\xi + \xi- V \xi - \zeta U \\ -\Delta \zeta- 2\xi U\end{matrix} \right).
\eeq
Then $\mathcal{L}_+$ has a null
kernel in $H^1_r(\R^3)\times H^1_r(\R^3)$.
\end{proposition}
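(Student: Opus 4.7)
The plan is to reduce the system kernel problem to the scalar nondegeneracy of the Choquard (Hartree) equation, for which Lenzmann's theorem (already cited in the excerpt as \cite{Lenzmann2009}) directly applies. Given $(\xi,\zeta)\in H^1_r(\R^3)\times H^1_r(\R^3)$ with $\mathcal{L}_+(\xi,\zeta)=0$, the system reads
\begin{equation*}
-\Delta \xi + \xi = V\xi + \zeta U, \qquad -\Delta \zeta = 2\xi U.
\end{equation*}

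First I would eliminate $\zeta$. Since $\xi\in H^1_r(\R^3)\subset L^2\cap L^6$ and $U\in L^\infty(\R^3)$ with the usual exponential decay of ground states for \eqref{2023415-e111}, one checks $\xi U\in L^{6/5}(\R^3)$, so the Hardy-Littlewood-Sobolev inequality gives $2I_2\star(\xi U)\in L^6(\R^3)\cap\dot H^1(\R^3)$. Both $\zeta$ and $2I_2\star(\xi U)$ lie in $H^1(\R^3)$, solve $-\Delta w = 2\xi U$ in $\R^3$, and vanish at infinity; by uniqueness in this class,
\begin{equation*}
\zeta = 2\,I_2\star(\xi U).
\end{equation*}

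Substituting into the first equation and using $V=I_2\star U^2$ yields the scalar equation
\begin{equation*}
\mathcal{L}^{\mathrm{Ch}}_+\xi := -\Delta \xi + \xi - (I_2\star U^2)\xi - 2U\bigl(I_2\star(\xi U)\bigr) = 0,
\end{equation*}
which is precisely the linearization of the Choquard equation \eqref{20220609-VU3} around its ground state $U$. By Lenzmann's nondegeneracy theorem, $\mathrm{Ker}(\mathcal{L}^{\mathrm{Ch}}_+)=\mathrm{span}\{\partial_{x_i}U:i=1,2,3\}$ in $H^1(\R^3)$. Since each $\partial_{x_i}U$ is antisymmetric in $x_i$ and therefore not radial, the intersection of this kernel with $H^1_r(\R^3)$ is trivial. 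Hence $\xi\equiv 0$, and then $\zeta=2I_2\star(\xi U)\equiv 0$.

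The argument is essentially a one-line reduction, and I expect no genuine obstacle beyond verifying the integrability needed for the representation $\zeta=2I_2\star(\xi U)$; the nontrivial analytic input is entirely encapsulated in Lenzmann's scalar nondegeneracy result, whose radial part follows from the standard Perron-Frobenius/ODE analysis of the spherical-harmonic decomposition of $\mathcal{L}^{\mathrm{Ch}}_+$.
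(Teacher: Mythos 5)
Your proof is correct and follows essentially the same route as the paper: both reduce the statement to the known nondegeneracy of the Choquard ground state $U$ (Lenzmann, Tod--Moroz, Wei--Winter) and conclude by observing that the translation modes $\partial_i U$ (resp.\ $(\partial_i U,\partial_i V)$) are non-radial, so the radial kernel is trivial. The only difference is that you spell out the elimination $\zeta = 2\,I_2\star(\xi U)$ that converts the system kernel into the scalar linearized Choquard operator, a step the paper leaves implicit when it cites the kernel description $\mathrm{Ker}\,\mathcal{L}_+=\mathrm{span}\{(\partial_i U,\partial_i V)\}$ directly.
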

\bp
By the nondegeneracy result in \cite{Lenzmann2009,MR1677740,MR2492602}, we have
$$\text{Ker}\mathcal{L}_+=\text{span}\{(\partial_i U, \partial_i V), i = 1,2, 3\}.$$
Since $\partial_i U, \partial_i V$ are non-radial symmetric function, thus $\mathcal{L}_+$ has a null
kernel in $H^1_r(\R^3)\times H^1_r(\R^3)$.
\ep

Now, we prove Theorem \ref{th1.2}.
\begin{proof}
(i) We first consider the case where $\lambda> 0$ is small. We argue by contradiction
and suppose there exist two families of positive solutions $(u^{(1)}_\lambda, v^{(1)}_\lambda)$
and $(u^{(2)}_\lambda, v^{(2)}_\lambda)$ to \eqref{eq:20220603-1} with $\lambda\to 0^+$.
\par
Case $q\in(2,3)$. Let
$$\tilde{u}^{(i)}_\lambda(x):=\lambda^{-\frac{1}{q-2}}u^{(i)}_\lambda(\lambda^{-\frac{1}{2}}x),\quad \tilde{v}^{(i)}_\lambda(x):=\lambda^{-\frac{1}{q-2}}v^{(i)}_\lambda(\lambda^{-\frac{1}{2}}x),\quad i=1,2.$$
Then $(\tilde{u}^{(i)}_\lambda, \tilde{v}^{(i)}_\lambda)\in H^1_r(\R^3)\times\dot{H}^1(\R^3)$ $(i=1,2)$ are two families of positive radial solutions to
\beq\nonumber
\left\{
\begin{array}{ll}
\aligned
&-\Delta u+u=\lambda^{-\frac{q-3}{q-2}}vu+u^{q-1}~~\text{in}~~\R^3,\\
&-\Delta v=\lambda^{-\frac{q-3}{q-2}}u^2.
\endaligned
\end{array}
\right.
\eeq
By Theorem \ref{th4.3} and Theorem \ref{th:20220608}, one has
$$\tilde{u}^{(i)}_\lambda (x) \to W~\text{as}~\lambda\to 0^+~\text{both~in}~C_{r,0}(\R^3)~\text{and~in}~H^1(\R^3),~~ i = 1, 2.$$
Define
$$\xi_\lambda:=\frac{\tilde{u}^{(1)}_\lambda-\tilde{u}^{(2)}_\lambda}
{\|\tilde{u}^{(1)}_\lambda-\tilde{u}^{(2)}_\lambda\|_\infty+\|\tilde{v}^{(1)}_\lambda-\tilde{v}^{(2)}_\lambda\|_\infty},
\quad \zeta_\lambda:=\frac{\tilde{v}^{(1)}_\lambda-\tilde{v}^{(2)}_\lambda}
{\|\tilde{u}^{(1)}_\lambda-\tilde{u}^{(2)}_\lambda\|_\infty+\|\tilde{v}^{(1)}_\lambda-\tilde{v}^{(2)}_\lambda\|_\infty}.$$
Then $\|\xi_\lambda\|_\infty+\|\zeta_\lambda\|_\infty=1$.
By mean value theorem, for any $x\in\R^3$, there exists some
$\theta(x)\in [0, 1]$ such that
\begin{equation*}
\aligned
(\tilde{u}^{(1)}_\lambda)^{q-1}-(\tilde{u}^{(2)}_\lambda)^{q-1}=(q-1)\left(\theta(x)\tilde{u}^{(1)}_\lambda+(1-\theta(x))\tilde{u}^{(2)}_\lambda\right)^{q-2}(\tilde{u}^{(1)}_\lambda-\tilde{u}^{(2)}_\lambda).
 \endaligned
\end{equation*}
Then by
\begin{equation*}
\aligned
\tilde{v}^{(1)}_\lambda\tilde{u}^{(1)}_\lambda-\tilde{v}^{(2)}_\lambda\tilde{u}^{(2)}_\lambda
=\tilde{v}^{(1)}_\lambda(\tilde{u}^{(1)}_\lambda-\tilde{u}^{(2)}_\lambda)
+(\tilde{v}^{(1)}_\lambda-\tilde{v}^{(2)}_\lambda)\tilde{u}^{(2)}_\lambda,
 \endaligned
\end{equation*}
we have
\begin{equation}\label{5.1}
\left\{
\begin{array}{ll}
\aligned
&-\Delta\xi_\lambda = -\xi_\lambda +\lambda^{-\frac{q-3}{q-2}}
\tilde{v}^{(1)}_\lambda\xi_\lambda+\lambda^{-\frac{q-3}{q-2}}\zeta_\lambda\tilde{u}^{(2)}_\lambda+(q-1)\left(\theta(x)\tilde{u}^{(1)}_\lambda+(1-\theta(x))\tilde{u}^{(2)}_\lambda\right)^{q-2}\xi_\lambda,\\
&-\Delta \zeta_\lambda=\lambda^{-\frac{q-3}{q-2}}\left(\tilde{u}^{(1)}_\lambda+\tilde{u}^{(2)}_\lambda\right)\xi_\lambda.
\endaligned
\end{array}
\right.
\end{equation}
By Lemma \ref{lemma:20220423-bl1}, for $i=1,2$ $$\left\|\tilde{u}^{(i)}_\lambda\right\|_\infty=\left\|\frac{u^{(i)}_\lambda(\lambda^{-\frac{1}{2}}x)}{\lambda^{\frac{1}{q-2}}}\right\|_\infty
~~\text{and}~~ \left\|\tilde{v}^{(i)}_\lambda\right\|_\infty=\left\|\frac{v^{(i)}_\lambda(\lambda^{-\frac{1}{2}}x)}{\lambda^{\frac{1}{q-2}}}\right\|_\infty$$
are uniformly bounded as $\lambda\to0^+$.
Then by the facts that $\|\xi_\lambda\|_\infty, \|\zeta_\lambda\|_\infty \leq 1$, $\theta(x)\in [0, 1]$ and $\tilde{u}^{(i)}_\lambda\to W$ in $C_{r,0}(\R^3)$, one can see that the right hand side of (\ref{5.1}) is in $L^\infty(\R^3)$. Hence, passing
to a subsequence if necessary, we can assume that
$$\xi_\lambda \to\xi,~\zeta_\lambda \to\zeta,~~\text{in}~ C^2_{\text{loc}}(\R^3),$$
where $\xi$ is a
radial bounded function satisfying
\begin{equation}\nonumber
\left\{
\begin{array}{ll}
\aligned
&-\Delta\xi + \xi =(q-1) W^{q-2}\xi,\\
&-\Delta \zeta=0.
\endaligned
\end{array}
\right.
\end{equation}
Then $\|\xi\|_\infty= 1$ and $\|\zeta\|_\infty= 0$. Standard elliptic estimates imply that $\xi$ is a strong solution. Then
by the decay of $W$ and applying a comparison principle, we obtain that $\xi$ is exponentially decaying to $0$ as $|x|\to\infty$. Hence, $\xi\in C_{r,0}(\R^3)\cap H^1_r(\R^3).$ At this point,
Proposition \ref{p2.6} provides a contradiction.
\par
Case $q\in(3,6)$: Let
$$\bar{u}^{(i)}_\lambda:=\lambda^{-1}u^{(i)}_\lambda(\lambda^{-\frac{1}{2}}x),~~ \bar{v}^{(i)}_\lambda:=\lambda^{-1}v^{(i)}_\lambda(\lambda^{-\frac{1}{2}}x),\quad i=1,2.$$
Then $(\bar{u}^{(i)}_\lambda, \bar{v}^{(i)}_\lambda)\in H^1_r(\R^3)\times \dot{H}^1_r(\R^3)$ $(i=1,2)$ are two families of positive radial solutions to
\beq\nonumber
\left\{
\begin{array}{ll}
\aligned
&-\Delta u+u=vu+\lambda^{q-3}u^{q-1}~~\text{in}~~\R^3,\\
&-\Delta v=u^2.
\endaligned
\end{array}
\right.
\eeq
By Theorem \ref{th4.3} and Theorem \ref{th:20220608}, one has
$$\bar{u}^{(i)}_\lambda (x) \to U~\text{as}~\lambda\to 0^+~\text{both~in}~C_{r,0}(\R^3)~\text{and~in}~H^1(\R^3),~~ i = 1, 2.$$
By Lemma \ref{lemma:20220610-bl1}-(ii), $\bar{v}^{(i)}_\lambda$ is bounded in $L^\infty(\R^3)$, then up to a subsequence,
$$\bar{v}^{(i)}_\lambda (x) \to V~\text{as}~\lambda\to 0^+~\text{both~in}~C_{loc}^2(\R^3),~~ i = 1, 2.$$
We study the normalization
$$\xi_\lambda:=\frac{\bar{u}^{(1)}_\lambda-\bar{u}^{(2)}_\lambda}
{\|\bar{u}^{(1)}_\lambda-\bar{u}^{(2)}_\lambda\|_\infty+\|\bar{v}^{(1)}_\lambda-\bar{v}^{(2)}_\lambda\|_\infty}
~~\text{and}~~\zeta_\lambda:=\frac{\bar{v}^{(1)}_\lambda-\bar{v}^{(2)}_\lambda}
{\|\bar{u}^{(1)}_\lambda-\bar{u}^{(2)}_\lambda\|_\infty+\|\bar{v}^{(1)}_\lambda-\bar{v}^{(2)}_\lambda\|_\infty}.$$
Similar to Case $q\in(2,3)$, passing
to a subsequence if necessary, we can assume that
$$(\xi_\lambda,\zeta_\lambda) \to(\xi,\zeta)~~\text{in}~ C^2_{\text{loc}}(\R^3)\times C^2_{\text{loc}}(\R^3),$$
where $(\xi,\zeta)$ is
radial bounded function satisfying
\beq\nonumber
\left\{
\begin{array}{ll}
\aligned
&-\Delta \xi+\xi=V\xi+U\zeta~~\text{in}~~\R^3,\\
&-\Delta \zeta=2U\xi.
\endaligned
\end{array}
\right.
\eeq
Since $\|\xi\|_\infty+\|\zeta\|_\infty= 1$, standard elliptic estimates imply that $(\xi,\zeta)$ is a strong solution. Then by the decay of $U$ and applying a comparison principle, we obtain that $\xi$ is exponentially decaying to $0$ as $|x|\to\infty$. Hence, $\xi\in C_{r,0}(\R^3)\cap H^1_r(\R^3).$ At this point,
Proposition \ref{p2.7} provides a contradiction.

(ii) Now we consider the case where $\lambda> 0$ is large.
\par
Case $q\in(3,6)$: the proof of uniqueness is similar to the case $q\in(2,3)$ in (i).
\par
Case $q\in(2,3)$: the proof of uniqueness is similar to the case $q\in(3,6)$ in (i).
\end{proof}

\vskip4mm
{\section{Nondegeneracy}}
 \setcounter{equation}{0}

{\subsection{ Decomposition into spherical harmonics  }

In this section, we assume that $u_{\lambda}\in\mathcal{M}_\lambda$ and we prove
that it is nondegenerate for $\lambda$ sufficiently close to $0$ or $+\infty$, where $\mathcal{M}_\lambda$ is the set of nontrivial solutions defined in \eqref{20230417-e2}. For this, we denote by $\perp_{H^1}$ and $\perp_{\dot{H}^1}$ the
orthogonality relation in $H^1(\R^3)$ and $\dot{H}^1(\R^3)$ respectively.

By \cite{Lenzmann2009}, for linearized operators $L_+$ arising from ground states $W$ for NLS with local nonlinearities,
it is a well-known fact that $Ker{L_+} = \{0\}$ when $L_+$ is restricted to radial functions implies that $Ker{L_+}$ is
spanned by $\{\partial_i W\}_{i=1}^3$.

The proof, however, involves some Sturm-Liouville theory which is not applicable to $\mathcal{L}_\lambda^+$ given in \eqref{eq:20220913-e2},
due to the presence of the nonlocal term. Also, recall that Newton's theorem \cite[(9.7.5)]{LiebLoss.2001} is not at our disposal, since we do not restrict ourselves to radial functions anymore. To overcome this difficulty, we have
to develop Perron-Frobenius-type arguments for the action of $\mathcal{L}_\lambda^+$ with respect to decomposition into spherical harmonics.

Now, let $u_{\lambda_n}$ be positive ground state solution for equation \eqref{eq:20220420-e1} with $\lambda=\lambda_n$. Then
 $\tilde{u}_n(x)=\lambda_{n}^{-\frac{1}{q-2}}u_n(x/\sqrt{\lambda_n}):=u_{\mu_n}(x)$ satisfy
 \beq\nonumber
-\Delta u+ u=\mu_n\left(I_2\star u^{2}\right)u+ u^{q-1} \quad \text{in}~\R^3,
\eeq
and $\bar{u}_n(x)=\lambda_n^{-1}u_n(x/\sqrt{\lambda_n}):=u_{\nu_n}(x)$ satisfy
 \beq\nonumber
-\Delta u+ u= \left(I_2\star u^{2}\right)u+ \nu_n u^{q-1} \quad \text{in}~\R^3.
\eeq
where $\mu_n=\lambda_n^{2\frac{3-q}{q-2}} $, $\nu_n=\lambda_n^{q-3}$.

Recall that from Theorem \ref{th:20220608} and Theorem \ref{th5.4}, we have
\begin{itemize}
\item[(i)] if $2<q<3$, $u_{\mu_n}\to W$ in $H^1(\R^3)$, as $\lambda_n\to0^+$;
\item[(ii)] if $3<q<6$, $u_{\nu_n}\to U$ in $H^1(\R^3)$, as $\lambda_n\to0^+$;
\item[(iii)] if $3<q<6$, $u_{\mu_n}\to W$ in $H^1(\R^3)$, as $\lambda_n\to+\infty$;
\item[(iv)] if $2<q<3$, $u_{\nu_n}\to U$ in $H^1(\R^3)$, as $\lambda_n\to+\infty$,
\end{itemize}
where $W$ is the unique positive solution for the Schr\"{o}dinger equation \eqref{20220609-V1},
$U$ is the unique positive solution for the Schr\"{o}dinger-Newton equation \eqref{20220609-VU3}.

Let $(U_\nu, V_\nu) = (u_\nu(|x|), v_\nu(|x|))$ be a ground state for
\beq\label{eq:20230412-e2}
  \left\{
\begin{array}{ll}
\aligned
&-\Delta u+ u=2 vu+\nu u^{q-1}~~\hbox{in}~\R^3,\\
&-\Delta v= u^2~~\hbox{in}~\R^3.
\endaligned
\end{array}
\right.
\eeq
And Let $(U_\mu, V_\mu)$ be the positive ground state for
\beq\label{eq:20230415-e12}
  \left\{
\begin{array}{ll}
\aligned
&-\Delta u+ u=2\mu vu+ u^{q-1}~~\hbox{in}~\R^3,\\
&-\Delta v= \mu u^2~~\hbox{in}~\R^3.
\endaligned
\end{array}
\right.
\eeq
To show the nondegeneracy of the ground state solution $(U_\lambda, V_\lambda)$ for \eqref{eq:20220603-1} as $\lambda$ close to $0$ or $+\infty$, it is suffice to prove the nondegeneracy of $(U_\nu, V_\nu)$ and $(U_\mu, V_\mu)$ as $\nu$ and $\mu$ close to $0$, respectively.
For this, from now on, we will use the uniqueness and nondegeneracy results for the Schr\"{o}dinger equation \eqref{20220609-V1} and the Schr\"{o}dinger-Newton equation \eqref{20220609-VU3}.
 Namely, we recall that there exists a unique radial ground state $W$ for \eqref{20220609-V1} such that
\begin{equation*}\label{eq:20221118-2}
Ker(L_+)=span\{\partial_jW,j=1,2,3\},
\end{equation*}
where the linear operator $L_+$ associated to $W$ is defined by \eqref{eq:20230415-e2}.
And we also recall from \cite{Lieb1976/77,Ma2010,MR1677740,MR2492602} that there exists a unique radial ground state $(U, V)$ for
\beq\label{20221223-e111}
  \left\{
\begin{array}{ll}
\aligned
&-\Delta u+u=2 vu~~\hbox{in}~\R^3,\\
&-\Delta v= u^2
\endaligned
\end{array}
\right.
\eeq
such that
\beq\label{eq:20221118-2}
Ker(\mathcal{L}_+)=span\{(\partial_jU,\partial_jV),j=1,2,3\},
\eeq
where the linear operator $\mathcal{L}_+$ associated to $(U, V)$ is defined by
  \beq\label{eq:20230415-e3}
\mathcal{L}_+ \left(\begin{matrix} \xi\\ \zeta \end{matrix} \right)  =\left(\begin{matrix} -\Delta\xi + \xi- 2V \xi - 2\zeta U \\ -\Delta \zeta- 2\xi U\end{matrix} \right).
\eeq

\begin{remark}
Previously, we use $(U,V)$ to denote the unique radial ground state solution of the Schr\"{o}dinger-Newton equation \eqref{2023415-e111}.
Indeed, the unique radial ground state solution solution for system \eqref{20221223-e111} is $(\frac{1}{\sqrt{2}}U, V)$, system \eqref{2023415-e111} is equivalent to \eqref{20221223-e111} in the scaling sense. We also write the unique solution for system \eqref{20221223-e111} as $(U,V)$. We use the system \eqref{20221223-e111} instead of system \eqref{2023415-e111} to simplify the representation of the energy functional.
\end{remark}

 Define the linear operator $\mathcal{L}_{\nu}^+$ associated to $(U_{\nu}, V_{\nu})$ by
  \beq\nonumber
\mathcal{L}_{\nu}^+ \left(\begin{matrix} \xi\\ \zeta \end{matrix} \right)  =\left(\begin{matrix} -\Delta\xi + \xi- 2V_{\nu} \xi - 2\zeta U_{\nu}-\nu(q-1)U_\nu^{q-2}\xi \\ -\Delta \zeta- 2\xi U_{\nu}\end{matrix} \right).
\eeq
Define the energy functional $I_{\nu}: H^1(\R^3)\times \dot{H}^1(\R^3) \mapsto \R$ for \eqref{eq:20230412-e2} as
\beq\nonumber
I_{\nu}(u,v):=\frac{1}{2}\|u\|_{H^1}^2+\frac{1}{2}\|v\|_{\dot{H}^1}^2-\int_{\R^3}u^2 v dx-\frac{\nu}{q}\int_{\R^3}u^q dx.
\eeq
Moreover, for any $\varphi, \psi \in H^1(\R^3)$,
$$ \langle I'_{\nu}(u,v),(\varphi,\psi)\rangle=(u,\varphi)_{H^1}+(v,\psi)_{\dot{H}^1}-2\int_{\R^3}uv\varphi dx-\int_{\R^3}u^2\psi dx-\nu\int_{\R^3}u^{q-1}\varphi dx.
$$
And the second order Gateaux derivative
$I''_{\nu} (u_{\nu}, v_{\nu} )$ possess the following property.
\bl\label{3.71} For every $\varphi \perp_{H^1} u_{\nu}$ and $\psi\perp_{\dot{H}^1} v_{\nu}$ we have that
\beq\nonumber
\aligned
0 \leq &I''_{\nu} (u_{\nu}, v_{\nu} )[(\varphi,\psi), (\varphi,\psi)] \\
= &\|\varphi\|^2_{H^1}+\|\psi\|_{\dot{H}^1}^2-2\int_{\R^3} v_{\nu}\varphi^2 dx-4\int_{\R^3} u_{\nu}\varphi \psi dx-\nu(q-1)\int_{\R^3} u_\nu^{q-2}\varphi^2 dx.
\endaligned
\eeq
\el
\bp
Let $\varepsilon>0$. Since $\varphi \perp_{H^1} u_{\nu}$ and $\psi\perp_{\dot{H}^{1}} v_{\nu}$, we have
\beq\label{eq:20221120-12}
\|\varepsilon \varphi+u_{\nu}\|^2_{H^1}={\varepsilon}^2 \|\varphi\|^2_{H^1} +\|u_{\nu}\|^2_{H^1}, \quad \|\varepsilon \psi+v_{\nu}\|^2_{\dot{H}^{1}}={\varepsilon}^2 \|\psi\|^2_{\dot{H}^{1}} +\|v_{\nu}\|^2_{\dot{H}^{1}}.
\eeq
Moreover, by using the system \eqref{eq:20230412-e2}, we have
$$\int_{\R^3} 2v_{\nu} u_{\nu} \varphi+\nu u_\nu^{q-1}\varphi dx =0,\quad \int_{\R^3} u_{\nu}^2 \psi dx=0,$$
and thus by Taylor expansion,
\beq\label{eq:20221120-21}
\aligned
&\int_{\R^3}(\varepsilon\psi+v_{\nu}) (\varepsilon\varphi+u_{\nu})^2dx+\frac{\nu}{q}\int_{\R^3}|\varepsilon \varphi+u_{\nu}|^{q} dx\\
=&\int_{\R^3} v_{\nu} u_{\nu}^2 dx+\varepsilon^3\int_{\R^3} \varphi^2\psi dx +2\varepsilon^2\int_{\R^3}\psi\varphi u_{\nu}dx  \\
&+\varepsilon^2 \int_{\R^3}\varphi^2 v_{\nu} dx+\varepsilon\int_{\R^3} u_{\nu}^2\psi dx +2\varepsilon\int_{\R^3} u_{\nu}\varphi v_{\nu} dx\\
&+\frac{\nu}{q}\int_{\R^3} u_{\nu}^{q}dx+\varepsilon\nu\int_{\R^3} u_\nu^{q-1}\varphi dx+\frac{(q-1)\nu\varepsilon^2}{2}\int_{\R^3} u_\nu^{q-2}\varphi^2 dx +o(\varepsilon^2)\\
=&\int_{\R^3} v_{\nu} u_{\nu}^2 dx+\varepsilon^3\int_{\R^3} \varphi^2\psi dx+
2\varepsilon^2\int_{\R^3}\psi\varphi u_{\nu} dx+\varepsilon^2 \int_{\R^3}\varphi^2 v_{\nu} dx+o(\varepsilon^2).
\endaligned
\eeq
From \eqref{eq:20221120-21} and \eqref{eq:20221120-12} we obtain
\begin{align*}
&\frac{1}{2}\|\varepsilon \varphi+u_{\nu}\|^2_{H^1}+\frac{1}{2}\|\varepsilon \psi+v_{\nu}\|^2_{\dot{H}^{1}}-\int_{\R^3}(\varepsilon\psi+v_{\nu})|\varepsilon \varphi+u_{\nu}|^{2}dx-\frac{\nu}{q}\int_{\R^3}|\varepsilon \varphi+u_{\nu}|^{q}dx \\
=&\frac{1}{2}\|u_{\nu}\|^2_{H^1}+\frac{1}{2}\|v_{\nu}\|^2_{\dot{H}^{1}}-\int_{\R^3} v_{\nu} u_{\nu}^2 dx-\frac{\nu}{q}\int_{\R^3} u_{\nu}^{q}dx \\
&+\frac{\varepsilon^2}{2}\left(\| \varphi\|^2_{H^1}+ \|\psi\|^2_{\dot{H}^{1}}-4\int_{\R^3}\psi\varphi u_{\nu} dx-2 \int_{\R^3}\varphi^2 v_{\nu} dx-(q-1)\nu\int_{\R^3} u_\nu^{q-2}\varphi^2 dx\right)+o(\varepsilon^2).
\notag
\end{align*}
Then the desired result follows since the ground state $(u_{\nu}, v_{\nu})$ attains the minimal of $I_\nu(u,v)$.
\ep

\begin{corollary}\label{3.8} For any $(h,l)\in H^1(\R_{+};r^{2})\times H^1(\R_+; r^{2})$
\beq\label{eq:20221120-4e}
\aligned
	&A_1((h,l),(h,l))\\
:=&\int_{\R_{+}} h^2_r   r^{2}dr +  2\int_{\R_{+}} h^2  dr  +\int_{\R_+}h^2 r^{2}dr \\
	&+ \int_{\R_+} l_r^2 r^{2} dr+2\int_{\R_+}l^2 dr-2  \int_{\R_+}v_{\nu} h^2 r^{2}dr-4  \int_{\R_+}h l u_{\nu}  r^{2}dr \\
&-\nu(q-1)\int_{\R_+} u_{\nu}^{q-2}h^2 r^{2}dr \\
\geq &0.
\endaligned
\eeq
\end{corollary}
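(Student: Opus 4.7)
The plan is to realize $A_1((h,l),(h,l))$ as the second variation $I''_\nu(u_\nu,v_\nu)[(\varphi,\psi),(\varphi,\psi)]$ evaluated on a carefully chosen pair of first-order spherical harmonics, and then invoke Lemma~\ref{3.71}. Concretely, I would set
\[
\varphi(x) := h(|x|)\,\frac{x_1}{|x|}, \qquad \psi(x) := l(|x|)\,\frac{x_1}{|x|},
\]
so that each of $\varphi,\psi$ is a product of a radial profile and the first-order spherical harmonic $\omega_1 = x_1/|x|$. I must then show (i) that the orthogonality hypotheses of Lemma~\ref{3.71} are satisfied automatically, and (ii) that in polar coordinates the second variation of $I_\nu$ in this direction reproduces $A_1$ up to the universal constant $4\pi/3$.

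For (i), since $(u_\nu,v_\nu)$ is radial by Proposition~\ref{p2.2}, while $\varphi,\psi$ are odd in $x_1$, every integrand appearing in $(\varphi,u_\nu)_{H^1}$ and in $(\psi,v_\nu)_{\dot{H}^1}$ is odd in $x_1$, so both inner products vanish by symmetry. For (ii) I would use $\int_{S^2}\omega_1^2\,d\sigma = 4\pi/3$ together with the standard spherical-harmonic formula
\[
\int_{\R^3}|\nabla(h(|x|)Y(\omega))|^2\,dx = \|Y\|_{L^2(S^2)}^2\left(\int_0^\infty (h')^2 r^2\,dr + \ell(\ell+1)\int_0^\infty h^2\,dr\right),
\]
specialized to $Y(\omega)=\omega_1$, so that $\ell=1$ and $\ell(\ell+1)=2$; this is precisely where the two summands $2\int_0^\infty h^2\,dr$ and $2\int_0^\infty l^2\,dr$ (without the $r^2$ weight) in $A_1$ come from. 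The local integrals $\int v_\nu\varphi^2$, $\int u_\nu\varphi\psi$, and $\int u_\nu^{q-2}\varphi^2$ reduce in the same way, each producing the angular factor $4\pi/3$ and a weighted radial integral with the radial profiles $u_\nu(r)$, $v_\nu(r)$. Summing the five terms in $I''_\nu[(\varphi,\psi),(\varphi,\psi)]$ yields $I''_\nu[(\varphi,\psi),(\varphi,\psi)] = \tfrac{4\pi}{3}A_1((h,l),(h,l))$, and the inequality of Lemma~\ref{3.71} then gives $A_1\geq 0$ after dividing by the positive constant.

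The only genuinely delicate point is integrability: the identity $\int_{\R^3}\varphi^2/|x|^2\,dx = \tfrac{4\pi}{3}\int_0^\infty h^2\,dr$ shows that the unweighted $L^2$-integrals of $h$ and $l$ must be finite for $\varphi$ and $\psi$ to lie in $H^1(\R^3)$ and $\dot{H}^1(\R^3)$, which is an assumption tacitly built into the definition of $A_1$ as displayed (since the unweighted terms $2\int h^2\,dr$ and $2\int l^2\,dr$ appear explicitly). For pairs $(h,l)$ failing this additional control the right-hand side of \eqref{eq:20221120-4e} is simply $+\infty$ and the inequality is trivial; otherwise the computation above is rigorous, and a standard density argument with $C_c^\infty(\R_+\setminus\{0\})$ test profiles removes any remaining regularity concerns, since for such profiles $\varphi$ and $\psi$ are smooth and compactly supported away from the origin.
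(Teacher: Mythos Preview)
Your proposal is correct and follows essentially the same route as the paper: the paper also tests the second variation against $\Phi_i(x)=h(|x|)\frac{x^i}{|x|}$, $\Psi_i(x)=l(|x|)\frac{x^i}{|x|}$, checks $\Phi_i\perp_{H^1}u_\nu$ and $\Psi_i\perp_{\dot{H}^1}v_\nu$ by odd symmetry, and then invokes Lemma~\ref{3.71}. The only cosmetic difference is that the paper sums over $i=1,2,3$ (using the pointwise identity $\sum_i|\nabla\Phi_i|^2=h_r^2+2h^2/r^2$, which produces the angular factor $4\pi$), whereas you work with a single direction $i=1$ and the factor $4\pi/3$; either choice yields $A_1\geq 0$ after dividing by the positive constant.
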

\bp
Let $h\in H^1(\R_{+}; r^{2})$, $l\in H^1(\R_+; r^{2})$ and define
$$\Phi_i(x):=h(|x|)\frac{x^i}{|x|}, \quad \Psi_i(x):=l(|x|)\frac{x^i}{|x|},\quad i=1,2,3.$$
By a direct computation,
\beq\label{eq:20221221-1}
\aligned
\sum_{i}|\nabla \Phi_i|^2=h_r^2+2\frac{h^2}{r^2}, \quad\sum_{i}|\nabla \Psi_i|^2=l_r^2+2\frac{l^2}{r^2},\quad
\sum_{i} \Phi_i \Psi_i=h_r l_r.
\endaligned
\eeq
Since $u_{\nu}$ and $v_{\nu}$ are radial, then by odd symmetry we have
 $$\int_{\R^3}\nabla\Phi_i \nabla u_{\nu} dx=-\int_{\R^3}\Phi_i \Delta u_{\nu} dx=0 ,\quad \int_{\R^3}\Phi_i u_{\nu} dx=0 ,$$
 $$\int_{\R^3}\nabla\Psi_i \nabla v_{\nu} dx=-\int_{\R^3}\Psi_i \Delta v_{\nu} dx=0,$$
 and so $\Phi_i\perp_{H^1} u_{\nu}, \Psi_i\perp_{\dot{H}^1} v_{\nu}$. Then Lemma \ref{3.71} and \eqref{eq:20221221-1} yield \eqref{eq:20221120-4e}.
\ep

Let $\theta= \frac{x}{|x|} \in \mathbb{S}^2$, the unit sphere in $\R^3$. Let $\Delta_r$ be
the Laplacian operator in radial coordinates and $\Delta_{\mathbb{S}^2}$ the Laplacian-Beltrami operator. We recall that
$$\Delta u = \Delta_ru + \frac{1}{r^2}\Delta_{\mathbb{S}^2}u,$$
and we consider the spherical harmonics on $\R^3$, i.e., the solution of the classical eigenvalue problem
$$-\Delta_{\mathbb{S}^{2}}Y^i_k =\lambda_k Y^i_k~~~\text{on}~\mathbb{S}^{2},\quad k\in\mathbb{N}.$$
Let $n_k$ be the multiplicity of $\lambda_k$.
\begin{proposition}\label{20221223-e1}(\cite{GH}) The eigenvalue $\lambda_k=k(k + 1)$ for $k\in\mathbb{N}$.
$$n_0=1,~~ Y_0=Const;\quad \quad n_1=3, ~~Y^i_1=\frac{x^i}{|x|}~\text{for}~i=1,2,3,$$
and
\begin{equation*}
\langle Y_k^i , Y_k^j \rangle_{L^2(\mathbb{S}^{2})} =
\left\{
\begin{array}{ll}
\aligned
&1, ~~~\text{if}~i = j;\\
&0, ~~~\text{if}~ i \neq j.
\endaligned
\end{array}
\right.
\end{equation*}
\end{proposition}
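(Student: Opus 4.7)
The plan is to identify the eigenfunctions of $-\Delta_{\mathbb{S}^2}$ with restrictions to $\mathbb{S}^2$ of homogeneous harmonic polynomials on $\R^3$, and extract the eigenvalue, multiplicity, and orthogonality information from this identification. This is the classical route, and the reference (GH) pursues essentially this strategy.

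First I would use the polar decomposition $\Delta = \partial_{rr} + \frac{2}{r}\partial_r + \frac{1}{r^2}\Delta_{\mathbb{S}^2}$ on $\R^3\setminus\{0\}$. Writing any homogeneous polynomial of degree $k$ as $P(x)=r^k Y(\theta)$ with $r=|x|$ and $\theta=x/|x|$, a direct calculation gives
\[
\Delta P = r^{k-2}\bigl(k(k+1)Y + \Delta_{\mathbb{S}^2}Y\bigr).
\]
Hence $P$ is harmonic on $\R^3$ if and only if $-\Delta_{\mathbb{S}^2}Y = k(k+1)Y$, which already yields the eigenvalue $\lambda_k=k(k+1)$ and produces explicit eigenfunctions as boundary traces of harmonic polynomials.

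Next I would count multiplicities by counting linearly independent homogeneous harmonic polynomials of each degree. In degree $0$ only the constants are harmonic, so $n_0=1$ and $Y_0$ is a constant. In degree $1$ every linear function $\sum a_i x^i$ is harmonic, and $x^1,x^2,x^3$ are linearly independent, so $n_1=3$ and upon restriction to $\mathbb{S}^2$ one obtains $Y_1^i = x^i/|x|$ for $i=1,2,3$. To know that $\{\lambda_k\}_{k\in\N}$ exhausts the spectrum of $-\Delta_{\mathbb{S}^2}$, I would invoke the classical algebraic decomposition of homogeneous polynomials of degree $k$ into a direct sum of the form (harmonic of degree $k$)$\oplus |x|^2\cdot$(polynomial of degree $k-2$), together with Stone--Weierstrass density of polynomials in $C(\mathbb{S}^2)\subset L^2(\mathbb{S}^2)$; orthogonality of eigenspaces belonging to distinct eigenvalues is automatic from self-adjointness of $-\Delta_{\mathbb{S}^2}$.

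Finally, for orthonormality within the $k=1$ level, I would use symmetry of $\mathbb{S}^2$: the reflection $\theta^i\mapsto -\theta^i$ (fixing the others) sends $\theta^i\theta^j$ ($i\ne j$) to its negative, so $\int_{\mathbb{S}^2}\theta^i\theta^j\,dS=0$; and rotational symmetry forces $\int_{\mathbb{S}^2}(\theta^i)^2\,dS = \tfrac{1}{3}\int_{\mathbb{S}^2}|\theta|^2\,dS = \tfrac{4\pi}{3}$ independent of $i$, so after normalization the stated orthonormality holds. There is no real obstacle here, since every step is classical; the only delicate point is the completeness argument for higher $k$, which can simply be cited from (GH).
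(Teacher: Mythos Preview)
Your proposal is correct and follows the standard classical route to the spectral theory of $-\Delta_{\mathbb{S}^2}$. However, the paper does not prove this proposition at all: it is stated with a citation to Groemer's monograph \cite{GH} and used as a black box. So there is no ``paper's own proof'' to compare against---you have supplied a proof where the paper simply invokes a reference. Your outline is exactly what one finds in \cite{GH} or any text on spherical harmonics, so in that sense you are aligned with the cited source, just more explicit than the paper itself.
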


\bl\label{3.9} Let $(\varphi, \psi)\in \text{Ker}( I''_{\nu} (u_{\nu}, v_{\nu} ))$. Then
$$\varphi = \varphi_0(|x|) +\sum_{i=1}^3 c^i\partial_i u_{\nu},\quad
\psi = \psi_0(|x|) +\sum_{i=1}^3 c^i\partial_i v_{\nu}, $$
where
$\varphi_0(r ) = \int_{\mathbb{S}^{2}}\varphi(r\theta)d\sigma(\theta)$, $\psi_0(r ) = \int_{\mathbb{S}^{2}}\psi(r\theta)d\sigma(\theta)$
and $c^i\in\R.$
\el
\bp
Let $(\varphi,\psi) \in Ker(I''_{\nu}(u_{\nu},v_{\nu}))$ which means
\beq\label{20221219-e1}
    \left\{
\begin{array}{ll}
  \aligned
  & -\Delta  \varphi +\varphi=2 \psi u_{\nu} +2 v_{\nu} \varphi+\nu(q-1)u_{\nu}^{q-2}\varphi,\\
  &-\Delta \psi=2 u_{\nu} \varphi.
  \endaligned
\end{array}
\right.
\eeq
For any $(\Psi, \Phi) \in H^1(\R^3)\times H^1(\R^3)$, we have
\beq\label{eq:20221120-7}
    \left\{
\begin{array}{ll}
\aligned
&      \int_{\R^3} \nabla \varphi  \cdot \nabla\Psi dx +\int_{\R^3} \varphi\Psi dx =2\int_{\R^3}(\psi u_{\nu}+v_{\nu}\varphi)\Psi dx+\nu(q-1)\int_{\R^3} u_{\nu}^{q-2}\varphi \Psi dx,\\
&\int_{\R^{N}} \nabla\psi \cdot \nabla\Phi dx=2\int_{\R^3} u_{\nu} \varphi\Phi dx.
\endaligned
\end{array}
\right.
\eeq
 Now we decompose $ \varphi $, $\psi$ in the spherical harmonics and we obtain
\beq\label{eq:20221120-8}
 \varphi (x)=\sum \limits_{k \in \N} \sum \limits_{i=1}^{n_k} f^k_i(r)Y^i_k (\theta),\quad \psi(x)=\sum \limits_{k \in \N} \sum \limits_{i=1}^{n_k} g^k_i(r)Y^i_k (\theta),
\eeq
where $f^k_i \in H^1(\R_{+};r^{2})$, $g^k_i \in H^1(\R_+; r^{2})$, $r=|x|$ and $\theta=\frac{x}{|x|}$.
By testing the first equation in \eqref{eq:20221120-7} against the function $\Psi=h(|x|)Y^i_k$ and using polar coordinates and Proposition \ref{20221223-e1}, we obtain that, for any $h \in H^1(\R_{+}; r^{2})$, any $k \in\mathbb{N}$ and any $i \in \left[1,n_k\right]$,
\begin{equation}\label{20221219-ep1}
\aligned
&A_k((f^k_i,g^k_i), h)_1 \\
:=& \int_{\R_{+}} (f^k_i)_r h_r  r^{2}dr+\lambda_k \int_{\R_{+}} f^k_i h dr + \int_{\R_+} f^k_i hr^{2}dr \\
&-2  \int_{\R_+}u_{\nu} g^k_i  h  r^{2}dr-2  \int_{\R_+} v_{\nu} f^k_i hr^{2}dr-\nu(q-1)\int_{\R_+} u_\nu^{q-2} f^k_i hr^{2}dr\\
=&0.
\endaligned
\end{equation}

By testing the second equation in  \eqref{eq:20221120-7} against the function $\Phi=l(|x|)Y^i_k$ and using polar coordinates and Proposition \ref{20221223-e1}, we obtain that, for any $l\in H^1(\R_+; r^{2})$, any $k \in\mathbb{N}$ and any $i \in \left[1,n_k\right]$,
\begin{equation}\label{20221221-ep1}
\aligned
&A_k((f^k_i,g^k_i),l)_2 \\
:=&\int_{\R_+} (g^k_i)_r l_r r^{2}dr+\lambda_k \int_{\R_+} g^k_i l dr-2 \int_{\R_+} u_{\nu} f^k_i l r^{2}dr\\
=&0.
\endaligned
\end{equation}

Let
$$A_k((f^k_i,g^k_i),(h,l))
:=A_k((f^k_i,g^k_i),h)_1+A_k((f^k_i,g^k_i),l)_2,$$  take $h=f^k_i$ and $l=g^k_i$, we observe that
\begin{equation}\nonumber
\aligned
&A_k((f^k_i,g^k_i),(f^k_i,g^k_i)) \\
=&\int_{\R_{+}} |(f^k_i)_r |^2  r^{2}dr +\lambda_k \int_{\R_{+}} |f^k_i|^2 dr + \int_{\R_+} |f^k_i|^2r^{2}dr \\
&-4  \int_{\R_+}f^k_i g^k_i u_{\nu}   r^{2}dr-2  \int_{\R_+} v_{\nu} |f^k_i|^2r^{2}dr-\nu(q-1)\int_{\R_+} u_\nu^{q-2} |f^k_i|^2r^{2}dr\\
&+\int_{\R_+} |(g^k_i)_r|^2 r^{2}dr+\lambda_k \int_{\R_+} |g^k_i|^2 dr \\
=&A_1((f^k_i,g^k_i),(f^k_i,g^k_i))+(\lambda_k - 2 ) \int_{\R_{+}} |f^k_i|^2  dr +(\lambda_k - 2 ) \int_{\R_{+}} |g^k_i|^2  dr\\
=&0,
\endaligned
\end{equation}
where $A_1$ is defined in \eqref{eq:20221120-4e}.
By Corollary \ref{3.8} ($A_1((f^k_i,g^k_i),(f^k_i,g^k_i))\geq0$) and the fact that the eigenvalue $\lambda_k>2$ for $k \geq 2$, we obtain from the identities above that
\begin{align*}
0&=A_k((f^k_i,g^k_i),(f^k_i,g^k_i)) \notag \\
&\geq(\lambda_k - 2 ) \int_{\R_{+}} |f^k_i|^2     dr +(\lambda_k - 2 ) \int_{\R_{+}}|g^k_i|^2     dr.\notag
\end{align*}
As a consequence, $f^k_i=0$ for every $k \geq 2$. Accordingly, \eqref{eq:20221120-8} becomes
$$ \varphi (x)= \sum \limits_{i=1}^{3} f^1_i(|x|)Y^i_1 (\frac{x}{|x|}),\quad \psi(x)= \sum \limits_{i=1}^{3} g^1_i(|x|)Y^i_1 (\frac{x}{|x|}).$$
Here, by Proposition \ref{20221223-e1},
$$Y^i_1(\frac{x}{|x|})=\frac{x^i}{|x|}=\theta^i.$$
And we have from the orthogonality of $Y^i_1$ in $L^2(\mathbb{S}^2)$ that
$$f^1_i(r)= \int_{\mathbb{S}^{2}}  \varphi (r\theta){\theta}^i d\sigma(\theta),~~g^1_i(r)= \int_{\mathbb{S}^{2}} \psi(r\theta){\theta}^i d\sigma(\theta).$$
To complete the proof we need to characterize $f^1_i$ and $g^1_i$. For this, we notice that, for $i=1,2,3$, $f^1_i(t,0)=0$, $g^1_i(0)=0$,
\begin{equation}\label{eq:20221120-9}
\aligned
&A_1((f^1_i,g^1_i),h)_1 \\
=&\int_{\R_{+}} (f^1_i)_r h_r  r^{2}dr+ 2  \int_{\R_{+}} f^1_i h dr + \int_{\R_+} f^1_i hr^{2}dr \\
&-2  \int_{\R_+}u_{\nu} g^1_i  h  r^{2}dr-2  \int_{\R_+} v_{\nu} f^1_i hr^{2}dr-\nu(q-1)\int_{\R_+} u_\nu^{q-2}f_i^1 h r^2 dr \\
=&0,
\endaligned
\end{equation}
and
\begin{equation}\label{eq:20221120-92}
\aligned
&A_1((f^1_i,g^1_i),l)_2 \\
=&\int_{\R_+} (g^1_i)_r l_r r^{2}dr+ 2 \int_{\R_+} g^1_i l dr-2\int_{\R_+} u_{\nu} f^1_i l r^{2}dr\\
=&0,
\endaligned
\end{equation}
for every $h \in H^1(\R_+;r^{2})$ and $l\in H^1(\R_+; r^{2})$, due to the eigenvalue $\lambda_1=2$ and \eqref{20221219-ep1}-\eqref{20221221-ep1}.

Now we define $\bar{U}(|x|)=u_{\nu}(x)$ and $\bar{V}(|x|)=v_{\nu}(x)$.
Then we have
\beq\nonumber
\left\{
\begin{array}{ll}
	\aligned
	& -\partial_{rr}\bar{U}-\frac{2}{r}\partial_r \bar{U}+ \bar{U}= 2 \bar{U}\bar{V}+\nu\bar{U}^{q-1}~~~\text{on}~\R_+, \\
&-\partial_{rr}\bar{V}-\frac{2}{r}\partial_r \bar{V}=\bar{U}^2~~~\text{on}~\R_+, \\
	&\lim\limits_{r\searrow 0} r^{2} \bar{U}_r=0,\quad \lim\limits_{r\searrow 0} r^{2} \bar{V}_r=0.
	\endaligned
\end{array}
\right.
\eeq
We differentiating the above equation with respect to $r$. We obtain
\beq\label{eq:20221120-10}
\left\{
\begin{array}{ll}
	\aligned
	&-\partial_r\left(\frac{1}{r^{2}}\partial_r(r^{2}\bar{U}_r)\right)+ \bar{U}_r=2  (\bar{U}_r\bar{V}+\bar{U}\bar{V}_r)+\nu(q-1)\bar{U}^{q-2}\bar{U}_r~~~\text{on}~\R_+, \\
&-\partial_r\left(\frac{1}{r^{2}}\partial_r(r^{2}\bar{V}_r)\right)=2\bar{U}\bar{U}_r~~~\text{on}~\R_+, \\
	&\lim\limits_{r\searrow 0} r^{2} \bar{U}_r=0,\quad \lim\limits_{r\searrow 0} r^{2} \bar{V}_r=0.
	\endaligned
\end{array}
\right.
\eeq

By Proposition \ref{p2.2}, $\bar{U}, \bar{V}$ are positive,
radially symmetric and decreasing, we may assume that $\bar{U}_r, \bar{V}_r<0$ on $\R_{+}$.

Given $f \in C^\infty_c (\R_{+})$, by testing the first equation of \eqref{eq:20221120-10} with $\frac{f^2}{\bar{U}_{r}}r^2$,
\beq\label{20221220-ea3}
\aligned
&-\int_{\R_+} f^2r^{2} dr+2\int_{\R_+} (\bar{V}+\bar{U}\bar{V}_r/\bar{U}_{r})f^2r^{2} dr+\nu(q-1)\int_{\R_+}\bar{U}^{q-2}f^2 r^2dr\\
=&-\int_{\R_+}\partial_r\left(\frac{1}{r^{2}}\partial_r(r^{2}\bar{U}_r)\right)\frac{f^2}{\bar{U}_r}r^2dr \\
:=&I
\endaligned
\eeq
Integrating by parts, we get
\beq\nonumber
\aligned
I=& 2 \int_{\R_+}\frac{1}{r^{2}}\partial_r(r^{2}\bar{U}_r) r\frac{f^2}{\bar{U}_r}dr+\int_{\R_+} \frac{1}{r^{2}}\partial_r(r^{2}\bar{U}_r) r^{2} \partial_r \left(\frac{f^2}{\bar{U}_r}\right) dr \\
=&- 2 \int_{\R_+}\left(r \frac{2ff_r\bar{U}_r-f^2\bar{U}_{rr}}{\bar{U}_r}-f^2 \right) dr+\int_{\R_+}\left( 2 r\bar{U}_r+r^{2}\bar{U}_{rr}\right)\frac{2ff_r\bar{U}_r-f^2\bar{U}_{rr}}{\bar{U}_r^2} dr\\
=& 2 \int_{\R_+} f^2      dr-\int_{\R_+} \left[\left(\frac{\bar{U}_{rr}}{\bar{U}_r}f\right)^2-2\frac{\bar{U}_{rr}}{\bar{U}_r}ff_r\right] r^{2} dr .
\endaligned
\eeq
Then by \eqref{20221220-ea3}, we get
\begin{equation}\label{202212201813}
\aligned
&\int_{\R_{+}}f_r^2  r^{2}dr+ 2 \int_{\R_{+}}f^2 dr+\int_{\R_+}f^2 r^{2}dr \\
&-2\int_{\R_+}(\bar{V}+\bar{U}\bar{V}_r/\bar{U}_{r}) f^2 r^{2}dr-\nu(q-1)\int_{\R_+}\bar{U}^{q-2}f^2 r^2dr  \\
=&\int_{\R_{+}}f_r^2  r^{2}dr+\int_{\R_+} \left[\left(\frac{\bar{U}_{rr}}{\bar{U}_r}f\right)^2-2\frac{\bar{U}_{rr}}{\bar{U}_r}ff_r\right] r^{2} dr .
\endaligned
\end{equation}
Note from \eqref{eq:20221120-9} that
\begin{equation}\label{202212201812}
\aligned
&A_1((f,g),f)_1\\
=& \int_{\R_{+}} f^2_r   r^{2}dr+ 2  \int_{\R_{+}} f^2 dr + \int_{\R_+} f^2 r^{2}dr \\
&-2  \int_{\R_+} fg \bar{U}  r^{2}dr-2  \int_{\R_+} \bar{V} f^2 r^{2}dr-\nu(q-1)\int_{\R_+}\bar{U}^{q-2}f^2 r^2 dr.
\endaligned
\end{equation}
By \eqref{202212201813} and \eqref{202212201812}, we get
\begin{equation}\label{202212201814}
\aligned
&A_1((f,g),f)_1 \\
= &\int_{\R_{+}}\left[f_r^2  r^{2}+ \left(\frac{\bar{U}_{rr}}{\bar{U}_r}f\right)^2-2\frac{\bar{U}_{rr}}{\bar{U}_r}ff_r r^{2}\right] dr \\
&-2  \int_{\R_+} fg \bar{U}  r^{2}dr + 2\int_{\R_+}\frac{\bar{U}\bar{V}_r}{\bar{U}_{r}} f^2 r^{2}dr\\
=&\int_{\R_{+}}|\bar{U}_{r}\nabla (f/ \bar{U}_r)|^2  r^{2}dr-2  \int_{\R_+} fg \bar{U}  r^{2}dr + 2\int_{\R_+}\frac{\bar{U}\bar{V}_r}{\bar{U}_{r}} f^2 r^{2}dr.
\endaligned
\end{equation}

Given $g \in C^\infty_c (\R^{+})$,
by testing the third equation of \eqref{eq:20221120-10} with $\frac{g^2}{\bar{V}_{r}}r^{2}$, we have
\beq\nonumber
\aligned
2\int_{\R_+}\bar{U}\frac{\bar{U}_r}{\bar{V}_r} g^2 r^{2} dr
=-\int_{\R_+}\partial_r\left(\frac{1}{r^{2}}\partial_r(r^{2}\bar{V}_r)\right)r^{2}\frac{g^2}{\bar{V}_r}dr
:=II.
\endaligned
\eeq
Integrating by parts, similar to I, we get
\beq\nonumber
\aligned
II= 2 \int_{\R_+} g^2      dr-\int_{\R_+} \left[\left(\frac{\bar{V}_{rr}}{\bar{V}_r}g\right)^2-2\frac{\bar{V}_{rr}}{\bar{V}_r}gg_r\right] r^{2} dr .
\endaligned
\eeq
\allowbreak
Therefore,
\beq\label{202212201803}
\aligned
2\int_{\R_+}\bar{U}\frac{\bar{U}_r}{\bar{V}_r} g^2 r^{2} dr
= 2 \int_{\R_+} g^2      dr-\int_{\R_+} \left[\left(\frac{\bar{V}_{rr}}{\bar{V}_r}g\right)^2-2\frac{\bar{V}_{rr}}{\bar{V}_r}gg_r\right] r^{2} dr .
\endaligned
\eeq

Note from \eqref{eq:20221120-92} that
\begin{equation}\label{202212201804}
\aligned
A_1((f,g),g)_2=\int_{\R_+} g_r^2 r^{2}dr+ 2 \int_{\R_+} g^2     dr-2 \int_{\R_+} \bar{U} fg r^{2}dr.
\endaligned
\end{equation}
By \eqref{202212201803} and \eqref{202212201804}, we get
\begin{equation}\label{202212201805}
\aligned
&A_1((f,g), g)_2
=2\int_{\R_+}\bar{U}\frac{\bar{U}_r}{\bar{V}_r} g^2 r^{2} dr-2\int_{\R_+} \bar{U} fg r^{2}dr \\
&+\int_{\R_+} \left[g_r^2+\left(\frac{\bar{V}_{rr}}{\bar{V}_r}g\right)^2-2\frac{\bar{V}_{rr}}{\bar{V}_r}gg_r\right] r^{2} dr.
\endaligned
\end{equation}
Combining \eqref{202212201805} and \eqref{202212201814}, we get
\begin{equation}\label{202212201827}
\aligned
&A_1((f,g),(f,g))=A_1((f,g),f)_1+A_1((f,g), g)_2 \\
=&\int_{\R_{+}}|\bar{U}_{r}\nabla (f/ \bar{U}_r)|^2  r^{2}dr +2  \int_{\R_+}\left(\frac{\bar{V}_r}{\bar{U}_r}f^2+\frac{\bar{U}_r}{\bar{V}_r}g^2 -2fg  \right) \bar{U} r^{2}dr\\
&+\int_{\R_+} \left[g_r^2+\left(\frac{\bar{V}_{rr}}{\bar{V}_r}g\right)^2-2\frac{\bar{V}_{rr}}{\bar{V}_r}gg_r\right] r^{2} dr .
\endaligned
\end{equation}
Therefore,
we obtain
\begin{equation}\label{202212201828}
\aligned
&A_1((f,g),(f,g)) \geq \int_{\R_{+}}|\bar{U}_{r}\nabla (f/ \bar{U}_r)|^2  r^{2}dr \\
+&2  \int_{\R_+}\left(\sqrt{\frac{\bar{V}_r}{\bar{U}_r}}f-\sqrt{\frac{\bar{U}_r}{\bar{V}_r}}g\right)^2   \bar{U} r^{2}dr.
\endaligned
\end{equation}
In particular, by density we have that, for every $i=1,2,3$,
\begin{equation}\nonumber
\aligned
0=&A_1((f^1_i,g^1_i),(f^1_i,g^1_i))\geq \int_{\R_{+}} \left|\bar{U}_{r}\nabla\left(\frac{f^1_i}{ \bar{U}_{r}}\right)\right|^2  r^{2}dr
\\
&+2  \int_{\R_+}\left(\sqrt{\frac{\bar{V}_r}{\bar{U}_r}}f^1_i-\sqrt{\frac{\bar{U}_r}{\bar{V}_r}}g^1_i\right)^2   \bar{U} r^{2}dr.
\endaligned
\end{equation}
This implies that the last two terms vanish and therefore
$$\frac{f^1_i}{\bar{U}_{r}}=\frac{g^1_i}{\bar{V}_{r}} \equiv c^i$$
for some constant $c^i \in \R$. We then conclude that
$$f^1_i(|x|)=c^i \partial_r\bar{U}(|x|),\quad g^1_i(|x|)=c^i \partial_r\bar{V}(|x|) \quad\forall x \in \R^3.$$
Thus, we have proved that for any $(\varphi,\psi) \in Ker(I''_{\nu}(u_{\nu},v_{\nu}))$
$$ \varphi (x)=\varphi(x)=f^0_1(|x|)+\sum \limits_{i=1}^{3} f^1_i(|x|) \frac{x^i}{|x|}=f^0_1(|x|)+\sum \limits_{i=1}^{3} c^i \partial_i u_{\nu}(x),$$
and
$$\psi(x)=g^0_1(|x|)+\sum \limits_{i=1}^{3} g^1_i(|x|) \frac{x^i}{|x|}=g^0_1(|x|)+\sum \limits_{i=1}^{3} c^i \partial_i v_{\nu}(x),$$
as desired.
\ep

Now, define the energy functional $I_{\mu}: H^1(\R^3)\times \dot{H}^1(\R^3) \mapsto \R$ for \eqref{eq:20230415-e12} as
\beq\nonumber
I_{\mu}(u,v):=\frac{1}{2}\|u\|_{H^1}^2+\frac{1}{2}\|v\|_{\dot{H}^1}^2-\mu\int_{\R^3}u^2 v dx-\frac{1}{q}\int_{\R^3}u^q dx.
\eeq
\bl\label{3.19} Let $(\varphi, \psi)\in \text{Ker}( I''_{\mu} (u_{\mu}, v_{\mu} ))$. Then
$$\varphi = \varphi_0(|x|) +\sum_{i=1}^3 c^i\partial_i u_{\mu},\quad
\psi = \psi_0(|x|) +\sum_{i=1}^3 c^i\partial_i v_{\mu}, $$
where
$\varphi_0(r ) = \int_{\mathbb{S}^{2}}\varphi(r\theta)d\sigma(\theta)$, $\psi_0(r ) = \int_{\mathbb{S}^{2}}\psi(r\theta)d\sigma(\theta)$
and $c^i\in\R.$
\el
\bp
The proof is similar to Lemma \ref{3.9}, we omit it.
\ep
Now we are ready to prove our nondegeneracy result for $\nu$ (resp. $\mu$) close to $0^+$.

{\subsection{  Completion of the proof of Theorem \ref{th1.3}. }
 Let $(w_{\nu}, \vartheta_{\nu})  \in Ker(I''_{\nu}(u_{\nu}, v_{\nu} ))$ and $(w_{\mu}, \vartheta_{\mu})  \in Ker(I''_{\mu}(u_{\mu}, v_{\mu} ))$ be radial
functions. To proof Theorem \ref{th1.3}, according to Lemma \ref{3.9} and Lemma \ref{3.19}, it is suffice to prove the following Claim.
\\
Claim 1: If $\nu$ is close to $0^+$, we have $w_{\nu} =\vartheta_{\nu} \equiv 0$;\\
Claim 2: If $\mu$ is close to $0^+$, we have $w_{\mu} =\vartheta_{\mu} \equiv 0$.

Indeed, if we obtain Claim 1, then Theorem \ref{th1.3} holds in cases (ii) and (iv):
\begin{itemize}
\item[(ii)] $3<q<6$, $\lambda$ close to $0$;
\item[(iv)] $2<q<3$, $\lambda$ close to $+\infty$;
\end{itemize}
if we obtain Claim 2, then Theorem \ref{th1.3} holds in cases (i) and (iii):
\begin{itemize}
\item[(i)] $2<q<3$, $\lambda$ close to $0$;
\item[(iii)] $3<q<6$, $\lambda$ close to $+\infty$.
\end{itemize}
We only prove Claim 1, since the proof of Claim 2 is the same with Claim 1.

Recall that $\nu=\lambda^{q-3}$, and by Theorem \ref{th:20220608} and Theorem \ref{th5.4},
 $$u_{\nu}\to U~~\text{and}~~ v_{\nu}\to V,\quad\text{in}~~H^1(\R^3),\quad \text{as}~\nu\to0^+.$$

Assume by contradiction that there exists a sequence $\nu_n$ still denoted by $\nu$ with $\nu\to 0^+$ and such that $(w_{\nu}, \vartheta_{\nu})\neq (0,0)$. Up to normalization, we can assume that $\|w_{\nu}\|^2_{H^1}=\|\vartheta_{\nu}\|^2_{H^1}=1$, and up to a subsequence,
$$w_{\nu} \rightharpoonup w~~\text{and}~~\vartheta_{\nu} \rightharpoonup \vartheta,\quad\text{in}~~H^1(\R^3),\quad \text{as}~\nu\to0^+.$$
Then by the uniform decaying property of $u_{\nu}$, for any $\varphi, \phi \in C^\infty_c (\R^3)$ we have
\beq\nonumber
\aligned
 &\int_{\R^3} u_{\nu}\vartheta_{\nu}\varphi dx \to \int_{\R^3} U\vartheta\varphi dx,\quad
 \int_{\R^3} w_{\nu}v_{\nu}\varphi dx\to\int_{\R^3}  w V\varphi dx, \\
 & \nu(q-1)\int_{\R^3}u_{\nu}^{q-2}w_{\nu}\varphi dx\to0, \quad\int_{\R^3} w_{\nu} u_{\nu}\phi dx\to\int_{\R^3}  w U\phi dx,
\endaligned
\eeq
as $\nu\to0^+$.
 Next we observe that $(w_{\nu},\vartheta_{\nu})$ is a solution of the linearized equation and therefore for any $\varphi, \phi \in C^\infty_c (\R^3)$
\beq\nonumber
  \left\{
\begin{array}{ll}
\aligned
&-\int_{\R^3} w_{\nu} \Delta  \varphi dx+\int_{\R^3} w_{\nu} \varphi dx=2\int_{\R^3}w_{\nu}v_{\nu}\varphi dx+2\int_{\R^3}u_{\nu}\vartheta_{\nu} \varphi dx+\nu(q-1)\int_{\R^3}u_{\nu}^{q-2}w_{\nu}\varphi dx, \\
&-\int_{\R^3} \vartheta_{\nu} \Delta \phi dx=2\int_{\R^3}w_{\nu} u_{\nu} \phi dx,
\endaligned
\end{array}
\right.
\eeq
 we infer that
\beq\nonumber
  \left\{
\begin{array}{ll}
\aligned
&-\int_{\R^3} w \Delta \varphi dx+\int_{\R^3} w\varphi dx=2\int_{\R^3}w V\varphi dx+2\int_{\R^3}U \vartheta \varphi dx, \\
&-\int_{\R^3} \vartheta \Delta \phi dx=2\int_{\R^3}w U \phi dx.
\endaligned
\end{array}
\right.
\eeq
We then conclude that $(w,\vartheta)$ is radial, nontrivial and belongs to $Ker(\mathcal{L}_+)$. This is clearly a contradiction and the claim is proved.

\vskip4mm

\end{document}